\newcommand{\suchthat}{\;\ifnum\currentgrouptype=16 \middle\fi|\;}
\renewcommand{\leq}{\leqslant}
\renewcommand{\geq}{\geqslant}
\renewcommand{\div}{\mbox{div}\,}
\newcommand{\curl}{{\mbox{curl}\,}}
\newcommand{\ds}{\displaystyle}
\numberwithin{equation}{section}
\newtheorem{thm}{Theorem}
\numberwithin{thm}{section}
\newaliascnt{lemma}{thm}
\newtheorem{lem}[lemma]{Lemma}
\newaliascnt{proposition}{thm}
\newaliascnt{corollary}{thm}
\newtheorem{corollary}[corollary]{Corollary}
\newaliascnt{definition}{thm}
\newaliascnt{remark}{thm}
\newtheorem{remark}[remark]{Remark}
\begin{document}
		\title[Observability of the adjoint of a compressible fluid-structure model]{Observability and unique continuation of the adjoint of a linearized compressible fluid-structure model in a 2d channel}
		
		\date{\today}
		\author{Sourav Mitra}
		\thanks{{Acknowledgments}: The work was partially done  when the author was a member of  Institut de Math\'ematiques de Toulouse. The author wishes to thank the ANR project ANR-15-CE40-0010 IFSMACS as well as the Indo-French Centre for Applied Mathematics (IFCAM) for the funding provided during that period. The author is presently a member of Institute of Mathematics, University of Würzburg where the present article is finalized. } 
		\address{Sourav Mitra, Institute of Mathematics, University of Würzburg, 97074, Germany}
		\email{sourav.mitra@mathematik.uni-wuerzburg.de,\,Tél: +49 931 31-89531,\,Fax: +49 931 31-80944}
		
		\begin{abstract}
			Our objective is to study the observability and unique continuation property of the adjoint of a linearized compressible fluid structure interaction model in a 2D channel. Concerning the structure we will consider a damped Euler-Bernoulli beam located on a portion of the boundary. In the present article we establish an observability inequality for the adjoint of the linearized fluid structure interaction problem under consideration which in principle is equivalent with the null controllability of the linearized system. As a corollary of the derived observability inequality we also obtain a unique continuation property for the adjoint problem.
		\end{abstract}
		
		\maketitle
		\noindent{\bf{Key words}.} Observability, unique continuation, adjoint, compressible Navier-Stokes, damped beam, fluid-structure, Carleman estimate. 
		\smallskip\\
		\noindent{\bf{AMS subject classifications}.}
		 76N25, 76N10, 93B05, 93B07, 93B18.
		\section{Introduction}
		
		This article deals with the observability and unique continuation properties of the adjoint of a linearized compressible fluid structure interaction problem. In order to introduce our model in a fixed domain it is first important to present the non linear fluid structure interaction dynamics and obtain the linear model via a suitable linearization procedure. We remark that this linearization process is not unique and depend on the structure of the map which we will use to bring the time dependent domain to a fixed reference configuration.
		
		\subsection{Motivation}\label{motivationchap4thesis}
		In this section, we introduce the full non-linear compressible fluid structure interaction model which we aim at studying from the controllability point of view, even though our work is only a preliminary work in this direction. 
		\\
		Our goal here is to explain how, starting from a control problem for a compressible fluid-structure interaction model, we derive a linear model (cf. Section \ref{linear}) which should, in principle, contain some of the main difficulties related to the non-linear model. 
		%
		\subsubsection{The non-linear model}\label{thenonlinearmodel}
		We first define a few notations corresponding to the fluid and the structural domain. Let $d>0$ be a constant and $\Omega=(0,d)\times(0,1).$ We set 
		\begin{equation}\nonumber
		\begin{array}{l}
		\Gamma_{s}=(0,d)\times \{1\},
		\quad
		\Gamma_{\ell}=(0,d)\times\{0\},\quad\Gamma=\Gamma_{s}\cup\Gamma_{\ell}.
		\end{array}
		\end{equation}     
		For a given function $$\beta: \Gamma_{s}\times (0,\infty)\rightarrow (-1,\infty),$$ which will correspond to the displacement of the one dimensional beam, let us denote by $\Omega_{t}$ and $\Gamma_{s,t}$ the following sets
		\begin{equation}\nonumber
		\begin{array}{ll}
		\Omega_{t}=\{(x,y) \suchthat x\in (0,L),\quad 0<y<1+\beta(x,t)\}&=\,\mbox{domain of the fluid at time $t$},
		\vspace{1.mm}\\
		\Gamma_{s,t}=\{(x,y)\suchthat x\in (0,L),\quad y=1+\beta(x,t)\}&=\,\mbox{the beam at time $t$}.
		\end{array}
		\end{equation}    
		The reference configuration of the beam is $\Gamma_{s}$ and we set
		\begin{equation}\label{{setnotchp3}}
		\begin{array}{ll}
		\Sigma_{T}=\Gamma\times (0,T),&\quad\Sigma^{s}_{T}=\Gamma_{s}\times (0,T),\\ \widetilde{\Sigma^{s}_{T}}=\cup_{t\in (0,T)}\Gamma_{s,t}\times \{t\},&\quad\Sigma^{\ell}_{T}=\Gamma_{\ell}\times (0,T),\\  Q_{T}=\Omega\times (0,T),&\quad \widetilde{Q_{T}}=\cup_{t\in(0,T)} \Omega_{t}\times \{t\}.
		\end{array}
		\end{equation}  
		\begin{figure}[h!]
			\centering
			\begin{tikzpicture}[scale=0.75]
			\coordinate (O) at (0,5);
			\coordinate (A) at (10,5);
			\coordinate (B) at (6.5,5);
			\draw (5,0)node [below] {$\Gamma_{\ell}$};
			\draw (0,0)node [below] {$0$};
			\draw (10,0)node [below right] {$L$};
			\draw (0,5)node [above] {$1$};
			\draw (3.2,5.2) node [above right] {$\beta(x,t)$};
			\draw (5,5) node [below] {$\Gamma_{s}$};
			\draw (0,0) -- (10,0);
			\draw (10,0) -- (10,5);
			\draw[dashed] (10,5) -- (0,5);
			\draw (0,5) -- (0,0);
			\draw[color=blue, ultra thick] (O) to [bend left=30] (B);
			\draw[color=blue, ultra thick] (B)  to [bend right=30] (A);
			\draw[<->, color=red, thick] (3.2,5) -- (3.2,6);
			\end{tikzpicture}
			\caption{Domain $\Omega_{t}$.}
		\end{figure} \\ 
		We consider a fluid with density $\rho$ and velocity ${u}.$ The fluid structure interaction system coupling the compressible Navier-Stokes and the damped Euler-Bernoulli beam equation is modeled by the following equations
		\begin{equation}\label{1.1chp3}
		\left\{
		\begin{array}{ll}
		\partial_{t}\rho+\mbox{div}(\rho {u})=0\quad &\mbox{in} \quad \widetilde{Q_{T}},
		\vspace{1.mm}\\
		\rho ( \partial_{t}{ u}+ ({u}.\nabla){u})-\mu\Delta u-(\mu+\mu')\nabla\mbox{div}u +\nabla p(\rho) =0\quad &\mbox{in} \quad \widetilde{Q_T},
		\vspace{1.mm}\\
		\partial_{tt}\beta-\partial_{txx}\beta+\partial_{xxxx}\beta=(T_{f})_{2} \quad& \mbox{on}\quad \Sigma^{s}_{T}.
		\end{array} \right.
		\end{equation}
		We assume that at the fluid structure interface the following impermeability condition holds
		\begin{equation}\label{interface}
		\begin{array}{l}
		{ u}(\cdot,t)\cdot n_{t}=(0,\partial_{t}\beta)\cdot n_{t}\quad \mbox{on}\quad \widetilde{\Sigma^{s}_{T}},
		\end{array}
		\end{equation}
		where ${n}_{t}$ is the outward unit normal to $\Gamma_{s,t}$ given by
		$$
		{ n}_{t}=-\frac{\partial_{x}\beta}{\sqrt{1+(\partial_{x}\beta)^{2}}}\vec{e}_{1}+\frac{1}{\sqrt{1+(\partial_{x}\beta)^{2}}}\vec{e}_{2}, 
		\qquad (\vec{e}_{1}=(1,0) \text{ and }\vec{e}_{2}=(0,1)).
		$$
		The fixed boundary $\Sigma^{\ell}_{T}$ is assumed to be impermeable and here the impermeability condition is given as follows
		\begin{equation}\label{u.n0}
		\begin{array}{l}
		u(\cdot,t)\cdot n=0\quad \mbox{on}\quad \Sigma^{\ell}_{T},
		\end{array}
		\end{equation}
		where $n$ is the unit outward normal to $\Gamma_{\ell}.$ The fluid boundary is supplemented with the following slip condition
		\begin{equation}\label{curl0}
		\begin{array}{l}
		\mbox{curl}({u})=0\quad \mbox{on}\quad \widetilde{\Sigma^{s}_{T}}\cup \Sigma^{l}_{T},
		\end{array}
		\end{equation}
		where $\mbox{curl}u=(\frac{\partial u_{1}}{\partial y}-\frac{\partial u_{2}}{\partial x}),$ denotes the vorticity of the vector field $u.$ In the system \eqref{1.1chp3}, the real constants $\mu,$ $\mu'$ are the Lam\'{e} coefficients which are supposed to satisfy
		$$\mu>0,\quad (\mu'+2\mu)>0.$$ 
		In our case the fluid is isentropic i.e. the pressure $p(\rho)$ is only a function of the fluid density $\rho$ and is given by
		$$p(\rho)=a\rho^{\gamma},$$
		where $a>0$ and $\gamma>1$ are positive constants.\\ 
		We assume that there exists a constant external force $P_{ext}>0$ which acts on the beam. We then introduce the positive constant $\overline{\rho}$ defined by the relation 
		$$ P_{ext}= a\overline{\rho}^{\gamma}.$$
		To incorporate this external forcing term $P_{ext}$ into the system of equations \eqref{1.1chp3}, we introduce the following 
		\begin{equation}\label{1.2chp3}
		P(\rho)=p(\rho)-P_{ext}=a\rho^{\gamma}-a\overline{\rho}^{\gamma}.
		\end{equation}
		The non-homogeneous source term of the beam equation $(T_{f})_{2}$ is the net surface force on the structure which is the resultant of force exerted by the fluid on the structure and the external force $P_{ext}$ and it is assumed to be of the following simplified form
		\begin{equation}\label{1.3chp3}
		(T_{f})_{2}=(-(\mu{'}+2\mu)(\mbox{div}{u}){I}_{d}\cdot {{n}_{t}}+P{{ n}_{t}})\mid_{\Gamma_{s,t}}\sqrt{1+(\partial_{x}\beta)^{2}}\cdot \vec{e}_{2}\quad\mbox{on}\quad \Sigma^{s}_{T},
		\end{equation}
		where ${I}_{d}$ is the identity matrix.
		
		\begin{remark}[The physical model and simplification.] 
			\label{Remark-Force-Beam}
			The stress tensor corresponding to a Newtonian fluid with velocity $u$ and pressure $p$ is of the following form:
			\begin{equation}\label{stress}
			\begin{split}
			\mathbb{S}
			(u,p)=(2\mu D(u)+\mu'{\mathrm{div}}\,uI_{d})-pI_{d},
			\end{split}
			\end{equation}
			where $D(u)$ is the symmetric gradient given by 
			$$D(u)=\frac{1}{2}(\nabla u+\nabla^{T}u).$$
			In view of the expression \eqref{stress} of the stress tensor, the net force acting on the beam should be given as follows:
			\begin{equation}\label{stressor}
			(T_{f})^{ph}_{2}=([-2\mu D({u})-\mu{'}{\div}{u}{I}_{d}]\cdot {{n}_{t}}+P{{ n}_{t}})\mid_{\Gamma_{s,t}}\sqrt{1+(\partial_{x}\beta)^{2}}\cdot \vec{e}_{2}\quad\mbox{on}\quad \Sigma^{s}_{T}.
			\end{equation}
			Instead of using the force \eqref{stressor}, we assume, for technical reasons (see Remark \ref{rem-q-good-unknow}), that the net force acting on the beam is given by \eqref{1.3chp3}. 
			\\
			Although this might seem physically irrelevant, let us point out that the resulting simplified model \eqref{1.1chp3}-\eqref{interface}-\eqref{u.n0}-\eqref{curl0}-\eqref{1.2chp3}-\eqref{1.3chp3} admits an energy equality which is explained in the following.
			\\
			Assuming the data and the unknowns $\rho,$ $u$ and $\beta$ are periodic in the $x$ direction, we can formally derive the following energy dissipation law for the system \eqref{1.1chp3}-\eqref{interface}-\eqref{u.n0}-\eqref{curl0}-\eqref{1.2chp3}-\eqref{1.3chp3} (the detailed computation can be found in Section \cite[p. 211, Appendix]{phdthesis})
			\begin{equation}\label{1.15lchp3}
			\begin{split}
			&\frac{1}{2}\frac{d}{dt}\left(\int\limits_{\Omega_{\beta(t)}}\rho|{ u}|^{2}\,dx\right)+\frac{d}{dt}\left(\int\limits_{\Omega_{\beta(t)}}\frac{a}{(\gamma-1)}\rho^{\gamma}\,dx \right) +\frac{1}{2}\frac{d}{dt}\left(\int\limits_{0}^{L}|\partial_{t}\beta|^{2}\,dx\right)\\
			&+\frac{1}{2}\frac{d}{dt}\left(\int\limits_{0}^{L}|\partial_{xx}\beta|^{2}\,dx\right)
			+\mu\int\limits_{\Omega_{\beta(t)}}| \mathrm{curl}{ u}|^{2}\,dx+(\mu'+2\mu)\int\limits_{\Omega_{\beta(t)}}|\mathrm{div} u|^{2}\,dx\\
			&+\int\limits_{0}^{L}|\partial_{tx}\beta|^{2}\,dx =-P_{ext}\int\limits_{\Gamma_{s}}\partial_{t}\beta.
			\end{split}
			\end{equation}
			Let us also point out that a similar simplified expression of stress tensor was considered in \cite{flori1chp3} and \cite{flori2chp3}.
			We would like to refer the readers to the Remark \ref{extensionremfrcbm} for the technical details behind considering the simplified model \eqref{1.1chp3}-\eqref{interface}-\eqref{u.n0}-\eqref{curl0}-\eqref{1.2chp3}-\eqref{1.3chp3}. 
		\end{remark} 
		\subsubsection{Control problem and extension arguments}\label{Controlproandexten}
		Our goal will be to discuss a control problem with controls acting from the boundary in the $x$-variable. 
		%
		%
		So far, we did not make precise the boundary conditions in the $x$-variable, as the controls we shall consider will precisely act on these boundaries. But in fact, the boundary control functions will never appear explicitly, as we will first do an extension argument in the direction of the channel and then study the distributed controllability for \eqref{1.1chp3}-\eqref{interface}-\eqref{u.n0}-\eqref{curl0}-\eqref{1.2chp3}-\eqref{1.3chp3} in the extended domain with controls localized in the extension of the domain. We thus take $L >0$ and embed $\Gamma_{s}$ into $\mathbb{T}_{L}\times\{1\}$ and $\Omega$ into $\mathbb{T}_{L}\times(0,1)$ where $\mathbb{T}_{L}$ is the one dimensional torus identified with $(-L,d+L)$ with periodic conditions.
		%
		%
		Then we consider the controls $v_{\rho}\chi_{\widetilde{\omega}_t}$ (for the density), $v_{u}\chi_{\widetilde{\omega}_t}$ (for the velocity) and $v_{\beta}\chi_{\widetilde{\omega}_{1,t}}$ (for the beam), where $\chi_{\widetilde\omega_t}$ and $\chi_{\widetilde{\omega}_{1,t}}$ are the characteristics functions of the sets $\widetilde\omega_t$ and $\widetilde{{\omega}}_{1,t}$ which are defined as follows
		\begin{equation}\label{tomom1}
		\begin{array}{l}
		\widetilde{\omega}_t=\{(x,y)\suchthat x\in[-L,0),\,\, 0<y<1+\beta(x,t)\}
		\cup \{(x,y)\suchthat x\in(d,d+L],\\
		\qquad\qquad\qquad\qquad\qquad\qquad\qquad\qquad\qquad\qquad\qquad\qquad 0<y<1+\beta(x,t)\},\\ 
		\widetilde{\omega}_{1,t}=\{(x,y)\suchthat x\in[-L,0),\,\,  y=1+\beta(x,t)\}
		\cup \{(x,y)\suchthat x\in(d,d+L],\\
		\qquad\qquad\qquad\qquad\qquad\qquad\qquad\qquad\qquad\qquad\qquad\qquad\qquad y=1+\beta(x,t)\}.
		\end{array}
		\end{equation}
		To write the control system we further introduce the following notations
		\begin{equation}\nonumber
		\begin{array}{ll}
		\Omega^{ex}_{t}=\{(x,y) \suchthat x\in \mathbb{T}_{L},\quad 0<y<1+\beta(x,t)\}&=\,\mbox{extended domain of the fluid at time $t$},
		\vspace{1.mm}\\
		\Gamma^{ex}_{s,t}=\{(x,y)\suchthat x\in \mathbb{T}_{L},\quad y=1+\beta(x,t)\}&=\,\mbox{the extended beam at time $t$}.
		\end{array}
		\end{equation} 
		Our control system then reads as follows
		\begin{equation}\label{1.1*}
		\left\{
		\begin{array}{ll}
		\displaystyle\partial_{t}\rho+\mbox{div}(\rho {u})=v_{\rho}\chi_{\widetilde\omega_t}\, &\mbox{in} \, \cup_{t\in(0,T)}\Omega^{ex}_{t}\times\{t\},
		\vspace{1.mm}\\
		\displaystyle \rho( \partial_{t}{ u}+({u}.\nabla){u})-\mu\Delta u-(\mu+\mu')\nabla\mbox{div}u +\nabla P(\rho) =v_{u}\chi_{\widetilde\omega_t}\, &\mbox{in} \, \cup_{t\in(0,T)}\Omega^{ex}_{t}\times\{t\},
		\vspace{1.mm}\\
		\displaystyle{u}_{2} =\partial_{t}\beta+\partial_{x}\beta{u}_{1}\, & \mbox{on}\, \cup_{t\in(0,T)}\Gamma^{ex}_{s,t}\times\{t\},
		\vspace{1.mm}\\
		\displaystyle u_{2}=0\, &\mbox{on}\, (\mathbb{T}_{L}\times\{0\})\times(0,T),
		\vspace{1.mm}\\
		\displaystyle \mbox{curl}({u})=0\, &\mbox{on}\, \cup_{t\in(0,T)}\Gamma^{ex}_{s,t}\times\{t\},
		\vspace{1.mm}\\
		\displaystyle \mbox{curl}({u})=0\, &\mbox{on}\,(\mathbb{T}_{L}\times\{0\})\times(0,T) ,
		\vspace{1.mm}\\
		\displaystyle{ u}(\cdot,0)={ u}_{0}\, & \mbox{in} \, \mathbb{T}_{L}\times(0,1),
		\vspace{1.mm}\\
		\displaystyle\rho(\cdot,0)=\rho_{0}\, &\mbox{in}\, \mathbb{T}_{L}\times(0,1),
		\vspace{1.mm}\\
		\displaystyle\partial_{tt}\beta-\partial_{txx}\beta+\partial_{xxxx}\beta=(T_{f})_{2}+v_{\beta}\chi_{\widetilde\omega_{1,t}} \,& \mbox{on}\, (\mathbb{T}_{L}\times\{1\})\times(0,T),
		\vspace{1.mm}\\
		\displaystyle\beta(\cdot,0)=\beta_{0}\quad \mbox{and}\quad \partial_{t}\beta(\cdot,0)=\beta_{1}\, &\mbox{in}\, \mathbb{T}_{L}\times\{1\}.
		\end{array} \right.
		\end{equation} 
		It is standard to deduce a boundary controllability result for the system \eqref{1.1chp3}-\eqref{interface}-\eqref{u.n0}-\eqref{curl0}-\eqref{1.2chp3}-\eqref{1.3chp3} from a controllability result of the system \eqref{1.1*} by restricting the data at the boundaries in the $x$-variable.
		\subsubsection{Transformation of the problem to a fixed domain}
		\label{Subsubsec-fixed}
		To transform the system \eqref{1.1*} in the reference configuration, for $\beta$ satisfying
		$
		1+\beta(x,t)>0$ for all $(x,t)\in \mathbb{T}_{L}\times(0,T),
		$
		we introduce the following changes of variables
		\begin{equation}\label{1.14chp3}
		\begin{array}{l}
		\displaystyle
		{\Phi}_{\beta(t)}:\Omega^{ex}_{t}\longrightarrow \mathbb{T}_{L}\times(0,1)\quad\mbox{defined by}\\
		\qquad\qquad\qquad\qquad\qquad\qquad\displaystyle {\Phi}_{\beta(t)}(x,y)=(x,z)=\left(x,\frac{y}{1+\beta(x,t)}\right),\\
		\displaystyle
		{\Phi}_{\beta}:\cup_{t\in(0,T)}\Omega^{ex}_{t}\times\{t\}\longrightarrow (\mathbb{T}_{L}\times(0,1))\times(0,T)\quad\mbox{defined by}\\
		\qquad\qquad\qquad\qquad\qquad\qquad\displaystyle{\Phi}_{\beta}(x,y,t)=(x,z,t)=\left(x,\frac{y}{1+\beta(x,t)},t\right).
		\end{array} 
		\end{equation}
		\begin{remark}
			It is easy to prove that for each $t\in[0,T),$ the map ${\Phi}_{\beta(t)}$ is a $C^{1}-$ diffeomorphism from $\Omega^{ex}_{t}$ onto $\mathbb{T}_{L}\times(0,1)$ provided that $(1+\beta(x,t))>0,$ for all $x\in\mathbb{T}_{L}$ and that $\beta(\cdot,t)\in C^{1}(\Gamma_{s}).$
		\end{remark}
		Observe that the map ${\Phi}_{\beta(t)}$ can be uniquely extended to the boundary $\Gamma^{ex}_{s,t}$ with values in $\mathbb{T}_{L}\times\{1\},$ by using the same formula \eqref{1.14chp3}$_{1}.$ With the change of variable $\Phi_{\beta(t)}$ (introduced in \eqref{1.14chp3}), the control zones in the reference configuration are written as follows
		\begin{equation}\label{conznf}
		\begin{array}{l}
		\omega=((-L,0)\times(0,1))\cup((d,d+L]\times(0,1))=\{\Phi_{\beta(t)}(x,y)\suchthat (x,y)\in \widetilde{\omega}_t\},\\
		\omega_{1}=((-L,0)\times\{1\})\cup((d,d+L]\times\{1\})=\{\Phi_{\beta(t)}(x,y)\suchthat (x,y)\in \widetilde{\omega}_{1,t}\}.
		\end{array}
		\end{equation} 
		We set the following notations
		\begin{equation}\label{notcv}
		\begin{array}{l}
		\widehat{\rho}(x,z,t)=\rho(\Phi^{-1}_{\beta}(x,z,t)),\,\,\widehat{{ u}}(x,z,t)=(\widehat{u}_{1},\widehat{u}_{2})={u}(\Phi^{-1}_{\beta}(x,z,t)),\\
		\qquad\qquad\qquad\qquad\qquad\qquad\qquad\qquad\qquad\forall  (x,z,t)\in (\mathbb{T}_{L}\times(0,1))\times(0,T),\\
		\widehat{\rho}_{0}(x,z)=\rho_{0}(\Phi^{-1}_{\beta(0)}(x,z)),\,\,\widehat{u}_{0}(x,z)=u_{0}(\Phi^{-1}_{\beta(0)}(x,z)),\quad\forall (x,z)\in\mathbb{T}_{L}\times(0,1),\\
		v_{\widehat{\rho}}\chi_{\omega}(x,z,t)=v_{\rho}\chi_{\widetilde{\omega}}(\Phi^{-1}_{\beta}(x,z,t)),\,\,v_{\widehat{u}}\chi_{\omega}(x,z,t)=v_{u}\chi_{\widetilde{\omega}}(\Phi^{-1}_{\beta}(x,z,t)),\\
		\qquad\qquad\qquad\qquad\qquad\qquad\qquad\qquad\qquad\forall (x,z,t)\in (\mathbb{T}_{L}\times(0,1))\times(0,T),\\
		v_{\beta}\chi_{{\omega}_{1}}(x,1,t)=v_{\beta}\chi_{\widetilde{\omega}_{1}}(\Phi^{-1}_{\beta}(x,1,t)),\quad\forall x\in\mathbb{T}_{L}\,\,\mbox{and}\,\,\forall t\in(0,T).
		\end{array}
		\end{equation}
		After transformation the nonlinear control problem \eqref{1.1*} is rewritten as following 
		\begin{equation}\label{1.16chp3}
		\left\{
		\begin{array}{ll}
		\partial_{t}{\widehat{\rho}}+\begin{bmatrix}
		\widehat {u}_{1}\\
		\frac{1}{(1+\beta)}(\widehat {u}_{2}-z\partial_{t}\beta-z\widehat{u}_{1}\partial_{x}\beta) 
		\end{bmatrix}
		\cdot \nabla\widehat{\rho}+\widehat{\rho}\mbox{div}\widehat{u}&\\
		\qquad\qquad\qquad\qquad\qquad\qquad\qquad={F}_{1}(\widehat{\rho},\widehat{u},\beta)+v_{\widehat{\rho}}\chi_{\omega}\,& \mbox{in}\, (\mathbb{T}_{L}\times(0,1))\times(0,T),
		\vspace{1.mm}\\
		\widehat\rho (\partial_{t}\widehat {u}+(\widehat{u}\cdot\nabla)\widehat{u})-\mu\Delta \widehat {u}-(\mu'+\mu)\nabla(\mbox{div}\widehat {u})+ \nabla  P(\widehat{\rho})\\[2.mm] \qquad\qquad\qquad\qquad\qquad\qquad\qquad\qquad\quad={F}_{2}(\widehat \rho,\widehat {u},\beta)+v_{\widehat{u}}\chi_{\omega}\, &\mbox{in} \, (\mathbb{T}_{L}\times(0,1))\times(0,T),
		\vspace{1.mm}\\
		\widehat{u}\cdot n=\widehat{u}_{2}=\partial_{t}\beta+\partial_{x}\beta\widehat{u}_{1}\,& \mbox{on}\,(\mathbb{T}_{L}\times\{1\})\times(0,T),\\[1.mm]
		\widehat {u}(\cdot,t)\cdot n=0\,& \mbox{on}\, (\mathbb{T}_{L}\times\{0\})\times(0,T),
		\vspace{1.mm}\\
		\displaystyle\mbox{curl}\widehat{u}=\frac{\beta\partial_{z}\widehat{u}_{1}}{(1+\beta)}-\frac{z\partial_{x}\beta\partial_{z}\widehat{u}_{2}}{(1+\beta)}\,&\mbox{on}\,((\mathbb{T}_{L}\times\{0,1\})\times(0,T),
		\vspace{1.mm}\\
		\widehat {u}(\cdot,0)=\widehat{u}_{0}\,& \mbox{in} \, \mathbb{T}_{L}\times(0,1),
		\vspace{1.mm}\\
		\widehat{\rho}(\cdot,0)={\widehat\rho_{0}}\,& \mbox{in}\, \mathbb{T}_{L}\times(0,1),
		\vspace{1.mm}\\
		\partial_{tt}\beta-\partial_{txx}\beta+\partial_{xxxx}\beta\\
		\qquad\quad=\big(-(\mu'+2\mu)\mbox{div}u+P(\widehat{\rho})\big)+F_{3}(\widehat{\rho},\widehat{u},{\beta})+v_{\beta}\chi_{\omega_{1}} \,& \mbox{on}\, (\mathbb{T}_{L}\times\{1\})\times(0,T),
		\vspace{1.mm}\\
		\beta(0)=\beta_{0}\quad \mbox{and}\quad\partial_{t}\beta(0)=\beta_{1}\,&\mbox{in}\, \mathbb{T}_{L}\times\{1\},
		\end{array} \right.
		\end{equation}
		where the non homogeneous terms $F_{1}(\widehat{\rho},\widehat{u},\beta),$ $F_{2}(\widehat{\rho},\widehat{u},\beta)$ and $F_{3}(\widehat{\rho},\widehat{u},\beta),$ are non linear in its arguments and are given by
		\begin{equation}\label{F123chp3}
		\begin{aligned}
		\displaystyle
		&{F}_{1}(\widehat {\rho},\widehat {u},\beta)=\frac{1}{(1+\beta)}\big(z\partial_{z}\widehat{u}_{1}\partial_{x}\beta\widehat{\rho}+\beta\widehat{\rho}\partial_{z}\widehat{u}_{2}\big),\\
		&{F}_{2}(\widehat \rho,\widehat { u},\beta)=-\beta\widehat{\rho}\partial_{t}\widehat { u}+z\widehat\rho\partial_{z}\widehat {u}\partial_{t}\beta-\beta\widehat \rho\widehat {u}_{1}\partial_{x}\widehat {u}+\widehat { u}_{1}\partial_{z}\widehat {u}\partial_{x}\beta\widehat \rho z+\mu \big(\beta\partial_{xx}\widehat {u}-\frac{\beta \partial_{zz}\widehat {u}}{(1+\beta)}\\
		&-2\partial_{x}\beta z\partial_{xz}\widehat {u}
		+\frac{\partial_{zz}\widehat {u}z^{2}(\partial_{x}\beta)^{2}}{(1+\beta)}
		+\partial_{z}\widehat {u}\big( \frac{(1+\beta)z\partial_{xx}\beta-2(\partial_{x}\beta)^{2}z}{(1+\beta)}\big)\big)+(\mu+\mu')\cdot\\
		& \begin{bmatrix}
		\displaystyle\beta\widehat \partial_{xx}{u}_{1}-\partial_{xz}\widehat {u}_{1}z\partial_{x}\beta-\partial_{x}\beta z\big(\partial_{xz}\widehat { u}_{1}-\frac{\partial_{zz}\widehat {u}_{1}z\partial_{x}\beta}{(1+\beta)}\big)+\partial_{z}\widehat { u}_{1}\big(\frac{(1+\beta)z\partial_{xx}\beta-2(\partial_{x}\beta)^{2}z}{(1+\beta)}\big)\\\displaystyle -\frac{\partial_{x}\beta\partial_{z}\widehat {u}_{2}}{(1+\beta)}-\frac{\partial_{x}\beta z\partial_{zz}\widehat { u}_{2}}{(1+\beta)}\\[8.mm]
		\displaystyle-\frac{\partial_{x}\beta\partial_{z}\widehat {u}_{1}}{(1+\beta)}-\frac{\partial_{x}\beta z\partial_{zz}\widehat { u}_{1}}{(1+\beta)}-\frac{\beta\partial_{zz}\widehat {u}_{2}}{(1+\beta)}
		\end{bmatrix}\\
		&  -(\beta \partial_{x}P(\widehat{\rho})-\partial_{z}P(\widehat{\rho})z\partial_{x}\beta)\vec{e_{1}},\smallskip\\
		& F_{3}(\widehat{\rho},\widehat {u},\beta)=(\mu'+2\mu)\left(\frac{z\partial_{z}\widehat{u}_{1}\partial_{x}\beta}{(1+\beta)}-\frac{\beta\partial_{z}\widehat{u}_{2}}{(1+\beta)}\right),
		\end{aligned}
		\end{equation}
	and $n$ denotes the unit normal to the boundary $(\mathbb{T}_{L}\times\{0,1\})$ of the extended reference domain $(\mathbb{T}_{L}\times(0,1)),$ $i.e.$
		\begin{equation}\nonumber
		n=\left\{ \begin{array}{ll}
		(0,1)\quad&\mbox{on}\quad \Gamma_{s},\\
		(0,-1)\quad&\mbox{on}\quad \Gamma_{l}.
		\end{array}\right.
		\end{equation}
		Since $\beta=0$ on $\mathbb{T}_{L}\times\{0\},$ (as we do not consider a structure on $\mathbb{T}_{L}\times\{0\}$) \eqref{1.16chp3}$_{5}$ of course implies that $\curl\widehat{u}=0$ on $\mathbb{T}_{L}\times\{0\}.$
		\subsubsection{A control problem for the linearized model around the stationary state $(\overline{\rho},\overline{u},0)$}\label{linear}
		We first pose the question of local exact controllability to the steady state $(\overline{\rho},\overline{u},0)$, where
		\begin{equation}\label{baru}
		\begin{array}{l}
		\overline{\rho}>0\,\,\mbox{is such that }\,\,P_{ext}=a\overline{\rho}^{\gamma}\,\,\mbox{and}\,\,\overline{u}=\begin{pmatrix}
		\overline{u}_{1}\\0
		\end{pmatrix}, \,\,\overline{u}_{1}>0\,\,\mbox{is a constant},
		\end{array}
		\end{equation}
		which is obviously a stationary solution of the system \eqref{1.1chp3}-\eqref{interface}-\eqref{u.n0}-\eqref{curl0}-\eqref{1.2chp3}-\eqref{1.3chp3} or \eqref{1.1*} without the control functions.
		\\
		To be more precise, if $(\rho_0, u_0, \beta_0, \beta_1)$ is close in a suitable topology to $(\overline\rho, \overline u, 0, 0)$, then the question of local exact controllability of \eqref{1.1*}, aims in finding control functions $(v_\rho, v_u, v_\beta)$ such that the solution of \eqref{1.1*} satisfies $(\rho(T), u(T), \beta(T), \partial_t \beta(T)) = (\overline \rho, \overline u, 0, 0)$. 
		\\
	Since we give a sense to the solution of system \eqref{1.1*} by posing it into a fixed domain, i.e the system \eqref{1.16chp3} we will rather talk about the controllability of $(\widehat{\rho},\widehat{u},\beta,\partial_{t}\beta)$ around $(\overline{\rho},\overline{u},0,0).$ Of course $(\overline{\rho},\overline{u},0)$ is also a stationary solution to \eqref{1.16chp3} without the control functions.\\ 
		The following change of unknowns
		\begin{equation}\label{coun}
		\begin{array}{l}
		\widetilde{\sigma}=\widehat{\rho}-\overline{\rho},\quad \widetilde{u}=\widehat{u}-\overline{u},\quad \beta=\beta-0, 
		\end{array}
		\end{equation}
		reduces the controllability problem to the state $(\overline\rho, \overline u,0,0)$ for $(\widehat\rho, \widehat u, \beta, \partial_t \beta)$ into a local null controllability problem for $(\widetilde\sigma, \widetilde u, \beta, \partial_t \beta)$.
		\\
		We further introduce the following notations corresponding to the control functions which are consistent with the new unknowns defined in \eqref{coun}.
		\begin{equation}\label{nconn}
		\begin{array}{l}
		v_{\widetilde{\sigma}}\chi_{\omega}=v_{\widehat{\rho}}\chi_{\omega},\quad v_{\widetilde{u}}\chi_{\omega}=v_{\widehat{u}}\chi_{\omega}.
		\end{array}
		\end{equation}
		In fact, as we would like to obtain a local null-controllability problem for $(\widetilde\sigma, \widetilde u, \beta, \partial_t \beta)$, it seems reasonable to start by considering the linearized problem around the state $(0,0,0,0)$.
		Starting from system \eqref{1.16chp3} and the non-linear terms \eqref{F123chp3} and dropping all the non-linear terms in $(\widetilde \sigma, \widetilde u, \beta, \partial_t \beta)$, we obtain 
		\begin{equation}\label{1.16llnr}
		\left\{
		\begin{array}{ll}
		\displaystyle
		\partial_{t}{\widetilde{\sigma}}+\overline{u}_{1}\partial_{x}\widetilde{\sigma}+\overline{\rho}\mbox{div}\widetilde{u}=v_{\widetilde{\sigma}}\chi_{\omega}\,& \mbox{in}\, (\mathbb{T}_{L}\times(0,1))\times(0,T),
		\vspace{1.mm}\\
		\displaystyle
		\overline{\rho}(\partial_{t}\widetilde {u}+\overline{u}_{1}\partial_{x}\widetilde{u})-\mu\Delta \widetilde {u}-(\mu'+\mu)\nabla(\mbox{div}\widetilde {u})\\
		\qquad\qquad\qquad\qquad\qquad+ P'(\overline{\rho})\nabla\widetilde{\sigma}=v_{\widetilde{u}}\chi_{\omega}\, &\mbox{in} \, (\mathbb{T}_{L}\times(0,1))\times(0,T),
		\vspace{1.mm}\\
		\widetilde{u}\cdot n=\widetilde{u}_{2}=\partial_{t}\beta+\overline{u}_{1}\partial_{x}\beta\,& \mbox{on}\,(\mathbb{T}_{L}\times\{1\})\times(0,T),\\[1.mm]
		\widetilde {u}(\cdot,t)\cdot n=0\,& \mbox{on}\, (\mathbb{T}_{L}\times\{0\})\times(0,T),
		\vspace{1.mm}\\
		\displaystyle\mbox{curl}\widetilde{u}=0\,&\mbox{on}\,((\mathbb{T}_{L}\times\{0,1\}) \times(0,T),
		\vspace{1.mm}\\
		\widetilde{u}(\cdot,0)=\widehat{u}_{0}-\overline{u}=\widetilde{u}_{0}\,& \mbox{in} \, \mathbb{T}_{L}\times(0,1),
		\vspace{1.mm}\\
		\widetilde{\sigma}(\cdot,0)=\widehat{\rho}_{0}-\overline{\rho}={\widetilde\sigma_{0}}\,& \mbox{in}\, \mathbb{T}_{L}\times(0,1),
		\vspace{1.mm}\\
		\partial_{tt}\beta-\partial_{txx}\beta+\partial_{xxxx}\beta\\
		\qquad=\big(-(\mu'+2\mu)\mbox{div}\widetilde{u}+P'(\overline{\rho})\widetilde{\sigma})+v_{\beta}\chi_{\omega_{1}} \,& \mbox{on}\, (\mathbb{T}_{L}\times\{1\})\times(0,T),
		\vspace{1.mm}\\
		\beta(0)=\beta_{0}\quad \mbox{and}\quad\partial_{t}\beta(0)=\beta_{1}\,&\mbox{in}\, \mathbb{T}_{L}\times\{1\}.
		\end{array} \right.
		\end{equation}
		In the present article we will prove an observability inequality corresponding to the adjoint of the system \eqref{1.16llnr}. The observability inequality proved here implies by duality the null controllability of the linear system \eqref{1.16llnr}. But the observation will be obtained using strong norms of the unknowns which will only allow the attainment of the null controllability result for \eqref{1.16llnr} in a very weak sense ($i.e$ in some negative order Sobolev spaces), which are not enough to pass from the null controllability of the linear system \eqref{1.16llnr} to the local exact controllability of the non linear model \eqref{1.16chp3}. This is the reason why we will only present the observability result for the adjoint of \eqref{1.16llnr} without stating the corresponding controllability of \eqref{1.16llnr} in negative order Sobolev spaces.  
		%
		
		\subsection{Main result: Observability of the adjoint of \eqref{1.16llnr}}
		In order to study the null controllability of the linearized problem \eqref{1.16llnr}, the classical strategy is to prove the observability of the adjoint system of \eqref{1.16llnr}. Here, the adjoint system of \eqref{1.16llnr}, computed with respect to the scalar product $L^2(\mathbb{T}_{L}\times(0,1)) \times L^2(\mathbb{T}_{L}\times(0,1)) \times  L^2(\mathbb{T}_{L}\times\{1\}) \times L^2(\mathbb{T}_L\times\{1\})$, reads as follows:
		\begin{equation}\label{adjsys}
		\left\{ \begin{array}{ll}
		\displaystyle
		-\partial_{t}\sigma-\overline{u}_{1}\partial_{x}\sigma-P'(\overline{\rho})\mbox{div}v=0\,& \mbox{in}\, (\mathbb{T}_{L}\times(0,1))\times(0,T),
		\vspace{1.mm}\\
		\displaystyle
		-\overline{\rho}(\partial_{t}v+\overline{u}_{1}\partial_{x}v)-\mu\Delta v-(\mu'+\mu)\nabla(\mbox{div}v)-\overline{\rho}\nabla\sigma=0\, &\mbox{in} \, (\mathbb{T}_{L}\times(0,1))\times(0,T),
		\vspace{1.mm}\\
		v\cdot n={v}_{2}=\psi\,& \mbox{on}\,(\mathbb{T}_{L}\times\{1\})\times(0,T),\\[1.mm]
		{v}(\cdot,t)\cdot n=0\,& \mbox{on}\, (\mathbb{T}_{L}\times\{0\})\times(0,T),
		\vspace{1.mm}\\
		\displaystyle\mbox{curl}v=0\,&\mbox{on}\,((\mathbb{T}_{L}\times\{0,1\})\times(0,T),
		\vspace{1.mm}\\
		v(\cdot,T)=v_{T}\,& \mbox{in} \, \mathbb{T}_{L}\times(0,1),
		\vspace{1.mm}\\
		{\sigma}(\cdot,T)=\sigma_{T}\,& \mbox{in}\, \mathbb{T}_{L}\times(0,1),
		\vspace{1.mm}\\
		\partial_{tt}\psi+\partial_{txx}\psi+\partial_{xxxx}\psi\\
		\qquad \qquad=(\partial_{t}+\overline{u}_{1}\partial_{x})[(\mu'+2\mu)\mbox{div}\,v+\overline{\rho}\sigma] \,& \mbox{on}\, (\mathbb{T}_{L}\times\{1\})\times(0,T),
		\vspace{1.mm}\\
		\psi(T)=\psi_{T}\quad \mbox{and}\quad\partial_{t}\psi(T)=\psi_{T}^{1}\,&\mbox{in}\, \mathbb{T}_{L}\times\{1\}.
		\end{array}\right.
		\end{equation}
		The well-posedness of system \eqref{adjsys} is stated as the following result, which is proved in Section \ref{lemin}:
		\begin{thm}\label{lemincreg}
			Let 
			\begin{equation}\label{indatadj}
			\begin{split}
			(\sigma_{T},v_{T},\psi_{T},\psi^{1}_{T})\in H^{2}(\mathbb{T}_{L}\times(0,1))\times H^{3}(\mathbb{T}_{L}\times(0,1))&\times H^{3}(\mathbb{T}_{L}\times\{1\})\\
			&\times H^{1}(\mathbb{T}_{L}\times\{1\})
			\end{split}
			\end{equation}
			and the following compatibility relations hold
			\begin{equation}\label{newcompat}
			\begin{array}{llll}
			&(i)\,\,(a)& v_{T}\cdot n=(v_{T})_{2}=\psi_{T},\,\,&\mbox{on}\,\, \mathbb{T}_{L}\times\{1\},\\
			&\quad\,\,\,(b)& v_{T}\cdot n=(v_{T})_{2}=0,\,\,&\mbox{on}\,\, \mathbb{T}_{L}\times\{0\},\\
			&(ii)\,\,& \mathrm{curl}\,v_{T}=0,\,\,&\mbox{on}\,\,\mathbb{T}_{L}\times\{0,1\}\\
			&(iii)\,\,(a)& -\overline{u}_{1}\partial_{x}(v_{T})_{2}+\frac{\mu}{\overline{\rho}}\Delta(v_{T})_{2}+\frac{(\mu+\mu')}{\overline{\rho}}\partial_{z}(\mathrm{div}\,v_{T})\\
			&&-\partial_{z}\sigma_{T}=\psi^{1}_{T}& \mbox{on}\,\, \mathbb{T}_{L}\times\{1\},\\[2.mm]
			&\quad\quad\,(b)& -\overline{u}_{1}\partial_{x}(v_{T})_{2}+\frac{\mu}{\overline{\rho}}\Delta(v_{T})_{2}+\frac{(\mu+\mu')}{\overline{\rho}}\partial_{z}(\mathrm{div}\,v_{T})\\
			&&-\partial_{z}\sigma_{T}=0&\mbox{on}\,\, \mathbb{T}_{L}\times\{0\}.\\
			\end{array}
			\end{equation}
			Then the system \eqref{adjsys} admits a unique solution $(\sigma,v,\psi)$ which satisfies the following regularity
			\begin{equation}\label{imregv}
			\left\{ \begin{array}{ll}
			& \sigma\in C^{0}([0,T];H^{2}(\mathbb{T}_{L}\times(0,1)))\cap C^{1}([0,T];H^{1}(\mathbb{T}_{L}\times(0,1))),\\
			& v\in L^{2}(0,T;H^{3}(\mathbb{T}_{L}\times(0,1)))\cap H^{1}(0,T;H^{2}(\mathbb{T}_{L}\times(0,1)))\\
			&\qquad\cap H^{2}(0,T;L^{2}(\mathbb{T}_{L}\times(0,1))),\\
			& \psi\in L^{2}(0,T;H^{4}(\mathbb{T}_{L}\times\{1\}))\cap H^{1}(0,T;H^{2}(\mathbb{T}_{L}\times\{1\}))\\
			&\qquad\cap H^{2}(0,T;L^{2}(\mathbb{T}_{L}\times\{1\})).
			\end{array}\right.
			\end{equation} 
		\end{thm}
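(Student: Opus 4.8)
The plan is to reverse time by setting $s=T-t$, which turns \eqref{adjsys} into a \emph{forward} initial value problem with data $(\sigma_T,v_T,\psi_T,\psi_T^1)$ prescribed at $s=0$. Under this change the velocity equation becomes a genuinely parabolic (forward heat/Lam\'e--type) equation $\overline{\rho}\partial_s v-\mu\Delta v-(\mu+\mu')\nabla\div v=\overline{\rho}\,\overline{u}_1\partial_x v+\overline{\rho}\nabla\sigma$, the beam equation becomes the structurally damped plate $\partial_{ss}\psi-\partial_{sxx}\psi+\partial_{xxxx}\psi=(-\partial_s+\overline{u}_1\partial_x)[(\mu'+2\mu)\div v+\overline{\rho}\sigma]$, while the density equation stays a first order transport equation $\partial_s\sigma-\overline{u}_1\partial_x\sigma=P'(\overline{\rho})\div v$. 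The essential structural observation is that the system is \emph{not} uniformly parabolic: $v$ and $\psi$ are smoothed by the viscous and structural-damping terms, whereas $\sigma$ solves a transport equation and merely propagates the regularity of its datum. Accordingly I would not seek a single analytic semigroup on one space, but rather decouple the density from the fluid--beam block and close the coupling by a contraction argument.

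First I would treat the fluid--structure block for $(v,\psi,\partial_s\psi)$, regarding $\sigma$ as a given source. On the energy space built from $\{v\in H^1(\mathbb{T}_L\times(0,1))\ :\ \div v,\curl v\in L^2,\ v\cdot n=0 \text{ on } \Gamma_\ell,\ v_2=\psi \text{ on }\Gamma_s,\ \curl v=0 \text{ on } \mathbb{T}_L\times\{0,1\}\}\times H^2(\mathbb{T}_L\times\{1\})\times L^2(\mathbb{T}_L\times\{1\})$, I would define the operator $\mathcal{A}$ generating the block. Using the linear analogue of the energy identity \eqref{1.15lchp3} I would show $\mathcal{A}$ is dissipative, and then establish the resolvent/sectoriality estimate proving that $\mathcal{A}$ generates an \emph{analytic} semigroup: the viscous term makes the fluid part parabolic and the damping $-\partial_{sxx}\psi$ makes the plate structurally (hence analytically) damped, while the kinematic coupling $v_2=\psi$ is compatible with sectoriality. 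Maximal parabolic regularity for $\mathcal{A}$ then yields, for data in the appropriate interpolation space and source $\overline{\rho}\nabla\sigma\in L^2(0,T;H^1)$ (guaranteed once $\sigma\in C^0([0,T];H^2)$) together with the boundary forcing built from $\div v$ and $\sigma$, the claimed regularity $v\in L^2(H^3)\cap H^1(H^2)\cap H^2(L^2)$ and $\psi\in L^2(H^4)\cap H^1(H^2)\cap H^2(L^2)$.

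Next I would solve the transport equation for $\sigma$. Because $\overline{u}_1$ is a constant and $\mathbb{T}_L$ is a torus, the characteristics are explicit translations, so given $v\in L^2(0,T;H^3)$ (hence $\div v\in L^2(0,T;H^2)\hookrightarrow L^1(0,T;H^2)$) the equation $\partial_s\sigma-\overline{u}_1\partial_x\sigma=P'(\overline{\rho})\div v$ with $\sigma(0)=\sigma_T\in H^2$ has a unique solution $\sigma\in C^0([0,T];H^2)\cap C^1([0,T];H^1)$, with a linear estimate in terms of $\|\sigma_T\|_{H^2}$ and $\|\div v\|_{L^1(H^2)}$. Composing the two solution maps gives a map $\sigma\mapsto v\mapsto\sigma$; on a short interval $[0,\tau]$ the coupling terms carry a small factor $\tau^{\theta}$, so this map is a contraction and produces a unique local solution with the stated regularity. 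Since \eqref{adjsys} is linear and the a priori estimates are uniform, I would then patch the local solutions to cover all of $[0,T]$, and uniqueness on the whole interval follows from an energy estimate applied to the difference of two solutions.

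Finally I would account for the compatibility relations \eqref{newcompat}, which are exactly what is needed to reach the regularity up to the (reversed) initial time rather than merely on the open interval. Conditions $(i)$ and $(ii)$ say that $v_T$ already satisfies the spatial boundary conditions, i.e. that $(\sigma_T,v_T,\psi_T,\psi_T^1)$ lies in the energy space, while condition $(iii)$ is obtained by reading the second component of the velocity equation on $\Gamma_s$ and $\Gamma_\ell$ at the terminal time and imposing $\partial_s\psi(0)=\psi_T^1$; it is thus the first-order compatibility placing the datum in the domain of $\mathcal{A}$ (equivalently in the interpolation space forcing $v\in H^2(0,T;L^2)$ up to the endpoint). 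The hardest part will be the fluid--structure step: proving sectoriality and maximal regularity of $\mathcal{A}$ with the unusual condition $\curl v=0$ and the dynamic kinematic coupling $v_2=\psi$, and then tracking constants carefully enough in the fixed point, since obtaining $v\in L^2(H^3)$ requires $\sigma\in C^0(H^2)$, which in turn needs $\div v\in L^1(H^3)$ --- a borderline balance that the transport estimate must deliver without loss.
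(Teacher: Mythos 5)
Your architecture (time reversal, a parabolic fluid--structure block, a transport equation for $\sigma$, a short-time fixed point, and compatibility conditions to reach the endpoint) parallels the paper's at a high level, but the decomposition is genuinely different, and the difference matters. The paper never works with the monolithic $(v,\psi)$ block. It first passes to the dual effective viscous flux $q=(\mu'+2\mu)\mathrm{div}\,v+\overline{\rho}\sigma$ of \eqref{efvisfl}; the trace computation \eqref{nmnq} shows that on the boundary $\partial_{z}q=-\overline{\rho}(\partial_{t}\psi+\overline{u}_{1}\partial_{x}\psi)$, so $(\sigma,q,\psi)$ satisfies the closed system \eqref{adjsysq}, in which the heat equation for $q$ is coupled to the beam only through Neumann data depending on $\psi$ alone and to $\sigma$ only through zeroth-order terms. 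Lemma \ref{lemconstp2} then runs a Banach fixed point on $(\widehat\sigma,\widehat\psi)$ in which each loop gains time integrability through standard parabolic maximal regularity for a scalar Neumann heat problem, and $v$ is reconstructed \emph{a posteriori} from the decoupled parabolic system \eqref{sysv}.

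The gap in your plan is the central claim that the coupled operator $\mathcal{A}$ for $(v,\psi,\partial_{s}\psi)$ is sectorial with maximal $L^{2}$ regularity. In \eqref{adjsys} the beam is driven by the boundary trace of $(\partial_{t}+\overline{u}_{1}\partial_{x})[(\mu'+2\mu)\mathrm{div}\,v+\overline{\rho}\sigma]$; the portion $\partial_{t}\mathrm{div}\,v\mid_{\Gamma_{s}}$ is, through the velocity equation, a third-order trace of $v$, i.e.\ a coupling that is \emph{not} of lower order relative to the generator, and no off-the-shelf analyticity result (e.g.\ \cite{raymondbeamchp3}, which treats a different, incompressible coupling) covers it. Remarks \ref{rem-q-good-unknow} and \ref{extensionremfrcbm} make explicit that taming precisely this boundary coupling is the reason the unknown $q$ --- and even the simplified stress \eqref{1.3chp3} --- were chosen; dissipativity obtained from the energy identity gives at best a $C_{0}$-semigroup on the energy space, not the analyticity and maximal regularity your scheme needs, so this step is asserted rather than available. (A smaller point: the ``borderline balance'' you worry about is not actually borderline --- propagating $\sigma\in C^{0}([0,T];H^{2}(\mathbb{T}_{L}\times(0,1)))$ along characteristics only requires $\mathrm{div}\,v\in L^{1}(0,T;H^{2}(\mathbb{T}_{L}\times(0,1)))$, which $v\in L^{2}(0,T;H^{3}(\mathbb{T}_{L}\times(0,1)))$ supplies with room to spare; the real obstruction is the fluid--beam generator, not the transport step.)
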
 
		\begin{remark}
			A similar well posedness result can be proved for the linear primal problem \eqref{1.16llnr} which we do not state here. For the statement and the proof of this result one can consult \cite[Theorem 4.1.3, p. 153]{phdthesis} and \cite[Sec 4.2.2, p. 172]{phdthesis}.
		\end{remark}
		%
		The central result of the present article is the observability inequality of the adjoint system \eqref{adjsys}:
		\begin{thm}\label{main3}
			Let $(\overline{\rho},\overline{u},0)$ be as in \eqref{baru}, $T>0$ be such that 
			\begin{equation}
			\label{Tgrtr}
			T  >\frac{d}{\overline{u}_{1}},	
			\end{equation}
			and 
			\begin{equation}\label{fixL}
			\begin{array}{l}
			L=3\overline{u}_{1}T>0.
			\end{array}
			\end{equation}
			\\
			There exists a positive constant $C$ such that for all
			\begin{equation}\label{Thmregas}
			\begin{split}
			(\sigma_{T},v_{T},\psi_{T},\psi^{1}_{T})\in H^{2}(\mathbb{T}_{L}\times(0,1))\times H^{3}(\mathbb{T}_{L}\times(0,1))&\times H^{3}(\mathbb{T}_{L}\times\{1\})\\
			&\times H^{1}(\mathbb{T}_{L}\times\{1\}),
			\end{split}
			\end{equation}
			satisfying the compatibility conditions \eqref{newcompat},
			then the solution $(\sigma,v,\psi)$ of the problem \eqref{adjsys} (in the sense of Theorem \ref{lemincreg}) satisfies the following observability inequality:
			\begin{multline}\label{obsinq}
			\| \sigma(\cdot, 0) \|_{H^1(\mathbb{T}_L \times (0,1))}
			+
			\| v (\cdot, 0)\|_{H^2(\mathbb{T}_L \times (0,1))}
			+
			\|( \psi(\cdot, 0), \partial_t \psi(\cdot, 0)) \|_{H^3(\mathbb{T}_L \times \{1\})) \times H^1(\mathbb{T}_L \times \{1\})}
			\\
			\leq 
			C \| \psi \|_{L^2(\omega_1^T)}
			+ 
			C \| v \|_{L^2(0,T; H^2(\omega)) \cap H^1(0,T; H^{1}(\omega))}
			+C\|\sigma\|_{L^{2}(0,T;H^{1}(\omega))}.
			\end{multline}
			where the notations $\omega$ and $\omega_{1}$ for the observation sets were introduced in \eqref{conznf} and $\omega_T = \omega \times (0,T)$, $\omega_1^T = \omega_1 \times (0,T)$.
		\end{thm}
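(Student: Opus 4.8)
The plan is to exploit the special algebraic structure of the adjoint system \eqref{adjsys}, in which the velocity field separates into a purely parabolic (vorticity) component and a coupled hyperbolic--parabolic (acoustic) component, to establish a global Carleman estimate for each block under a common weight, and then to convert these into the observability inequality \eqref{obsinq}. First I would introduce the vorticity $w=\curl v$ and the dilation $d=\div v$. Taking the curl of the momentum equation \eqref{adjsys}$_2$ annihilates both the pressure gradient $\overline\rho\nabla\sigma$ and the term $(\mu+\mu')\nabla\div v$, so that $w$ solves a scalar convection--diffusion equation
\[
-\overline\rho(\partial_t+\overline u_1\partial_x)w-\mu\Delta w=0,\qquad w=0\ \text{ on }\mathbb{T}_L\times\{0,1\},
\]
with homogeneous Dirichlet data coming from \eqref{adjsys}$_5$. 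For this block I would use a classical parabolic Carleman estimate with a weight $e^{-s\varphi}$ whose spatial profile degenerates on the observation set $\omega$ of \eqref{conznf}, thereby controlling $w$ and, after differentiation, its derivatives (to reach the strong norms) by the interior observation of $v$.

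The second and central block is the acoustic pair. Taking the divergence of \eqref{adjsys}$_2$ and combining with \eqref{adjsys}$_1$ yields
\[
(\partial_t+\overline u_1\partial_x)\sigma=-P'(\overline\rho)\,d,\qquad -\overline\rho(\partial_t+\overline u_1\partial_x)d-(2\mu+\mu')\Delta d-\overline\rho\Delta\sigma=0,
\]
in which $\sigma$ obeys a pure transport equation with no smoothing effect. The parabolic Carleman machinery fails outright on the $\sigma$-equation, so I would couple a parabolic Carleman estimate for $d$ with a transport estimate for $\sigma$ obtained by integrating along the characteristics of $\partial_t+\overline u_1\partial_x$. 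The hypotheses \eqref{Tgrtr} and \eqref{fixL} are precisely what make this work: since $T>d/\overline u_1$ and $L=3\overline u_1T$, every characteristic issued from a point of the physical channel $(0,d)$ spends a positive time interval inside the extended observation zone $\omega$ before leaving it, which routes the density information into the region where $v$ (hence $d$) is observed. The delicate point is to use one and the same weight for $d$ and a compatible exponential weight along characteristics for $\sigma$: the weight must be monotone along the field $\partial_t+\overline u_1\partial_x$ so that differentiating $e^{-s\varphi}$ produces a definite-sign transport term absorbing the feedback $\overline\rho\Delta\sigma$ into the $d$-equation, while retaining the standard parabolic behaviour in $z$ and near $t=0,T$. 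Carrying out this coupled estimate, then differentiating once more in space to recover $\sigma\in H^1$ and $d\in H^1$, and finally reconstructing $v$ in $H^2$ from $(\curl v,\div v)$ and its normal trace via elliptic regularity, is the heart of the argument.

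Next I would treat the damped beam. Restricting the acoustic quantities to $\Gamma_s$ furnishes the forcing $(\partial_t+\overline u_1\partial_x)[(\mu'+2\mu)d+\overline\rho\sigma]$ of \eqref{adjsys}$_8$, while the fluid sees the beam through the boundary condition $v_2=\psi$, so the coupling is two-way through the interface. I would derive an energy/Carleman estimate for the one-dimensional operator $\partial_{tt}+\partial_{txx}+\partial_{xxxx}$ using the observation $\|\psi\|_{L^2(\omega_1^T)}$ on the extended beam $\omega_1$, and then match the boundary trace terms generated by the integrations by parts in the fluid estimates against those arising in the beam estimate, so that the fluid--structure coupling terms cancel or are absorbed once the Carleman parameters are taken large. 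Assembling the three weighted estimates into a single global Carleman inequality for $(\sigma,v,\psi)$, absorbing all lower-order and coupling terms for $s$ large, and finally invoking the well-posedness and dissipation estimates of Theorem \ref{lemincreg} lets me bound the initial-data norms $\|\sigma(\cdot,0)\|_{H^1}+\|v(\cdot,0)\|_{H^2}+\|(\psi(\cdot,0),\partial_t\psi(\cdot,0))\|_{H^3\times H^1}$ by the weighted integrals over a central time subinterval, which the Carleman estimate in turn dominates by the right-hand side of \eqref{obsinq}.

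I expect the main obstacle to be the coupled Carleman estimate for the acoustic pair $(\sigma,d)$: reconciling the transport-driven, non-smoothing density equation with the parabolic divergence equation under a common weight, and using the geometric conditions \eqref{Tgrtr}--\eqref{fixL} to guarantee that characteristics deliver the density into the observation region, is where the essential difficulty lies. The fluid--structure boundary bookkeeping with the beam is a second, technically heavy but conceptually secondary, difficulty.
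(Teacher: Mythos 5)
Your decomposition into vorticity, acoustic pair, and beam, and the final elliptic reconstruction of $v(\cdot,0)$ from $(\div v,\curl v,\psi)$, match the paper's architecture, as does the use of a weight transported by $\partial_t+\overline u_1\partial_x$ and the role you assign to \eqref{Tgrtr}--\eqref{fixL}. But there is a genuine gap at what you yourself identify as the heart of the argument: you propose to run a coupled Carleman/transport estimate directly on the pair $(\sigma,d)$ with $d=\div v$, and to ``absorb the feedback $\overline\rho\Delta\sigma$ into the $d$-equation'' by choosing a weight monotone along characteristics. This cannot work as stated. The term $\overline\rho\Delta\sigma$ is of the \emph{same order} as the principal part of the parabolic operator acting on $d$; it is not a lower-order perturbation that large Carleman parameters can absorb. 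To dominate $\iint e^{-2s\phi}|\Delta\sigma|^2$ you would need a weighted $H^2$ estimate on $\sigma$, but $\sigma$ solves a pure transport equation with no smoothing: the transport observability estimate returns $\sigma$ (and, after differentiating the system, $\nabla\sigma$) in exactly the same weighted space as its source, with no gain of derivatives, so closing the loop requires ever higher derivatives of $d$ and the scheme loses derivatives indefinitely. The paper circumvents precisely this obstruction by changing unknown to the dual effective viscous flux $q=(\mu'+2\mu)\div v+\overline\rho\sigma$ (see \eqref{efvisfl} and Remark \ref{rem-q-good-unknow}): then $\Delta q$ packages both second-order terms together, $q$ solves a heat equation whose coupling to $\sigma$ is only zeroth order, and the three scalar Carleman/observability estimates can be summed and the cross terms absorbed for $s,\lambda$ large.

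A second, related omission concerns the boundary. Taking the trace of the momentum equation on $\mathbb{T}_L\times\{1\}$ yields $\partial_z\bigl((\mu'+2\mu)\div v+\overline\rho\sigma\bigr)=-\overline\rho(\partial_t\psi+\overline u_1\partial_x\psi)$, i.e.\ a clean Neumann condition for $q$ but a condition for $d$ that is contaminated by $\partial_z\sigma$, whose trace is not controlled by any of your estimates (the transport equation gives no boundary regularity for the normal derivative of $\sigma$). Your plan to ``match the boundary trace terms generated by the integrations by parts in the fluid estimates against those arising in the beam estimate'' therefore has no closed system to work with unless you pass to $q$. With that change of unknown your outline essentially becomes the paper's proof: Carleman for the beam with source $(\partial_t+\overline u_1\partial_x)q|_{\mathbb{T}_L\times\{1\}}$, Carleman for the Neumann heat equation satisfied by $q$, transport observability for $\sigma$ with source $q$, summation and absorption, propagation from an intermediate time to $t=0$ via the well-posedness of the $(\sigma,q,\psi)$ system, and finally the separate parabolic estimate for $\curl v$ plus elliptic regularity.
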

		$\mathit{Comments\,\,on\,\, the\,\, choice\,\, of\,\,T\,\,and\,\,L\,\,in\,\,Theorem\,\,\ref{main3}}$: 
		We recall \eqref{Tgrtr} and \eqref{fixL}. The condition \eqref{Tgrtr} means that the time of observability should be greater than the time taken to cross the channel length $d$ by a particle moving with a velocity $(\overline{u}_{1},0).$ This condition is imposed due to the hyperbolic nature of the transport equation satisfied by $\sigma$ in the system \eqref{adjsys}. In fact this condition plays a key role in obtaining the observability estimate for a hyperbolic transport equation in Section \ref{sectrans}. Our proof in Section \ref{sectrans} depends on a duality argument and a controllability result for the dual to the problem considered in Section \ref{sectrans} which is obtained in \cite{ervguglachp3}. Hence one can consult \cite{ervguglachp3} for a more explicit construction of the controlled trajectory where the restriction of the final time \eqref{Tgrtr} comes into play.\\
		The role of \eqref{fixL} can be better explained after we define some Carleman weights in Section \ref{Sec-CarlemanS}. Hence we refer the readers to the Remark \ref{roleLst} for the explanation behind choosing $L$ as in \eqref{fixL}. Still, let us mention that from the control point of view, the value of $L>0$ does not play any role in the restriction argument presented in Section \ref{Controlproandexten}.\\
		The following is a corollary to Theorem \ref{main3} and corresponds to the unique continuation property for the system \eqref{adjsys}:
		\begin{corollary}\label{uniquecontinuation}
			Let $(\overline{\rho},\overline{u},0)$ be as in \eqref{baru}, $T>0$ and $L>0$ satisfies \eqref{Tgrtr} and \eqref{fixL}. Further let $(\sigma_{T},v_{T},\psi_{T},\psi^{1}_{T})$ satisfies \eqref{Thmregas}. If the solution $(\sigma,v,\psi)$ of the problem \eqref{adjsys} (in the sense of Theorem \ref{lemincreg}) solves $(\sigma,v)=(0,0)$ in $\omega_{T}$ and $\psi=0$ in $\omega^{T}_{1},$ then
			$$(\sigma,v,\psi)=0\quad\mbox{in}\quad ((\mathbb{T}_{L}\times(0,1))\times(0,T))^{2}\times((\mathbb{T}_{L}\times\{1\})\times(0,T)),$$
			where $\omega_T = \omega \times (0,T)$, $\omega_1^T = \omega_1 \times (0,T).$
		\end{corollary}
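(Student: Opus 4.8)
The plan is to deduce the conclusion directly from the observability inequality \eqref{obsinq} of Theorem \ref{main3} and then to upgrade the resulting vanishing of the Cauchy data at the single time $t=0$ to vanishing on the whole cylinder by a backward–uniqueness argument. First I would feed the hypotheses of the corollary into \eqref{obsinq}. Since $\omega$ and $\omega_1$ (defined in \eqref{conznf}) are, up to a null set, open and the solution enjoys the regularity \eqref{imregv}, the assumption $(\sigma,v)=(0,0)$ on $\omega_T$ makes all spatial and temporal derivatives of $v$ and $\sigma$ vanish on $\omega\times(0,T)$, so that $\|v\|_{L^2(0,T;H^2(\omega))\cap H^1(0,T;H^1(\omega))}=0$ and $\|\sigma\|_{L^2(0,T;H^1(\omega))}=0$; likewise $\psi=0$ on $\omega_1^T$ gives $\|\psi\|_{L^2(\omega_1^T)}=0$. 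Hence the right-hand side of \eqref{obsinq} is zero, and the inequality forces
\[
\sigma(\cdot,0)=0,\quad v(\cdot,0)=0,\quad \psi(\cdot,0)=0,\quad \partial_t\psi(\cdot,0)=0 .
\]

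It then remains to propagate this vanishing Cauchy data at $t=0$ to all of $(0,T)$. This is precisely a backward–uniqueness question, because \eqref{adjsys} is backward parabolic in $v$: after the change of variable $t\mapsto T-t$ it becomes a forward–in–time coupled problem (a forward heat system for $v$, a transport equation for $\sigma$, and a damped beam with the correct sign of the dissipation $-\partial_{txx}\psi$ for $\psi$), which is well posed by the time–reversed form of Theorem \ref{lemincreg}. In these variables the vanishing just established is vanishing at the \emph{final} time, and the assertion $(\sigma,v,\psi)\equiv 0$ is exactly backward uniqueness for this coupled system. I would therefore prove the following self-contained statement: any solution of \eqref{adjsys} in the class \eqref{imregv} whose Cauchy data at $t=0$ vanishes is identically zero.

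To organize this backward uniqueness I would exploit the div–curl structure encoded in the boundary conditions $\mathrm{curl}\,v=0$ on $\mathbb{T}_L\times\{0,1\}$. Taking the curl of the momentum equation annihilates $\nabla\sigma$ and $\nabla\,\mathrm{div}\,v$, so $\mathrm{curl}\,v$ solves a scalar convected heat equation with homogeneous Dirichlet data and zero initial value; for this, backward uniqueness is classical (logarithmic convexity / Agmon–Nirenberg), giving $\mathrm{curl}\,v\equiv 0$. Taking the divergence produces a heat equation for $q:=\mathrm{div}\,v$ coupled, through a $\Delta\sigma$ term, to the transport equation satisfied by $\sigma$; I would handle the pair $(\sigma,q)$, together with the fluid–beam coupling, by a log–convexity (Carleman–in–time) estimate near $t=0$, deducing $\sigma\equiv 0$ and $q\equiv 0$. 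Once $\mathrm{curl}\,v$, $\mathrm{div}\,v$ and the beam trace vanish, $v\equiv 0$ follows from the overdetermined elliptic problem it then satisfies with its boundary conditions, and the damped-beam equation for $\psi$ with vanishing forcing and zero Cauchy data yields $\psi\equiv 0$.

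The main obstacle is the backward uniqueness for the coupled transport–parabolic pair $(\sigma,q)$ intertwined with the beam: the transport part carries no smoothing, so the standard parabolic backward–uniqueness machinery does not apply verbatim, and one must use the specific coupling constants $\overline{u}_1$ and $P'(\overline{\rho})$ to close a differential inequality for a suitable energy functional (equivalently, to verify the convexity hypothesis of the Lions–Malgrange / Agmon–Nirenberg scheme after reformulating $(\sigma,q)$ as a single damped second-order-in-time equation for $\sigma$). The delicate point within the convexity argument is to absorb the boundary contributions on $\mathbb{T}_L\times\{0,1\}$, where $v$ is coupled to $\psi$; here the periodicity in $x$ together with the homogeneous conditions $v\cdot n=0$, $\mathrm{curl}\,v=0$ and the already-established vanishing of the beam traces are what make those boundary terms controllable.
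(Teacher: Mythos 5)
There is a genuine gap, and it sits exactly where you placed it yourself: the backward-uniqueness step. Your first step is fine — with $(\sigma,v)=0$ in $\omega_T$ and $\psi=0$ in $\omega_1^T$, the right-hand side of \eqref{obsinq} vanishes and you get zero Cauchy data at $t=0$. But everything after that is an unproven program, not a proof. The standard log-convexity/Agmon--Nirenberg machinery does not apply to the coupled transport--parabolic--beam system: the transport component has no elliptic part, so the skew term $\overline{u}_1\partial_x\sigma$ is not relatively bounded in the required sense; the moving-frame conjugation that would remove the convection in the bulk does not commute with the beam operator $\partial_{tt}+\partial_{txx}+\partial_{xxxx}$, so it destroys the boundary coupling; and with your choice $q=\div v$ the coupling to $\sigma$ enters through $\Delta\sigma$, i.e.\ at second order on the unsmoothed component, which is worse than the paper's effective-flux variable $q=(\mu'+2\mu)\div v+\overline{\rho}\sigma$ of \eqref{adjsysq}, where the coupling is zeroth order. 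Your appeal to the ``already-established vanishing of the beam traces'' to control the boundary terms is circular: at that stage you know only $\psi(\cdot,0)=0$ and $\partial_t\psi(\cdot,0)=0$, not $\psi\equiv 0$. The paper needs none of this, because the corollary is \emph{not} deduced from the final inequality \eqref{obsinq} but from the intermediate Carleman estimate \eqref{obs-system-sigma-q-psi}: its left-hand side is a weighted integral of $(\sigma,q,\psi)$ and their derivatives over the \emph{whole} cylinder, with weights $e^{-2s\phi}$, $\xi^{i}e^{-2s\phi}$ strictly positive for $0<t<T$; once the observation terms vanish, this forces $(\sigma,q,\psi)\equiv 0$ on all of $(0,T)$ in one stroke, so no propagation from $t=0$ is required. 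Then $\div v\equiv 0$ follows algebraically, $\curl v\equiv 0$ follows from ordinary interior unique continuation (a Carleman estimate in the well-posed direction, not backward uniqueness) for the heat equation \eqref{eqcurl} using $\curl v=0$ on $\omega_T$, and $v\equiv 0$ from the elliptic system \eqref{decompl}. By discarding the global-in-time content of the Carleman estimates and keeping only the $t=0$ observability, you have converted an easy corollary into a hard open-ended backward-uniqueness problem that your sketch does not resolve.

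A secondary, fixable flaw: your last step claims $v\equiv 0$ from the ``overdetermined elliptic problem'' once $\div v$, $\curl v$ and the boundary traces vanish. On the periodic channel $\mathbb{T}_L\times(0,1)$ this system has a nontrivial kernel: the constant fields $v=(c,0)$ satisfy $\div v=0$, $\curl v=0$ and $v\cdot n=0$ on both walls, reflecting the nontrivial topology in the $x$-direction. You must additionally invoke the hypothesis $v=0$ in $\omega_T$ to kill this kernel — which is precisely what the paper does when it treats \eqref{decompl} component-wise and uses $v=0$ in $\omega_T$.
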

		The proof of Corollary \ref{uniquecontinuation} will follow from an intermediate step in the proof of Theorem \ref{main3}. The proof is included in Section \ref{proofucont}.  
		\begin{remark}
			Roughly speaking observability inequality is a quantified version of unique continuation property. In that sense observability inequality is stronger than unique continuation property. As a special case the results Theorem \ref{main3} and Corollary \ref{uniquecontinuation} implies the observability and unique continuation of the adjoint of linearized compressible Navier-Stokes equations in a 2D channel where the fluid velocity satisfies the Navier-slip boundary condition without friction at the lateral boundaries $i.e$ 
			$$\,v\cdot n=0\quad\mbox{and}\quad(2\mu D(v)+\mu'\mbox{div}\,v I_{d})n\cdot \vec{\tau}=0
			\,\,\mbox{on}\,\,
			(\mathbb{T}_{L}\times\{0,1\}),$$
			where $n$ and $\vec{\tau}$ respectively denotes the normal and tangent to the boundary. To the best of our knowledge this result is new in itself. The only articles so far dealing with the controllability and observability issues of compressible Navier-Stokes equations in dimension more than one are \cite{ervguglachp3} and \cite{molina} where the problem is posed in a torus. 
		\end{remark}
		\subsection{Ideas and Strategy} 
		Now we will briefly discuss our ideas and strategy to prove Theorem \ref{main3}.\\
		The underlying idea behind the proof of Theorem \eqref{main3} is the identification of the suitable unknowns to track down the dynamics of $(\sigma,v,\psi).$ It is well known that the coupling of $\sigma$ and $v$ is strong. When considering the primal problem \eqref{1.16llnr}, the dynamics between $\widetilde{\sigma},$ $\widetilde{u}$ can be made simpler by introducing the effective viscous flux, see \cite{lionschp3} and \cite{feireislchp3}. For the adjoint problem, a similar quantity, already used in \cite{ervguglachp3}, also simplifies the description of the dynamics:
		\begin{equation}\label{dualefvifl}
		\begin{array}{l}
		q=(\mu'+2\mu)\mbox{div}\,v+\overline{\rho}\sigma.
		\end{array}
		\end{equation}
		This can be termed as the dual version of the effective viscous flux.  Now in our case it is important to identify the behavior of $q$ at the boundaries and specially at the fluid solid interface. This way we obtain a closed loop system solved by $(\sigma,q,\psi).$ For details we refer the readers to Section \ref{Sec-Well-posed}. One can in particular look into the system \eqref{adjsysq} to observe that unlike the coupling between $\sigma$ and $v$ in system \eqref{adjsys}, the coupling between $\sigma$ and $q$ is of lower order. Also it is easier to deal with $(\sigma,q,\psi),$ since it has less degrees of freedom in comparison with $(\sigma,v,\psi).$ We use this new set of unknowns $(\sigma,q,\psi)$ both to prove the well posedness result stated in Theorem \ref{lemincreg} and the observability Theorem \ref{main3}. In Section \ref{Sec-Well-posed} we prove Theorem \ref{lemincreg}.\\ 
		Next we focus in proving an observability inequality for the system satisfied by $(\sigma,q,\psi).$ In that direction we first separately study the observability inequalities of some scalar equations $i.e$ an adjoint damped beam equation, an adjoint heat equation and an adjoint transport equation. The observability estimates for the adjoint damped beam and the adjoint heat equations rely on Carleman estimates while for the adjoint transport equation we use a duality argument and some controllability estimates motivated from \cite{ervguglachp3}. The main difficulty here is to obtain these separate observability estimates with a single goal of combining them suitably to obtain an observability for the coupled system solved by $(\sigma,q,\psi).$ For the parabolic hyperbolic couplings this question is handled in the articles \cite{albano}, \cite{ervguglachp3} (for compressible Navier-Stokes equations), \cite{molina} (for compressible heat conducting fluid) and \cite{silvazz} (for damped viscoelasticity equations). The idea is to use compatible weight functions for the parabolic and hyperbolic equations so that the resulting observability estimates can be suitably combined. In our case along with a parabolic hyperbolic coupling there is a direct coupling between $q$ and $\psi$ at the fluid boundary (see the system \eqref{adjsysq} for details). Hence to use the ideas from \cite{albano}, \cite{silvazz} and \cite{ervguglachp3} in our case there is not many options other than considering a one dimensional weight function, since the beam is one dimensional. We introduce such  weight function in Section \ref{Conw8fn}. Then using this weight function we state a Carleman estimate for the adjoint damped beam equation, see Section \ref{Carldbeam} for details. This Carleman estimate is taken from a very recent article \cite{Carlemanbeam} or more appropriately from \cite[p. 182, Section 4.3.2]{phdthesis}. Then using the same weight function for an adjoint heat equation with Neumann boundary condition we recover a Carleman estimate proved in \cite{farnan}. Of course the weight functions considered in \cite{farnan} and the present article are not the same. This Carleman estimate is included in Section \ref{Carlheat}. In the beginning of Section \ref{Carlheat}, we also point out the difference between our weight function used in proving the Carleman estimate for an adjoint heat equation with the one standard in the literature. Then the same weight function is used to obtain an observability estimate for an adjoint transport equation in Section \ref{sectrans}. This is done first by obtaining some controllability estimates in the spirit of \cite{ervguglachp3} followed by a  duality argument.
		
		Next in Section \ref{obssys} we combine the Carleman estimates obtained in Section \ref{Sec-CarlemanS}. First using suitably large values of the Carleman parameters we are able to prove an inequality corresponding to the unique continuation property for the system satisfied by $(\sigma,q,\psi).$ This inequality is explicitly given by \eqref{obs-sigma-q-psi-after-carls}. It is not surprising to obtain an estimate of the form \eqref{obs-sigma-q-psi-after-carls} by combining three different observability estimates. In fact as it is well known that the Carleman parameters quantify the compactness of a system hence our strategy to obtain the inequality \eqref{obs-sigma-q-psi-after-carls} strongly relates on the proof of Step 1 in Theorem \ref{lemincreg}, where we prove the well posedness of \eqref{adjsysq} by gaining a time integrability of a suitable fixed point map. After this unique continuation estimate is used to show an observability estimate of $(\sigma,q,\psi)$ at some intermediate time, see \eqref{penobst-2T0} for details. We further use a well posedness result for the system satisfied by $(\sigma,q,\psi)$ to obtain an observability estimate over $(\sigma,q,\psi)$ at initial time $t=0.$ Finally using this observability estimate over $(\sigma,q,\psi)(\cdot,0)$ we recover an observability estimate of $(\sigma,\div\,v,\psi)(\cdot,0),$ which combined with an observability inequality of $\curl\,v(\cdot,0)$ furnishes the desired inequality \eqref{obsinq}. 
		\subsection{Related bibliography}
		Concerning the incompressible Navier-Stokes equations in a $2D$ domain one can find a result proving the local exact controllability to trajectories with localized boundary control in \cite{fursikov}. It is assumed in \cite{fursikov} that the fluid satisfies no vorticity boundary condition in the complement of the control part of the boundary. Local exact controllability to trajectories for incompressible Navier-Stokes equations in a $3D$ domain with distributed control and homogeneous Dirichlet boundary condition can be found in \cite{immanuvilov}. With less regularity assumption on the target trajectory the result in \cite{immanuvilov} was improved in \cite{guefarcara}. We would also like to mention the article \cite{guenolinnvsp} for the local exact distributed controllability to trajectories for incompressible Navier-Stokes equations in a $3D$ domain with non linear Navier-slip boundary condition. In all of these articles the fluid is assumed to be homogeneous $i.e$ the fluid density is constant. In a very recent article \cite{ervbad} the authors prove the local exact boundary controllability to smooth trajectories for a non homogeneous incompressible Navier-Stokes equation in a three dimensional domain. For global controllability results for incompressible Navier-Stokes equations we refer the readers to \cite{coronglobal}, \cite{chapouly} and the references therein.\\
		Now we quote a few articles dealing with the controllability issues of fluid structure interaction models. In fact to the best of our knowledge the only known results concerning the controllability issues of a fluid structure interaction problem in dimension greater than one deals with the motion of a rigid body inside a incompressible fluid modeled by Navier-Stokes equations where the structural motion are given by the balance of linear and angular momentum. Local null controllability of such an interaction problem in dimension two can be found in \cite{bouosses} and \cite{Imtaka}. In dimension three a local null controllability for such a system is proved in \cite{bouguejem}. The article \cite{raymondbeamchp3} deals with the problem of feedback stabilization (in infinite time) for an incompressible fluid structure interaction problem in a $2D$ channel where the structure appears at the fluid boundary and is modeled by an Euler-Bernoulli damped beam, the one we consider in \eqref{1.16llnr}$_{8}$-\eqref{1.16llnr}$_{9}.$ To our knowledge so far there does not exist any article dealing with the finite time controllability of a fluid structure interaction problem (neither for incompressible nor compressible fluids) in dimension more than one where the structure appears at the fluid boundary.
		We would also like to refer the readers to \cite{plateeqzuazua1}, \cite{platezhang} and \cite{plateeqzuzua2} for observability estimates individually for the Euler-Bernoulli plate equations and Kirchoff plate systems without damping.
		
		We also like to quote a few articles from the literature dealing with the controllability issues of compressible Navier-Stokes equation. In fact our strategy to handle the coupling of the fluid velocity and density in the system \eqref{1.16llnr} amounts in introducing a new unknown namely the effective viscous flux and this strategy is inspired from the article \cite{ervguglachp3}. The article \cite{eggpchp3} concerns the motion of a fluid in dimension one whereas \cite{ervguglachp3} and \cite{molina} deal with fluid flows in dimension two and three. For the controllability issues of one dimensional compressible Navier Stokes equations we also refer the readers to \cite{rammy} and \cite{raymcho}.\\
		Concerning the unique continuation property of incompressible Stokes and Oseen equations we refer the readers to \cite{fabrelebeau} and \cite{Triggianiucont}. For the use of Carleman estimates in proving controllability and unique continuation properties of parabolic and elliptic PDE's one can also consult \cite{rousseaulebeau}. The article \cite{ervguglachp3} dealing with the local exact controllability of the compressible Navier-Stokes equations set in a three dimensional torus of course proves observability results for the adjoint of the corresponding linearized system which is stronger than unique continuation. But to the best of our knowledge there is no unique continuation result available even for linear compressible Navier-Stokes equations (without the structure) in dimension more than one where the domain is not entirely a torus. As a particular situation Corollary \ref{uniquecontinuation} implies the unique continuation property for the adjoint of linear compressible Navier-Stokes equations in a 2D channel $\mathbb{T}_{L}\times(0,1)$ where the fluid velocity solves Navier-slip boundary conditions without friction at the lateral boundaries of the channel.
		 \subsection{Outline} Section \ref{Sec-Well-posed} is devoted for the proof of the well posedness result Theorem \ref{lemincreg}. In Section \ref{Sec-CarlemanS} we prove several observability estimates. More precisely in Section \ref{Carldbeam} we state a Carleman estimate for the adjoint of a damped beam equation, in Section \ref{Carlheat} we prove Carleman estimate for the adjoint of a heat equation and finally in Section \ref{sectrans} we prove some observability estimates for a transport equation. In Section \ref{obssys}, we suitably combine the Carleman estimates obtained in Section \ref{Sec-CarlemanS} to furnish the proof of the central result Theorem \ref{main3}. We also include the proof of Corollary \ref{uniquecontinuation} in Section \ref{proofucont}. The final Section \ref{appendix} contains the proof of Lemma \ref{lemconstp2}, which is a intermediate step in proving Theorem \ref{main3}, and a result on parabolic regularization, Lemma \ref{parabolicregularization} which in turn is used during the proof of Lemma \ref{Lem-Obs-sigma-q}.

		\section{Well posedness result for the adjoint problem \eqref{adjsys}}
		\label{Sec-Well-posed}
		This section is devoted for the proof of Theorem \ref{lemincreg}. As it turns out, this proof will also give some insights of the strength of the various coupling between the equations in \eqref{adjsys}, which will also help in the proof of Theorem \ref{main3}.
		%
		
		%
		
		\subsection{Proof of Theorem \ref{lemincreg}}\label{lemin}
		The proof is divided into two main steps. The first one consists in introducing the new unknown 
		\begin{equation}\label{efvisfl}
		\begin{array}{l}
		q=(\mu'+2\mu)\mbox{div}\,v+\overline{\rho}\sigma, 
		\end{array}
		\end{equation}
		and looking at the system satisfied by $(\sigma, q, \beta)$, which turns out to be easier to analyze than the full system \eqref{adjsys}. The second step will then consist in deducing from the regularity on $(\sigma, q, \beta)$ suitable estimates for the function $v$ in \eqref{adjsys}.
		
		\begin{remark}
			\label{rem-q-good-unknow}
			The unknown $q$ in \eqref{efvisfl} can be interpreted as the dual version of the effective viscous flux introduced for instance in \cite{lionschp3}, see also \cite{feireislchp3}. In fact, this quantity already appeared in \cite{ervguglachp3} when studying the controllability properties of a compressible fluid (without structure and controls acting on the whole boundary), where it helps to weaken the coupling of the parabolic and hyperbolic effects of the system.
			\\
			Here, the interesting point is that this quantity is also suitable to deal with the coupling with the structure lying on the boundary. There, we strongly use that the force acting on the beam is given by \eqref{1.3chp3} instead of the more natural one \eqref{stressor}, which would not yield such a clean understanding of the coupling between the fluid and the structure.
		\end{remark}
		
		\subsubsection{Step 1. The system satisfied by $(\sigma, q, \beta)$}
		
		We first derive the system that $(\sigma, q, \beta)$ should satisfy provided $(\sigma, v, \beta)$ satisfies \eqref{adjsys} and has the regularity given by \eqref{imregv}. Indeed, these regularities allow to take the trace of $\nabla^{2}v$ and $\nabla\sigma$ $a.e$ on $(\mathbb{T}_{L}\times\{0,1\})\times(0,T).$ Hence we can consider the trace of the equation \eqref{adjsys}$_{2}$ and use \eqref{adjsys}$_{3}$-\eqref{adjsys}$_{5}$ to have the following $a.e$ on $((\mathbb{T}_{L}\times\{1\}))\times(0,T):$
		\begin{equation}\label{nmnq}
		\begin{array}{ll}
		0 & = -\overline{\rho}(\partial_{t}v_{2}+\overline{u}_{1}\partial_{x}v_{2})-\mu(\partial_{xx}v_{2}+\partial_{zz}v_{2})-(\mu+\mu')(\partial_{xz}v_{1}+\partial_{zz}v_{2})-\overline{\rho}\partial_{z}\sigma
		\\
		& =  -\overline{\rho}(\partial_{t}\psi+\overline{u}_{1}\partial_{x}\psi)-(\mu'+2\mu)(\partial_{xz}v_{1}+\partial_{zz}v_{2})-\overline{\rho}\partial_{z}\sigma\,\,\\
		&\qquad\qquad\qquad\qquad\qquad\qquad\qquad\qquad\qquad(\mbox{using}\,\eqref{adjsys}_{3},\eqref{adjsys}_{5})\\
		&=-\overline{\rho}(\partial_{t}\psi+\overline{u}_{1}\partial_{x}\psi) -  \partial_{z}q.
		\end{array}
		\end{equation}
		Similarly one can obtain that on the boundary $(\mathbb{T}_{L}\times\{0\})\times(0,T),$ $q$ satisfies
		$$\partial_{z}q=0\quad a.e\,\,\mbox{on}\quad (\mathbb{T}_{L}\times\{0\})\times(0,T).$$
		Hence with the formal calculations above and using \eqref{adjsys} we obtain the following system satisfied by the unknowns $(\sigma,q,\psi):$
		\begin{equation}\label{adjsysq}
		\left\{ \begin{array}{ll}
		\displaystyle
		-\partial_{t}\sigma-\overline{u}_{1}\partial_{x}\sigma+\frac{P'(\overline{\rho})\overline{\rho}}{\nu}\sigma=\frac{P'(\overline{\rho})}{\nu}q\,& \mbox{in}\, (\mathbb{T}_{L}\times(0,1))\times(0,T),
		\vspace{1.mm}\\
		\displaystyle
		-(\partial_{t}q+\overline{u}_{1}\partial_{x}q)-\frac{\nu}{\overline{\rho}}\Delta q-\frac{P'(\overline{\rho})\overline{\rho}}{\nu}q
		=-\frac{P'(\overline{\rho})\overline{\rho}^{2}}{\nu}\sigma\, &\mbox{in} \, (\mathbb{T}_{L}\times(0,1))\times(0,T),
		\vspace{1.mm}\\
		\partial_{z}q=-\overline{\rho}(\partial_{t}\psi+\overline{u}_{1}\partial_{x}\psi)\,& \mbox{on}\,(\mathbb{T}_{L}\times\{1\})\times(0,T),\\[1.mm]
		\partial_{z}q=0\,& \mbox{on}\, (\mathbb{T}_{L}\times\{0\})\times(0,T),
		\vspace{1.mm}\\
		q(\cdot,T)=q_{T}\,& \mbox{in} \, \mathbb{T}_{L}\times(0,1),
		\vspace{1.mm}\\
		{\sigma}(\cdot,T)=\sigma_{T}\,& \mbox{in}\, \mathbb{T}_{L}\times(0,1),
		\vspace{1.mm}\\
		\partial_{tt}\psi+\partial_{txx}\psi+\partial_{xxxx}\psi=(\partial_{t}+\overline{u}_{1}\partial_{x})q\,& \mbox{on}\, (\mathbb{T}_{L}\times\{1\})\times(0,T),
		\vspace{1.mm}\\
		\psi(T)=\psi_{T}\quad \mbox{and}\quad\partial_{t}\psi(T)=\psi_{T}^{1}\,&\mbox{in}\, \mathbb{T}_{L}\times\{1\},
		\end{array}\right.
		\end{equation}
		where $\nu=(\mu'+2\mu)$, and $q_T  =\nu\mbox{div}v_{T}+\overline{\rho}\sigma_{T}$.
		The well-posedness result for the system  \eqref{adjsysq} is stated in form of the following lemma:
	    	\begin{lem}\label{lemconstp2}
			There exists a constant $C >0$ such that for any 
			\begin{equation}\label{indatadjq}
			(\sigma_{T},q_{T},\psi_{T},\psi^{1}_{T})\in H^{1}(\mathbb{T}_{L}\times(0,1))\times H^{2}(\mathbb{T}_{L}\times(0,1))\times H^{3}(\mathbb{T}_{L}\times\{1\})\times H^{1}(\mathbb{T}_{L}\times\{1\})
			\end{equation}
			satisfying the following compatibility conditions
			\begin{equation}\label{compatonqT}
			\begin{split}
			&(i)\,\partial_{z}q_{T}=-\overline{\rho}(\psi^{1}_{T}+\overline{u}_{1}\partial_{x}\psi_{T})\,\,\mbox{on}\,\,\mathbb{T}_{L}\times\{1\},\\
			&(ii)\,\partial_{z}q_{T}=0\,\,\mbox{on}\,\,\mathbb{T}_{L}\times\{0\},
			\end{split}
			\end{equation}
			the system \eqref{adjsysq} admits a unique solution $(\sigma,q,\psi)$ which satisfies
			 \begin{equation}\label{regsgqps}
			 \left\{ \begin{array}{ll}
			 &\sigma\in C^{0}([0,T];H^{1}(\mathbb{T}_{L}\times(0,1)))\cap C^{1}([0,T];L^{2}(\mathbb{T}_{L}\times(0,1))),\\
			 & q\in L^{2}(0,T;H^{3}(\mathbb{T}_{L}\times(0,1)))\cap H^{3/2}(0,T;L^{2}(\mathbb{T}_{L}\times(0,1))),\\
			 & \psi\in L^{2}(0,T;H^{4}(\mathbb{T}_{L}\times\{1\}))\cap H^{1}(0,T;H^{2}(\mathbb{T}_{L}\times\{1\}))\\
			 &\qquad\quad\cap H^{2}(0,T;L^{2}(\mathbb{T}_{L}\times\{1\})).
			 \end{array}\right.
			 \end{equation}
			 and the following estimate
			\begin{equation}\label{corstep2}
			\begin{split}
			&\|\sigma\|_{C^{0}([0,T];H^{1}(\mathbb{T}_{L}\times(0,1)))\cap C^{1}([0,T];L^{2}(\mathbb{T}_{L}\times(0,1)))}+\|q\|_{C^{0}([0,T];H^{2}(\mathbb{T}_{L}\times(0,1)))}\\
			& +\|\psi\|_{C^{0}([0,T];H^{3}(\mathbb{T}_{L}\times\{1\}))\cap C^{1}([0,T];H^{1}(\mathbb{T}_{L}\times\{1\}))}\\
			& \leqslant C(\|q_{T}\|_{H^{2}(\mathbb{T}_{L}\times(0,1))}+\|\sigma_{T}\|_{H^{1}(\mathbb{T}_{L}\times(0,1))}+\|(\psi_{T},\psi^{1}_{T})\|_{H^{3}(\mathbb{T}_{L}\times\{1\})\times H^{1}(\mathbb{T}_{L}\times\{1\})}).
			\end{split}
			\end{equation}
		\end{lem}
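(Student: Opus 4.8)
The plan is to exploit the structure already emphasized in Remark~\ref{rem-q-good-unknow}: in \eqref{adjsysq} the coupling between the transport unknown $\sigma$ and the parabolic unknown $q$ is only of zeroth order, whereas the genuinely delicate coupling---between the fluid quantity $q$ and the beam $\psi$---is confined to the boundary $\mathbb{T}_{L}\times\{1\}$. First I would reverse time by setting $s=T-t$, turning \eqref{adjsysq} into a forward problem with initial data $(\sigma_{T},q_{T},\psi_{T},\psi_{T}^{1})$ at $s=0$; the operator $-\frac{\nu}{\overline{\rho}}\Delta$ then acts with the correct (dissipative) sign and the damped-beam operator becomes dissipative as well, so that forward parabolic and beam theory applies. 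The solution is then built by a fixed point argument that decouples $\sigma$ from $(q,\psi)$: given $\sigma$ in $C^{0}([0,T];L^{2})$ (or a slightly better space), one solves the $(q,\psi)$ subsystem, reads off $q$, and feeds it as the source of the transport equation to produce a new $\sigma$. The gain of a power of time in the transport step is what makes this map a contraction, which is the ``time integrability'' alluded to in the introduction.

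\emph{The inner subsystem.} For $\sigma$ frozen as a source, the pair $(q,\psi)$ solves a heat equation for $q$ with the Neumann-type datum $\partial_{z}q=-\overline{\rho}(\partial_{t}\psi+\overline{u}_{1}\partial_{x}\psi)$ on $\mathbb{T}_{L}\times\{1\}$ and $\partial_{z}q=0$ on $\mathbb{T}_{L}\times\{0\}$, coupled to the damped beam $\partial_{tt}\psi+\partial_{txx}\psi+\partial_{xxxx}\psi=(\partial_{t}+\overline{u}_{1}\partial_{x})q$. I would solve this parabolic--beam block either by casting it as an abstract Cauchy problem and invoking analytic-semigroup generation (the $\partial_{txx}$ damping supplies parabolic smoothing on the one-dimensional structure), or by an auxiliary fixed point: solve the heat equation for $q$ by maximal $L^{2}$ parabolic regularity, treating $\partial_{t}\psi+\overline{u}_{1}\partial_{x}\psi$ as boundary data, then solve the beam equation for $\psi$ treating the trace $(\partial_{t}+\overline{u}_{1}\partial_{x})q|_{\Gamma_{s}}$ as a forcing and using the regularizing effect of the damping to recover $\psi\in L^{2}(H^{4})\cap H^{1}(H^{2})\cap H^{2}(L^{2})$. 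The beam regularity then feeds back a boundary datum in $L^{2}(0,T;H^{2}(\mathbb{T}_{L}))$ for $q$, which via parabolic trace theory yields exactly the announced $q\in L^{2}(0,T;H^{3})\cap H^{3/2}(0,T;L^{2})$ in \eqref{regsgqps}. The compatibility conditions \eqref{compatonqT} are precisely what is needed for the boundary datum of $q$ to be admissible at the initial time $s=0$, so that no regularity is lost and the maximal-regularity estimate holds.

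\emph{The transport step and closing the argument.} Given $q$ (in particular its $L^{2}(0,T;H^{1})$ norm is controlled by the inner estimate), the equation $-\partial_{t}\sigma-\overline{u}_{1}\partial_{x}\sigma+\frac{P'(\overline{\rho})\overline{\rho}}{\nu}\sigma=\frac{P'(\overline{\rho})}{\nu}q$ with $\sigma(\cdot,T)=\sigma_{T}$ is solved explicitly by the method of characteristics along the constant field $\overline{u}_{1}$ on the torus $\mathbb{T}_{L}$; differentiating in $x$ and $z$ shows $\sigma\in C^{0}([0,T];H^{1})\cap C^{1}([0,T];L^{2})$ with a bound of the form $\|\sigma\|\le C(\|\sigma_{T}\|_{H^{1}}+\|q\|_{L^{2}(0,T;H^{1})})$, and when $\sigma_{T}=0$ the Duhamel representation produces an extra factor $T^{1/2}$. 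Feeding this into the difference estimate for the fixed point map---where the $(q,\psi)$-block is linear and depends on $\sigma$ only through the zeroth-order source $-\frac{P'(\overline{\rho})\overline{\rho}^{2}}{\nu}\sigma$---gives $\|\Lambda\sigma_{1}-\Lambda\sigma_{2}\|\le C\,T^{1/2}\|\sigma_{1}-\sigma_{2}\|$, so $\Lambda$ is a contraction on a short interval; by linearity one iterates over finitely many subintervals to cover $[0,T]$ and sums the estimates to reach \eqref{corstep2}. Uniqueness follows from the same contraction applied to the difference of two solutions.

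The main obstacle I anticipate is the inner parabolic--beam block: one must choose the functional framework so that the boundary coupling between the Neumann datum of $q$ (the beam velocity) and the beam forcing (the trace of $\partial_{t}q+\overline{u}_{1}\partial_{x}q$) is well posed and yields the \emph{sharp} $H^{3}$-in-space, $H^{3/2}$-in-time regularity for $q$ rather than the generic $L^{2}(H^{2})\cap H^{1}(L^{2})$ parabolic regularity. Tracking this extra half-derivative in time, and verifying that \eqref{compatonqT} suffices to avoid compatibility losses at $s=0$, is the technical heart of the proof.
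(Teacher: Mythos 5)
Your overall architecture (short-time fixed point, maximal parabolic regularity with the compatibility conditions \eqref{compatonqT}, analyticity of the damped-beam semigroup, iteration over finitely many subintervals by linearity) is the same as the paper's, but the organization of the fixed point is genuinely different, and the difference matters. You freeze only $\sigma$ and propose to solve the coupled $(q,\psi)$ boundary block as a single unit before feeding $q$ into the transport equation; the contraction then comes from the Duhamel factor $T^{1/2}$ in the transport step. The paper instead takes \emph{both} $\widehat{\sigma}$ and $\widehat{\psi}$ as fixed-point unknowns: the heat equation for $q$ is solved with the \emph{given} Neumann datum $-\overline{\rho}(\partial_{t}\widehat{\psi}+\overline{u}_{1}\partial_{x}\widehat{\psi})$ and the \emph{given} source $\widehat{\sigma}$, and only afterwards are $\sigma$ and $\psi$ updated from the resulting $q$. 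With this choice every subproblem is a standard scalar equation, and the contraction is obtained not from a Duhamel time factor but from the embedding $\|\cdot\|_{\mathbb{H}_{1}^{T_{0}}}\leqslant CT_{0}^{s}\|\cdot\|_{\mathbb{H}_{2}^{T_{0}}\times\mathbb{H}_{3}^{T_{0}}}$, i.e.\ from converting the extra fractional time-regularity of the output into a small factor of $T_{0}$ (with all intermediate constants made independent of $T_{0}$ by extension/reflection to a fixed time interval).

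The step you yourself flag as the ``technical heart''---well-posedness of the coupled parabolic--beam block with the sharp $L^{2}(H^{3})\cap H^{3/2}(L^{2})$ regularity for $q$---is exactly the step your scheme requires and does not prove. Neither of your two suggested routes is carried out: the semigroup route would require showing that the coupled operator (heat equation with dynamic Neumann data driven by the beam, beam forced by the trace of $(\partial_{t}+\overline{u}_{1}\partial_{x})q$) generates an analytic semigroup on an appropriate product space with the right domain, which is not obvious; and the ``auxiliary fixed point'' route, once written down, is essentially the paper's iteration in nested form---at which point you would still need a smallness mechanism for that inner map, i.e.\ precisely the $T_{0}^{s}$ regularity-gain argument. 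So the proposal is viable in outline, but as written it defers rather than resolves the one genuinely delicate point; the paper's choice of fixed-point unknowns is the device that makes that point disappear. A secondary remark: to get $\sigma\in C^{1}([0,T];L^{2})$ and the estimate \eqref{corstep2} you need $q\in C^{0}([0,T];H^{1})$ (obtained in the paper by interpolation from \eqref{regsgqps}$_{2}$), not merely $q\in L^{2}(0,T;H^{1})$ as stated in your transport step.
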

		The proof of Lemma \ref{lemconstp2} is included in Section \ref{appendix}.\\
		Next using Lemma \ref{lemconstp2} we will obtain the regularities for $v$ and further prove that the triplet $(\sigma,v,\psi)$ solves the system \ref{adjsys}.

		\subsubsection{Step 2: Constructing $v$.}
		In order to complete the proof of the existence of a solution $(\sigma, v, \psi)$ of \eqref{adjsys}, we first set $q_T =\nu\mbox{div}v_{T}+\overline{\rho}\sigma_{T}$ and solve the system \eqref{adjsysq} with initial data $(\sigma_T, q_T, \psi_T, \psi_T^1)$. Note that the regularity and compatibility conditions of the initial data $(\sigma_T, v_T, \psi_T, \psi_T^1)$ in Theorem \ref{lemincreg} precisely imply the regularity and compatibility conditions of $(\sigma_T, q_T, \psi_T, \psi_T^1)$ required by Lemma \ref{lemconstp2}, so that Lemma \ref{lemconstp2} applies, yielding functions $(\sigma, q, \psi)$ solving \eqref{adjsysq}. Besides, if $\sigma_T \in H^2(\mathbb{T}_{L}\times(0,1))$, it is clear that the solution $\sigma$ of \eqref{adjsysq}$_{(1)}$ in fact satisfies 
		$$
		\sigma\in C^{0}([0,T];H^{2}(\mathbb{T}_{L}\times(0,1)))\cap C^{1}([0,T];H^{1}(\mathbb{T}_{L}\times(0,1))),
		$$
		since the source term of the transport equation belongs to $C^{0}([0,T];H^{1}(\mathbb{T}_{L}\times(0,1)))$
		(by interpolation from \eqref{regsgqps}$_{2}$). For a detailed proof one can imitate the arguments used in proving \cite[Theorem 2.4]{smitraexistence}.
		\\
		We then construct the function $v$ by solving the equation
		\begin{equation}\label{sysv}
		\left\{ \begin{array}{lll}
		&-\overline{\rho}(\partial_{t}v+\overline{u}_{1}\partial_{x}v)-\mu\Delta v-(\mu'+\mu)\nabla(\mathrm{div}v)
		-\overline{\rho}\nabla\sigma=0\, &\mbox{in} \, (\mathbb{T}_{L}\times(0,1))\times(0,T),
		\vspace{1.mm}\\
		& v\cdot n={v}_{2}=\psi\,& \mbox{on}\,(\mathbb{T}_{L}\times\{1\})\times(0,T),\\
		&{v}(\cdot,t)\cdot n=0\,& \mbox{on}\, (\mathbb{T}_{L}\times\{0\})\times(0,T),
		\vspace{1.mm}\\
		&\displaystyle\mathrm{curl}v=0\,&\mbox{on}\,(\mathbb{T}_{L}\times\{0,1\})\times(0,T),\\ 
		& v(\cdot,T)=v_{T}\,& \mbox{in} \, \mathbb{T}_{L}\times(0,1).
		\end{array}\right.
		\end{equation}
		Note that we already know the regularity of the source terms $\overline\rho \nabla \sigma$ and $\psi$ from \eqref{regsgqps}, so this step only consists in constructing a solution to a parabolic equation with source terms having given regularities.\\
    	One can prove the following regularities of $v:$
		\begin{equation}\label{proregv}
		\begin{split}
		v\in L^{2}(0,T;H^{3}(\mathbb{T}_{L}\times(0,1)))\cap H^{1}(0,T;H^{2}(\mathbb{T}_{L}\times(0,1)))
		\cap H^{2}(0,T;L^{2}(\mathbb{T}_{L}\times(0,1)))
		\end{split}
		\end{equation}
		solving the system \eqref{sysv}. The proof of such a parabolic regularity result for the system 
		\eqref{sysv} with no vorticity boundary condition is classical in the literature. For the details of the proof we refer the readers to \cite[p. 168, Section 4.2.1.2]{phdthesis}.\\
		With the strong regularity framework \eqref{proregv} and with $\sigma$ in the space \eqref{regsgqps} one can verify the formal argument \eqref{nmnq} and conclude that the triplet $(\sigma,v,\psi)$ solves the system \eqref{adjsys} in the functional framework \eqref{imregv}.
		\\
		Now in order to prove the uniqueness of the solution $(\sigma,v,\psi)$ of \eqref{adjsys} in the spaces \eqref{imregv}, we proceed as follows. Let us assume that there exists two solutions $(\sigma_{1},v_{1},\psi_{1})$ and $(\sigma_{2},v_{2},\psi_{2})$ to the problem \eqref{adjsys} in the functional spaces \eqref{imregv} with the same initial datum. The strong regularities of $v_{1}$ (as well as $v_{2}$) and $\sigma_{1}$ (as well as $\sigma_{2}$) allows us to verify that $(\sigma_{1},q_{1},\psi_{1})$ and $(\sigma_{2},q_{2},\psi_{2})$ solves \eqref{adjsysq}, where
		$$q_{1}=(\nu\mbox{div}\,v_{1}+\overline{\rho}\sigma_{1})\,\,\mbox{and}\,\, q_{2}=(\nu\mbox{div}\,v_{2}+\overline{\rho}\sigma_{2}).$$
		But the solution of the system \eqref{adjsysq} is unique (thanks to the Banach fixed point argument used in $Step\,2$) in the framework \eqref{regsgqps}. Hence $\sigma_{1}=\sigma_{2}$ and $\psi_{1}=\psi_{2}.$ Now one observes that $v_{1}$ (respectively $v_{2}$) solves \eqref{sysv} with $(\sigma_{1},\psi_{1})$ (respectively $(\sigma_{2},\psi_{2})$). Since $(\sigma_{1},\psi_{1})=(\sigma_{2},\psi_{2}),$ from the uniqueness of the solution to the linear problem \eqref{sysv} we infer that $v_{1}=v_{2}.$ Hence the solution to the problem \eqref{adjsys} is unique in the functional framework \eqref{imregv}.\\
		This concludes the proof of Theorem \ref{lemincreg}.
	 
		\begin{remark}\label{extensionremfrcbm}
			At this point the role of the dual version of effective viscous flux $q$ is clear in order to prove the existence result Theorem \ref{lemincreg}. We now explain more clearly the reason behind considering the simplified model \eqref{1.1chp3}-\eqref{interface}-\eqref{u.n0}-\eqref{curl0}-\eqref{1.2chp3}-\eqref{1.3chp3} instead of using the more physical expression \eqref{stressor} (recall Remark \ref{Remark-Force-Beam}). In our analysis we use the dual version of effective viscous flux to reduce the strength of the parabolic hyperbolic coupling in the system \eqref{adjsys}. With the simplified expression \eqref{1.3chp3} of the net surface force on the beam we are able to write the adjoint of the linearization of the system \eqref{1.1chp3}-\eqref{interface}-\eqref{u.n0}-\eqref{curl0}-\eqref{1.2chp3}-\eqref{1.3chp3} as a closed loop system in terms of the unknowns: dual of fluid density, beam displacement and $q.$ On the other hand it does not seem to be possible if one considers the expression \eqref{stressor} for the net force acting on the beam. In some sense the effective viscous flux does not prove to be a good unknown to weaken the parabolic hyperbolic coupling near the boundary for the adjoint to the system  \eqref{1.1chp3}-\eqref{interface}-\eqref{u.n0}-\eqref{curl0}-\eqref{1.2chp3}-\eqref{stressor}.  
		\end{remark}
		\section{Carleman estimates for scalar equations}\label{Sec-CarlemanS}
		From now onwards we fix our final time horizon $T$ and the length $L$ of our torus such that they satisfy \eqref{Tgrtr} and \eqref{fixL} respectively.\\
		The goal of this section is to provide Carleman estimates for the various equations involved in the system \eqref{adjsys}, in particular:
		\begin{itemize}
			\item a Carleman estimate for the beam equation set in the torus; 
			\item a Carleman estimate for the heat equation with non-homogeneous Neumann boundary conditions; 
			\item a Carleman estimate for the transport equation. 
		\end{itemize}
		One of the difficulties of our work is that, in order to prove Theorem \ref{main3}, one should be able to couple all these Carleman estimates in a suitable way. In order to do that, we will consider one weight function which allows to derive Carleman estimates for the beam equation, the heat equation and the transport equation simultaneously.
		\subsection{Construction of the weight function}\label{Conw8fn}
		$1.$ We first introduce a function $\eta$ on $\mathbb{T}_{L}$ such that 
		\begin{equation}\label{eta*}
		\begin{array}{l}
		\eta\in C^{6}({\mathbb{T}}_{L}),\,\, \eta(x)>0\,\,\mbox{in}\,\, \mathbb{T}_{L},
		\\
		\mbox{inf} \left\{|\nabla\eta(x)|
		\suchthat x\in\mathbb{T}_{L}
		\setminus\{(-3\overline{u}_{1}T,-2\overline{u}_{1}T)\cup
		(d+\overline{u}_{1}T,d+3\overline{u}_{1}T)
		\}\right\}
		>0,
		\end{array}
		\end{equation}
		
		$2.$ Now we define $\eta^{0}\in C^{6}(\mathbb{T}_{L}\times[0,T])$ as follows
		\begin{equation}\label{defeta0}
		\begin{array}{l}
		\eta^{0}(x,t)=\eta(x-\overline{u}_{1}t)\quad\mbox{for all}\quad (x,t)\in \mathbb{T}_{L}\times[0,T],
		\end{array}
		\end{equation}
		$i.e$ $\eta^{0}$ solves
		\begin{equation}\label{transportweight}
		\begin{array}{l}
		(\partial_{t}+\overline{u}_{1}\partial_{x})\eta^{0}=0\quad\mbox{in}\quad (\mathbb{T}_{L}\times(0,1))\times(0,T).
		\end{array}
		\end{equation}
		In view of \eqref{eta*}, one can easily verify the following
		\begin{equation}\label{eta0grtr0}
		\begin{array}{l}
		\mbox{inf}\left\{|\nabla\eta^{0}(x,t)|\suchthat (x,t)\in [-\overline{u}_1 T, d + \overline u_1 T]\times[0,T]\right\}>0,
		\end{array}
		\end{equation}
		%
		$3.$ Next we will define a weight function in the time variable. Let  $T_{0}>0,$ $T_{1}>0,$  small enough such that
		\begin{equation}\label{relT01ep}
		\begin{split}
		2 T_{0}+2T_{1}<T-\frac{d}{\overline{u_{1}}}.
		\end{split}
		\end{equation}
		Now we choose a weight function $\theta(t) \in C^4(0,T)$ such that
		\begin{equation}\label{theta}
		\theta(t)=\left\{ \begin{array}{lll}
		\displaystyle
		&\displaystyle \frac{1}{t^{2}},\,\,&\forall\,\, t\in[0,T_{0}],\smallskip\\
		&\theta\, \mbox{is strictly decreasing}\,\,&\forall\,\, t\in[T_{0}, 2T_0],\\
		& 1\,\,&\forall\,\, t\in[2 T_{0},T-2T_{1}],\\
		& \theta\, \mbox{is strictly increasing}\,\,&\forall\,\, t\in[T-2T_{1},T-T_{1}],\\
		& \displaystyle \frac{1}{(T-t)^{2}},\,\,&\forall\,\, t\in[T-T_{1},T].
		\end{array}\right.
		\end{equation}
		Observe that $\theta(t)$ blows up at the terminal points $\{0\}$ and $\{T\}$ of the interval $(0,T).$\\
		$4.$ In view of $\eta^{0}(x,t)$ and $\theta(t)$ we finally introduce the following weight functions in $\mathbb{T}_{L}\times[0,T],$
		\begin{equation}\label{w8fn}
		\left\{ \begin{array}{l}
		\displaystyle\phi(x,t)=\theta(t)(e^{6\lambda\|\eta^{0}\|_{\infty}}-e^{\lambda(\eta^{0}(x,t)+4\|\eta^{0}\|_{\infty})}),\\
		{\xi}(x,t)=\theta(t)e^{\lambda(\eta^{0}(x,t)+4\|\eta^{0}\|_{\infty})},
		\end{array}\right.
		\end{equation}
		where $\lambda \geq 1$ is a positive parameter. 
		\begin{remark}\label{roleLst}
			Recall that we have fixed $L=3\overline{u}_{1}T.$ The reason lies in the choice \eqref{defeta0} of $\eta^{0}$ which travels along $\mathbb{T}_{L}$ with a velocity $\overline{u}_{1}.$ Of course this choice plays a very important role in Section \ref{sectrans} while obtaining an observability estimate for a hyperbolic transport equation. In that case the domain $(0,d)\times(0,1)$ needs to be embedded in $\mathbb{T}_{L}\times(0,1),$ for $L$ large enough such that $\inf\{|\nabla\eta^{0}(x,t)|\}$ is positive in a neighborhood of $(0,d)\times(0,1),$ since this is crucial to obtain parabolic Carleman estimates. Now the choice $L=3\overline{u}_{1}T$ serves this purpose and provides enough room so that \eqref{eta0grtr0} holds.
		\end{remark}
		From now on we will denote by $c,$ a generic strictly positive small constant and by $C,$ a large constant, where both of them are independent of the parameters $s$ ($\geq 1$) and $\lambda$ ($\geq 1$). 
		\\
		In our computations afterwards we will frequently use the following estimates, valid on $\mathbb{T}_L \times (0,T)$:
		\begin{equation}\label{prilies}
		\begin{array}{l}
		\ds |\partial^{(i)}_{x}\phi|\leqslant C\lambda^{i}{\xi} \quad \mbox{for all}\,i\in\{1,2,3,4\},
		\\
		\ds |\partial_{t}\phi|\leqslant C\lambda{\xi}^{3/2},\quad 
		|\partial_{tt}\phi|\leqslant C\lambda^{2}{\xi}^{2},\quad
		|\partial_{tx}\phi|\leqslant C\lambda^{2}{\xi}^{3/2},\quad 
		|\partial_{txx}\phi|\leqslant C\lambda^{3}{\xi}^{3/2},
		\\
		\ds	|\partial_{txxx}\phi|\leqslant C\lambda^{4}{\xi}^{3/2},\quad
		|\partial_{ttx}\phi|\leqslant C\lambda^{3}{\xi}^{2}\quad \mbox{and}\quad|\partial_{ttxx}\phi|\leqslant C\lambda^{4}{\xi}^{2},
		\end{array}
		\end{equation}
		and 
		\begin{equation}\label{prilies*}
		\begin{array}{l}
		\ds |\partial^{(i)}_{x}{\xi}|\leqslant C\lambda^{i}{\xi}\quad\mbox{for all}\,\, i\in\{1,2,3,4\},\\
		\ds 	|\partial_{t}{\xi}|\leqslant C\lambda{\xi}^{3/2},\quad
		|\partial_{tt}{\xi}|\leqslant C\lambda^{2}{\xi}^{2},\quad
		|\partial_{tx}{\xi}|\leqslant C\lambda^{2}{\xi}^{3/2},\quad
		|\partial_{txx}{\xi}|\leqslant C\lambda^{3}{\xi}^{3/2}
		\\
		\ds 	|\partial_{txxx}{\xi}|\leqslant C\lambda^{4}{\xi}^{3/2},\quad
		|\partial_{ttx}\xi|\leqslant C\lambda^{3}{\xi}^{2}\quad
		\mbox{and}\quad |\partial_{ttxx}\xi|\leqslant C\lambda^{4}{\xi}^{2},
		\end{array}
		\end{equation}
		and, for $\lambda$ large enough, for all $(x,t) \in [- \overline u_1 T, d + \overline u_1 T] \times (0,T)$ and $i \in \{2, 4\}$, 
		\begin{equation}
		\label{Positivity-Weights}
		-\partial^{(i)}_{x}\phi  = \partial^{(i)}_{x}\xi \geqslant c\lambda^{i}{\xi}.
		\end{equation}
			For convenience, we further introduce the following shorthand notations which will be used from now onwards mainly for writing the domain of integrals.
			\begin{equation}
			\label{inshrthnd}
			\begin{array}{l}
			{Q^{ex}_{T}}=(\mathbb{T}_{L}\times(0,1))\times(0,T),\quad \omega_{T}=\omega\times(0,T), 
			\smallskip\\
			\mathbb{T}^{1}_{T} = (\mathbb{T}_{L}\times\{1\})\times(0,T), \quad 
			\mathbb{T}^{0}_{T} = (\mathbb{T}_{L}\times\{0\})\times(0,T)
			,\smallskip\\
			\omega^{T}_{1}=\omega_{1}\times(0,T),
			\end{array}
			\end{equation}
			where $\omega$ and $\omega_{1}$ was introduced in \eqref{conznf}.
		\subsection{Carleman estimate for an adjoint damped beam equation}\label{Carldbeam}
		In the following section we state a Carleman estimate for the adjoint of the damped  beam equation: \begin{equation}\label{adjbeam1}
		\left\{ \begin{array}{ll}
		{\partial_{tt}{\psi}}+{\partial_{txx}{\psi}}+{\partial_{xxxx}{\psi}}=f_\psi \,\,& \mbox{in}\,\, \mathbb{T}^{1}_{T},
		\\
		{\psi}(.,T)={\psi}_{T},\quad{\partial_{t}{\psi}}(.,T)={\psi}_{T}^{1}\,\,&\mbox{in}\,\, \mathbb{T}_{L}\times\{1\}.
		\end{array}\right.
		\end{equation}
		The main theorem of this section is stated as follows
		\begin{thm}\label{Carlbeamthm}
			There exist constants $C>0,$ $s_{0}\geqslant 1,$ $\lambda_{0}\geqslant 1$ such that for all $\psi$ solving \eqref{adjbeam1} with initial datum ${\psi}_{T}\in H^{3}(\mathbb{T}_{L})$ and ${\psi}_{T}^{1}\in H^{1}(\mathbb{T}_{L})$ and source term $f_\psi \in L^{2}(\mathbb{T}^{1}_{T})$, for all $s\geqslant s_{0},$ and $\lambda\geqslant\lambda_{0},$
			\begin{align}\label{crlestbmthm}
			& \displaystyle s^{7}\lambda^{8}\iint_{\mathbb{T}^{1}_{T}} {\xi}^{7}|{\psi}|^{2}e^{-2s\phi}+s^{5}\lambda^{6}\iint_{\mathbb{T}^{1}_{T}} {\xi}^{5}|{\partial_{x}{\psi}}|^{2}e^{-2s\phi}\notag
			\\
			&\displaystyle+s^{3}\lambda^{4}\iint_{\mathbb{T}^{1}_{T}} {\xi}^{3} ( |{\partial_{xx}{\psi}}|^{2}+ |{\partial_{t}{\psi}}|^{2}) e^{-2s\phi}+s\lambda^{2}\iint_{\mathbb{T}^{1}_{T}} {\xi}( |{\partial_{tx}{\psi}}|^{2}+|{\partial_{xxx}{\psi}}|^{2})e^{-2s\phi}\notag \\
			&+
			\displaystyle\frac{1}{s}\iint_{\mathbb{T}^{1}_{T}} \frac{1}{{\xi}}(|{\partial_{tt}{\psi}}|^{2}+|\partial_{txx}\psi|^{2}+|{\partial_{xxxx}{\psi}}|^{2})e^{-2s\phi}
			\\
			&
			\displaystyle\leqslant  
			C\iint_{\mathbb{T}^{1}_{T}} |f_\psi|^{2}e^{-2s\phi}
			+Cs^{7}\lambda^{8}\iint_{\omega^{T}_{1}} {\xi}^{7}|{\psi}|^{2}e^{-2s\phi},\notag
			\end{align}
			where the notations $\mathbb{T}^{1}_{T}$ and $\omega^{T}_{1}$ were introduced in \eqref{inshrthnd}.
		\end{thm}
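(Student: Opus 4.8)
The plan is to follow the classical Carleman strategy, adapted to the fourth-order (in space), second-order (in time) beam operator $P=\partial_{tt}+\partial_{txx}+\partial_{xxxx}$, and exploiting the plate-type scaling in which one time derivative counts as two spatial derivatives. This scaling is already visible in the statement: $\partial_{t}\psi$ is weighted like $\partial_{xx}\psi$ (both carry $s^{3}\lambda^{4}\xi^{3}$), $\partial_{tx}\psi$ like $\partial_{xxx}\psi$, and $\partial_{tt}\psi,\partial_{txx}\psi,\partial_{xxxx}\psi$ all carry the same weight $s^{-1}\xi^{-1}$. First I would conjugate the equation with the Carleman weight, setting $w=e^{-s\phi}\psi$ and computing $P_{\phi}w:=e^{-s\phi}P(e^{s\phi}w)=e^{-s\phi}f_{\psi}=:g$. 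Because the beam lives on the torus $\mathbb{T}_{L}$ in the $x$-variable, periodicity annihilates all spatial boundary terms throughout; and because $\theta(t)\to\infty$ as $t\to 0,T$ (so $\phi\to+\infty$ and $e^{-s\phi}$ together with all its derivatives vanishes at the temporal endpoints), the boundary contributions at $t=0,T$ also vanish, using the regularity of $\psi$ assumed in the hypotheses.

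Second, I would split the conjugated operator as $P_{\phi}=P_{1}+P_{2}+R$ into a formally self-adjoint part $P_{1}$, a formally skew-adjoint part $P_{2}$, and a remainder $R$ of lower weight, then square the identity $P_{1}w+P_{2}w=g-Rw$ in $L^{2}(\mathbb{T}^{1}_{T})$ to obtain
\[
\|P_{1}w\|^{2}+\|P_{2}w\|^{2}+2(P_{1}w,P_{2}w)_{L^{2}(\mathbb{T}^{1}_{T})}=\|g-Rw\|^{2}.
\]
The heart of the proof is the computation of the cross term $2(P_{1}w,P_{2}w)$: integrating by parts in $x$ and $t$ produces, via the commutator $[P_{1},P_{2}]$, the interior positive terms which, after reverting to $\psi=e^{s\phi}w$, become the integrals on the left-hand side of \eqref{crlestbmthm}. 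The key structural input is the positivity \eqref{Positivity-Weights}, namely $\partial^{(i)}_{x}\xi\geqslant c\lambda^{i}\xi$ for $i\in\{2,4\}$ on $[-\overline{u}_{1}T,d+\overline{u}_{1}T]$, which follows from the lower bound \eqref{eta0grtr0} on $|\nabla\eta^{0}|$ and drives the gain of the top-order weight $s^{7}\lambda^{8}\xi^{7}$ for the zeroth-order term.

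Third, I would absorb all sub-leading and remainder contributions — those from $Rw$ and from the lower-weight products appearing in the integration by parts, controlled through the pointwise bounds \eqref{prilies}--\eqref{prilies*} on the derivatives of $\phi$ and $\xi$ — by first fixing $\lambda$ large and then $s$ large, using the strict separation of powers $s^{7}\lambda^{8}\gg s^{5}\lambda^{6}\gg s^{3}\lambda^{4}\gg s\lambda^{2}\gg s^{-1}$ in the weight hierarchy. The only region where \eqref{Positivity-Weights} fails is the part of $\mathbb{T}_{L}$ lying outside $[-\overline{u}_{1}T,d+\overline{u}_{1}T]$, where $|\nabla\eta^{0}|$ may vanish (as allowed by \eqref{eta*}); this set is contained in the observation zone $\omega_{1}$. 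There I would insert a smooth cutoff supported in $\omega_{1}$ and bound the localized error by $Cs^{7}\lambda^{8}\iint_{\omega_{1}^{T}}\xi^{7}|\psi|^{2}e^{-2s\phi}$, which is precisely the right-hand observation term in \eqref{crlestbmthm}.

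The main obstacle I anticipate is the bookkeeping in the cross-term computation: for a fourth-order operator the integration by parts generates a large number of terms, and one must verify carefully that each principal positive term dominates its companions and that no term of the wrong sign survives at the top weight. In particular the damping term $\partial_{txx}\psi$ must be handled so that it neither destroys the positivity nor produces an uncontrollable interior contribution, the difficulty being compounded by the mixed space-time derivatives forced by the plate scaling. Since the statement records that this estimate is available in \cite{Carlemanbeam} and \cite[Section 4.3.2]{phdthesis}, I would organize the computation to match one of those references rather than reproduce every integration by parts.
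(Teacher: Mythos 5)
Your proposal is correct and follows essentially the same route as the paper: the paper does not carry out the computation either, but states that Theorem \ref{Carlbeamthm} is obtained from the Carleman estimate of \cite{Carlemanbeam} (detailed in \cite[Section 4.3.2]{phdthesis}), with the only changes being the time-dependence of $\eta^{0}$ and the modified $\theta$, which affect nothing beyond the bounds \eqref{prilies}--\eqref{prilies*}. The conjugation $w=e^{-s\phi}\psi$, the splitting into self-adjoint and skew-adjoint parts, the cross-term computation driven by \eqref{Positivity-Weights}, and the absorption of the region where $|\nabla\eta^{0}|$ degenerates into the observation set $\omega_{1}$ are exactly the scheme of that reference, and mirror the paper's own treatment of the heat equation in Lemma \ref{Carlman}.
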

		We will not go into the details of the proof of Theorem \ref{Carlbeamthm} but only comment on it with suitable reference.
		\begin{proof}[Comment on the proof of Theorem \ref{Carlbeamthm}]
			The Carleman estimate stated in Theorem \ref{Carlbeamthm} is already proved in the article \cite{Carlemanbeam} but with a difference in the weight functions $\eta^{0}(x,t)$ and $\theta(t).$ To be precise unlike \cite{Carlemanbeam}, in the present article we have used a weight function $\eta^{0}(x,t)$ which depends on the time variable and further the definition \eqref{theta} of $\theta(t)$ also varies from the one used in \cite[p. 2, (1.6)]{Carlemanbeam}. This only leads to minor modifications in the estimates \eqref{prilies} and \eqref{prilies*}. The proof of Theorem \ref{Carlbeamthm} can be carried out essentially in a similar way as the proof of Carleman estimate \cite[Theorem 1.3]{Carlemanbeam}. For a detailed proof of Theorem \ref{Carlbeamthm} with the weight functions defined in Section \ref{Conw8fn} we refer the readers to \cite[p. 182, Section 4.3.2]{phdthesis}.
			\end{proof}
		\subsection{Carleman estimate for an adjoint heat equation}\label{Carlheat}
		In this section we consider the following adjoint heat equation:
		\begin{equation}\label{adjheateq}
		\left\{ \begin{array}{ll}
		\displaystyle-\frac{\overline{\rho}}{\nu}\partial_{t}q-\Delta q=f_{1}\quad&\mbox{in}\quad Q^{ex}_{T},\\
		\displaystyle\partial_{z}q=f_{2}\quad&\mbox{on}\quad \mathbb{T}^{1}_{T},\\
		\displaystyle\partial_{z}q=0\quad&\mbox{on}\quad\mathbb{T}^{0}_{T},\\
		\displaystyle q(.,T)=q_{T}\quad & \mbox{in}\quad \mathbb{T}_{L}\times(0,1),
		\end{array}\right.
		\end{equation}
		where $f_{1}\in L^{2}(Q^{ex}_{T}),$ $f_{2}\in L^{2}(\mathbb{T}^{1}_{T}).$
		\\
		\begin{thm}\label{carlht}
			There exist positive constants ${C},$ ${s}_{1}\geq 1$ and ${\lambda}_{1}\geq 1$  such that for all 
			\begin{equation}\label{assmf12}
			\begin{split}
			f_{1}\in L^{2}({Q^{ex}_{T}}),\quad\mbox{and}\quad f_{2}\in L^{2}(\mathbb{T}^{1}_{T}),
			\end{split}
			\end{equation}
			for all $q_{T}\in L^{2}(\mathbb{T}_{L}\times(0,1)),$
			for all $s\geqslant{s}_{1}$ and $\lambda\geqslant{\lambda}_{1},$ the weak solution $q$ of \eqref{adjheateq} satisfies the following inequality
			\begin{equation}\label{obsnht}
			\begin{split}
			\displaystyle
			&\iint\limits_{{{Q^{ex}_{T}}}}e^{-2s\phi}(s\lambda^{2}{\xi}|\nabla q|^{2}+s^{3}\lambda^{4}{\xi}^{3}|q|^{2})
			+s^{2}\lambda^{3}\iint\limits_{{\mathbb{T}^{1}_{T}}}e^{-2s\phi}{\xi}^{2}|q|^{2}
			\\
			&\leqslant C\left( \iint\limits_{{{Q^{ex}_{T}}}}e^{-2s\phi}|f_{1}|^{2}+s\lambda\iint\limits_{{\mathbb{T}^{1}_{T}}}e^{-2s\phi}{\xi}|f_{2}|^{2}
			\displaystyle
			+s^{3}\lambda^{4}\iint\limits_{{\omega_{T}}}e^{-2s\phi}{\xi}^{3}|q|^{2}\right),
			\end{split}
			\end{equation}
			where the notations $Q^{ex}_{T},$ $\mathbb{T}^{1}_{T}$ and $\omega_{T}$ are introduced in \eqref{inshrthnd}.
		\end{thm}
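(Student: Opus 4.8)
The plan is to use the Fursikov--Imanuvilov conjugation method, adapted both to the non-homogeneous Neumann condition on $\mathbb{T}^1_T$ and to the key structural feature that the weights $\phi,\xi$ in \eqref{w8fn} depend only on $(x,t)$ and not on the normal variable $z$. Writing $\alpha=\overline{\rho}/\nu>0$ and setting $w=e^{-s\phi}q$, I would compute $e^{-s\phi}(-\alpha\partial_t-\Delta)(e^{s\phi}w)$ and reorganize it as $P_1 w+P_2 w=g$, where $P_1 w$ gathers the formally self-adjoint terms ($-\Delta w-s^2|\partial_x\phi|^2 w$ together with a chosen zeroth-order piece), $P_2 w$ the anti-adjoint ones ($-\alpha\partial_t w-2s\partial_x\phi\,\partial_x w$ plus a balancing first-order term), and $g=e^{-s\phi}f_1$ augmented by the lower-order remainders produced by $\partial_t\phi$, $\Delta\phi=\partial_{xx}\phi$ and the undifferentiated terms. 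A point worth emphasizing at the outset is that, because $\eta^0$ solves \eqref{transportweight}, one has $|\partial_t\phi|\le C\lambda\xi^{3/2}$ by \eqref{prilies}, i.e. the time derivative of the weight is genuinely subordinate; this is exactly the compatibility that will later let this estimate be coupled with the transport Carleman estimate, and here it guarantees that all $\partial_t\phi$ contributions are lower order.

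The heart of the argument is the algebraic identity $\|P_1 w\|_{L^2(Q^{ex}_T)}^2+\|P_2 w\|_{L^2(Q^{ex}_T)}^2+2(P_1 w,P_2 w)_{L^2(Q^{ex}_T)}=\|g\|_{L^2(Q^{ex}_T)}^2$, from which the whole estimate is extracted by integrating the cross term $2(P_1 w,P_2 w)$ by parts in $x$, $z$ and $t$. The dominant interior contributions come from the $x$-convexity \eqref{Positivity-Weights}: pairing $-s^2|\partial_x\phi|^2 w$ with $-2s\partial_x\phi\,\partial_x w$ and integrating by parts in $x$ yields the leading term $s^3\lambda^4\iint_{Q^{ex}_T}\xi^3|w|^2$, while pairing $-\partial_{xx}w$ with $-2s\partial_x\phi\,\partial_x w$ yields $s\lambda^2\iint_{Q^{ex}_T}\xi|\partial_x w|^2$. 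Since $\theta$, and hence $\phi$, blows up at $t=0$ and $t=T$, the factor $e^{-2s\phi}$ annihilates all temporal boundary terms, which is why neither $q_T$ nor $q(\cdot,0)$ appears in \eqref{obsnht}. Undoing the substitution $q=e^{s\phi}w$ then produces the weighted interior terms $s^3\lambda^4\iint\xi^3|q|^2$ and the tangential part of $s\lambda^2\iint\xi|\nabla q|^2$.

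The delicate point, and the place where the present weight departs from the classical one, is the normal direction together with the boundary. Since $\phi$ is independent of $z$, the $z$-derivative receives no favourable sign from the cross term (in fact pairing $-\partial_{zz}w$ with $-2s\partial_x\phi\,\partial_x w$ contributes a term $\sim s\,\partial_{xx}\phi\,|\partial_z w|^2$, which has the \emph{wrong} sign because $\partial_{xx}\phi<0$ by \eqref{Positivity-Weights}). To recover $\partial_z w$, and thus the full $|\nabla q|^2$, I would use the equation directly: multiplying $P_1 w=g-P_2 w$ by a multiplier of the form $s\lambda^2\xi w$ and integrating by parts expresses $s\lambda^2\iint\xi|\nabla w|^2$ in terms of $\|P_1 w\|$, the already-controlled term $s^3\lambda^4\iint\xi^3|w|^2$, and boundary integrals. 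The boundary conditions close this step: on $\mathbb{T}^0_T$ one has $\partial_z w=e^{-s\phi}\partial_z q=0$ (using $\partial_z\phi=0$), so those contributions vanish, while on $\mathbb{T}^1_T$ one has $\partial_z w=e^{-s\phi}f_2$; the resulting boundary integrals on $\mathbb{T}^1_T$, handled through this Neumann condition and Young's inequality, are organized so as to produce the trace term $s^2\lambda^3\iint_{\mathbb{T}^1_T}\xi^2|q|^2$ kept on the left and the source term $s\lambda\iint_{\mathbb{T}^1_T}\xi|f_2|^2$ sent to the right. I expect this recovery of the normal derivative, together with the careful signing and absorption of all the Neumann boundary integrals, to be the main obstacle; it is exactly the step that must follow the scheme of \cite{farnan}, transposed to the $z$-independent, transport-compatible weight.

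Finally, fixing $\lambda$ large and then $s$ large absorbs every lower-order remainder (the $\partial_t\phi$, $\partial_{xx}\phi$ and zeroth-order terms, all dominated by $s^3\lambda^4\xi^3$) into the leading interior terms, leaving a global lower bound with observation term $s^3\lambda^4\iint_{Q^{ex}_T}\xi^3|q|^2$. To localize this observation to $\omega_T$, I would introduce a cutoff supported in the region where $|\nabla\eta^0|$ may vanish, which by \eqref{eta*}, \eqref{eta0grtr0} and the choice $L=3\overline{u}_1 T$ in \eqref{fixL} is contained in $\omega$, and run the standard argument (testing the equation against the cutoff times $s^3\lambda^4\xi^3 e^{-2s\phi}q$) to dominate the observation away from $\omega$ by the interior terms plus $s^3\lambda^4\iint_{\omega_T}\xi^3|q|^2$. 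Collecting everything yields \eqref{obsnht}.
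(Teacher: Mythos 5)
Your route is genuinely different from the paper's: you attempt a \emph{direct} Carleman computation on the non-homogeneous Neumann problem \eqref{adjheateq}, whereas the paper only ever conjugates the \emph{homogeneous} Neumann problem (Lemma \ref{Carlman}), then builds by a penalized-HUM minimization a controlled solution $(Y,H)$ of the forward homogeneous-Neumann heat equation with the weighted bounds \eqref{inYH} — including the boundary trace bound $s^{2}\lambda^{3}\iint_{\mathbb{T}^{1}_{T}}\xi^{-1}|Y|^{2}e^{2s\phi}$ — and finally obtains \eqref{obsnht} by pairing $q$ (defined by transposition) against $Y$ with the choice $G=\xi^{3}qe^{-2s\phi}$. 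Many of your intermediate observations (the $z$-independence of $\phi$ killing the boundary terms in the conjugated cross product, the recovery of $|\partial_z q|^2$ by an energy multiplier, the localization of the observation using \eqref{eta0grtr0} and \eqref{fixL}) do correctly mirror pieces of the paper's Lemma \ref{Carlman} and of the final estimates \eqref{Est-Nabla-q}--\eqref{est-boundary}.

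However, the direct approach has a genuine gap at its core. First, with $f_{2}$ only in $L^{2}(\mathbb{T}^{1}_{T})$ the solution $q$ is a transposition solution and does not lie in $L^{2}(0,T;H^{2})\cap H^{1}(0,T;L^{2})$, so $P_1w$ and $P_2w$ are not in $L^{2}(Q^{ex}_{T})$ and the identity $\|P_1w\|^2+\|P_2w\|^2+2(P_1w,P_2w)=\|g\|^2$ cannot even be written, let alone integrated by parts; a density argument does not rescue this because the estimate you are trying to prove is exactly what would be needed to pass to the limit. Second, and more fatally, in the non-homogeneous case the cross term $(P_1w,P_2w)$ produces boundary integrals that your proposal never confronts: pairing $-\Delta w$ with $-\alpha\partial_t w$ leaves $\alpha\iint_{\mathbb{T}^{1}_{T}}e^{-s\phi}f_{2}\,\partial_t w$, and pairing $-\Delta w$ with $-2s\partial_x\phi\,\partial_x w$ leaves $2s\iint_{\mathbb{T}^{1}_{T}}\partial_x\phi\,e^{-s\phi}f_{2}\,\partial_x w$. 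These involve $\partial_t w$ and $\partial_x w$ on $\mathbb{T}^{1}_{T}$, quantities which do not appear (and cannot be made to appear with a favourable sign) on the left-hand side of \eqref{obsnht}, and which cannot be moved onto $f_{2}$ by a further integration by parts since $f_{2}$ is merely $L^{2}$. This is precisely the obstruction that forces the paper (following \cite{farnan}) into the duality argument of Lemma \ref{sublemreg}, where the non-homogeneous datum $f_{2}$ only ever meets the trace of $Y$ itself, controlled by \eqref{bndestYH}. Your energy-multiplier step for recovering $\partial_z w$ is fine on its own, but it cannot repair the failure of the main cross-product computation.
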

		Theorem \ref{carlht} will be proved mainly by using the similar line of arguments as used to prove \cite[Theorem 1]{farnan}. The difference with \cite{farnan} occurs in the construction of the weight functions. To be precise, the weight $\theta(t)$ in time is defined in \cite{farnan} as $\theta(t)=\frac{1}{t(T-t)},$ whereas in our case $\theta(t)$ is as defined in \eqref{theta}. Further unlike \cite{farnan}, the function $\eta^{0}$ (defined in \eqref{defeta0}) travels in time with a constant velocity $\overline{u}_{1}.$ Both of these differences can be classically handled just by using the estimates \eqref{prilies}, \eqref{prilies*} and \eqref{Positivity-Weights} of $\xi$ and $\phi$. One can also consult \cite{ervbad} for similar issues. 
		\\
		Above all in \cite{farnan} and in most other articles in the literature it is assumed that $\eta^{0}$ vanishes at the boundary of the domain. Of course this assumption does not serve our purpose since we are working with a beam at the boundary. In our case, $\eta^{0}$ just depends on $(x,t)$ implying in particular
		\begin{equation}\label{norder}
		\begin{array}{l}
		\displaystyle
		\partial_{z}\phi=\partial_{z}\xi=0\quad\mbox{on}\quad \mathbb{T}^{1}_{T} \cup \mathbb{T}^{0}_{T} .
		\end{array}
		\end{equation}
		Using this property, we still recover the same Carleman estimate for the heat equation with non homogeneous boundary condition obtained in \cite{farnan}.\\
		Hence without going into the details we will just sketch the main steps of the proof of Theorem \ref{carlht} and other supporting lemmas, we comment with references for their proofs.
		
		For the proof of Theorem \ref{carlht} we will need an auxiliary result: a Carleman inequality for heat equation with homogeneous Neumann boundary conditions, stated below:
		\begin{lem}\label{Carlman}
			There exist positive constants $C,$ $s_{2}\geq 1$ and $\lambda_{2} \geq 1$ such that for all $s\geqslant s_{2},$ $\lambda\geqslant \lambda_{2},$ for all ${\vartheta}_{T}\in  L^{2}({\mathbb{T}_{L}\times(0,1)}),$ and for all $f_{3}\in L^{2}({{Q^{ex}_{T}}}),$ the solution ${\vartheta}$ of the following problem 
			\begin{equation}\label{hthne}
			\left\{ \begin{array}{ll}
			\displaystyle
			-\frac{\overline{\rho}}{\nu}\partial_{t}{\vartheta}-\Delta {\vartheta}=f_{3}\quad&\mbox{in}\quad {{Q^{ex}_{T}}},\\[2.mm]
			\displaystyle
			\partial_{z}{\vartheta}=0\quad&\mbox{on}\quad{\mathbb{T}^{1}_{T}}\cup {\mathbb{T}^{0}_{T}},\\
			\displaystyle
			{\vartheta}(.,T)={\vartheta}_{T}\quad & \mbox{in}\quad {\mathbb{T}_{L}\times(0,1)},
			\end{array}\right.
			\end{equation}
			satisfies
			\begin{equation}\label{hthne1}
			\begin{array}{l}
			\displaystyle\iint\limits_{{{Q^{ex}_{T}}}}e^{-2s\phi}\left(s\lambda^{2}{\xi}|\nabla{\vartheta}|^{2}+s^{3}\lambda^{4}{\xi}^{3}|{\vartheta}|^{2}\right)\\
			\leqslant \displaystyle C\left(\iint\limits_{{{Q^{ex}_{T}}}}e^{-2s\phi}|f_{3}|^{2}+s^{3}\lambda^{4}\iint\limits_{{\omega_{T}}}e^{-2s\phi}{\xi}^{3}|{\vartheta}|^{2} \right).
			\end{array}
			\end{equation}
		\end{lem}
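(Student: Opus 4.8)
The plan is to establish \eqref{hthne1} by the classical Fursikov--Imanuvilov conjugation method, adapted to the fact that here the weight does not vanish on the lateral boundaries but instead satisfies the tangency property \eqref{norder}. First I would conjugate: set $w=e^{-s\phi}\vartheta$, so $\vartheta=e^{s\phi}w$, and substitute into \eqref{hthne}. Expanding $\nabla\vartheta=e^{s\phi}(\nabla w+s\nabla\phi\,w)$ and $\Delta\vartheta=e^{s\phi}(\Delta w+2s\nabla\phi\cdot\nabla w+s^{2}|\nabla\phi|^{2}w+s\Delta\phi\,w)$, and multiplying the equation by $e^{-s\phi}$, I obtain $Pw=g$ in $Q^{ex}_{T}$ with $g=e^{-s\phi}f_{3}$ and $Pw=-\frac{\overline{\rho}}{\nu}\partial_{t}w-\Delta w-2s\nabla\phi\cdot\nabla w-s^{2}|\nabla\phi|^{2}w-(\frac{\overline{\rho}}{\nu}s\partial_{t}\phi+s\Delta\phi)w$. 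The decisive preliminary observation is that, since $\partial_{z}\vartheta=e^{s\phi}(\partial_{z}w+s\partial_{z}\phi\,w)$, the homogeneous Neumann condition $\partial_{z}\vartheta=0$ together with $\partial_{z}\phi=0$ on $\mathbb{T}^{1}_{T}\cup\mathbb{T}^{0}_{T}$ (property \eqref{norder}) yields the clean transformed condition $\partial_{z}w=0$ on the same boundaries.

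Next I would split $Pw=P_{1}w+P_{2}w+Rw$ into its formally self-adjoint part $P_{1}$ (gathering $-\Delta w-s^{2}|\nabla\phi|^{2}w$), its formally skew-adjoint part $P_{2}$ (gathering $-\frac{\overline{\rho}}{\nu}\partial_{t}w-2s\nabla\phi\cdot\nabla w$), and a remainder $R$ of lower order in $s,\lambda$, controlled by \eqref{prilies} and \eqref{prilies*}. Taking $L^{2}(Q^{ex}_{T})$ norms gives $\|P_{1}w\|^{2}+\|P_{2}w\|^{2}+2(P_{1}w,P_{2}w)=\|g-Rw\|^{2}$, and the whole point is that the cross term $(P_{1}w,P_{2}w)$, after integration by parts, produces the dominant positive quantities $s^{3}\lambda^{4}\iint\xi^{3}|\nabla\eta^{0}|^{2}|w|^{2}$ and $s\lambda^{2}\iint\xi|\nabla\eta^{0}|^{2}|\nabla w|^{2}$, up to boundary terms and lower-order contributions. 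Since $\eta^{0}$ depends only on $(x,t)$ we have $|\nabla\eta^{0}|^{2}=(\partial_{x}\eta^{0})^{2}$, and the positivity \eqref{Positivity-Weights}, equivalently \eqref{eta0grtr0}, bounds this below on $[-\overline{u}_{1}T,d+\overline{u}_{1}T]\times(0,T)$; fixing $\lambda\geq\lambda_{2}$ first and then $s\geq s_{2}$ large absorbs the remainder $Rw$ and all lower-order terms into the left-hand side.

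The boundary terms are where the tangency property does the essential work, and I expect this to be the main obstacle. Each integration by parts in the $z$-variable produces integrals over $\mathbb{T}^{1}_{T}\cup\mathbb{T}^{0}_{T}$ carrying a factor of either $\partial_{z}w$ (which vanishes by the transformed Neumann condition) or $\partial_{z}\phi$, $\partial_{z}\xi$ (which vanish by \eqref{norder}); hence, unlike the classical situation treated in \cite{farnan} where the weight vanishes on the boundary and one must track the sign of its normal derivative, here every lateral boundary term either vanishes identically or is of strictly lower order and thus absorbable, while periodicity in $x$ kills all contributions in the $x$-direction. Verifying carefully that each of the numerous boundary integrals really does carry one of these vanishing factors is the genuinely non-standard bookkeeping step, and it is exactly here that the choice of a purely $(x,t)$-dependent weight pays off.

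It remains to handle the one region where $|\nabla\eta^{0}|$ may degenerate. By the construction \eqref{eta*} of $\eta$ and the travelling definition \eqref{defeta0}, this degeneracy set lies, for every $t\in[0,T]$, inside the control set $\omega$ (this is precisely the reason for the choice $L=3\overline{u}_{1}T$, cf. Remark \ref{roleLst}); there I would insert a smooth cutoff equal to $1$ on the degenerate region and supported in $\omega$, estimate the missing interior mass of $|w|^{2}$ by $s^{3}\lambda^{4}\iint_{\omega_{T}}\xi^{3}|w|^{2}$, and absorb the corresponding gradient contribution via the standard local energy integration by parts into the dominant gradient term plus the same observation integral. Finally, undoing the conjugation through $\nabla w=e^{-s\phi}(\nabla\vartheta-s\nabla\phi\,\vartheta)$ converts the weighted gradient and zeroth-order terms in $w$ into $e^{-2s\phi}(s\lambda^{2}\xi|\nabla\vartheta|^{2}+s^{3}\lambda^{4}\xi^{3}|\vartheta|^{2})$ up to absorbable errors, and turns $\iint_{\omega_{T}}\xi^{3}|w|^{2}$ into $\iint_{\omega_{T}}e^{-2s\phi}\xi^{3}|\vartheta|^{2}$, yielding exactly \eqref{hthne1}.
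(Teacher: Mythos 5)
Your proposal follows essentially the same route as the paper: conjugate with $e^{-s\phi}$, split the conjugated operator into two parts plus a lower-order remainder, expand the cross term, use the tangency property \eqref{norder} (weight independent of $z$) so that all lateral boundary terms vanish, invoke \eqref{eta0grtr0} for positivity outside the observation region, remove the gradient observation by a local cutoff argument, and undo the conjugation. The only cosmetic differences are your strictly symmetric/skew-symmetric grouping versus the paper's standard Fursikov--Imanuvilov grouping, and a slip where the dominant zeroth-order term should carry $|\nabla\eta^{0}|^{4}$ rather than $|\nabla\eta^{0}|^{2}$; neither affects the argument.
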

		Lemma \ref{Carlman} will allow to construct by duality suitable solutions to a control problem:
		\begin{lem}\label{sublemreg}
			There exist positive constants $C>0,$ $s_{3} \geq 1$ and $\lambda_{3} \geq 1,$ such that for all $s\geqslant s_{3},$ $\lambda\geqslant\lambda_{3}$ and for all $G$ satisfying
			\begin{equation}\label{condG}
			\begin{array}{l}
			\displaystyle\iint\limits_{{{Q^{ex}_{T}}}}\xi^{-3}|G|^{2}e^{2s\phi}<\infty,
			\end{array}
			\end{equation}
			there exists a solution $(Y,H)$ of the following control problem 
			\begin{equation}\label{strng}
			\left\{ \begin{array}{ll}
			\displaystyle\frac{\overline{\rho}}{\nu}\partial_{t}Y-\Delta Y=G+H\chi_{\omega}&\quad\mbox{in}\quad {{Q^{ex}_{T}}},\\
			\displaystyle{\partial_{z} Y}=0&\quad\mbox{on}\quad {\mathbb{T}^{1}_{T}}\cup\mathbb{T}^{0}_{T},\\
			Y(\cdot,0)=0&\quad\mbox{in}\quad {\mathbb{T}_{L}\times(0,1)},\\
			Y(\cdot,T)=0&\quad \mbox{in}\quad \mathbb{T}_{L}\times(0,1),
			\end{array}\right.
			\end{equation}
			which satisfies
			$$
			Y\in L^{2}(0,T;H^{2}(\mathbb{T}_{L}\times(0,1)))\cap H^{1}(0,T;L^{2}(\mathbb{T}_{L}\times(0,1)))\cap C^{0}([0,T];H^{1}(\mathbb{T}_{L}\times(0,1))).
			$$ 
			and  the following estimate:
			\begin{equation}\label{inYH}
			\begin{array}{ll}
			&\displaystyle s^{3}\lambda^{4}\iint\limits_{{{Q^{ex}_{T}}}}|Y|^{2}e^{2s\phi}+s\lambda^{2}\iint\limits_{{{Q^{ex}_{T}}}}\xi^{-2}|\nabla Y|^{2}e^{2s\phi}
			\\
			&\displaystyle+s^{2}\lambda^{3}\iint\limits_{{\mathbb{T}^{1}_{T}}}\xi^{-1}|Y|^{2}e^{2s\phi}+\iint\limits_{{\omega_{T}}}\xi^{-3}|H|^{2}e^{2s\phi}\leqslant C\iint\limits_{{{Q^{ex}_{T}}}}\xi^{-3}|G|^{2}e^{2s\phi}.
			\end{array}
			\end{equation}
		\end{lem}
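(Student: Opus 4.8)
The plan is to obtain $(Y,H)$ by the Fursikov--Imanuvilov duality method, using the Carleman estimate of Lemma~\ref{Carlman} for the homogeneous Neumann adjoint problem as the coercivity ingredient. Write $P^{\ast}w := -\frac{\overline{\rho}}{\nu}\partial_t w-\Delta w$ for the operator appearing in \eqref{hthne}, and let $P_0$ be the space of functions $w\in C^{\infty}(\overline{Q^{ex}_T})$ satisfying $\partial_z w=0$ on $\mathbb{T}^{1}_T\cup\mathbb{T}^{0}_T$. On $P_0$ I introduce the bilinear form
\begin{equation*}
a(w,\tilde w)=\iint_{Q^{ex}_T}e^{-2s\phi}(P^{\ast}w)(P^{\ast}\tilde w)+s^{3}\lambda^{4}\iint_{\omega_T}e^{-2s\phi}\xi^{3}\,w\,\tilde w .
\end{equation*}
By Lemma~\ref{Carlman} applied with $f_3=P^{\ast}w$, $a(w,w)$ controls $\iint_{Q^{ex}_T}e^{-2s\phi}(s\lambda^{2}\xi|\nabla w|^{2}+s^{3}\lambda^{4}\xi^{3}|w|^{2})$ for $s\geq s_2$, $\lambda\geq\lambda_2$; in particular $a(\cdot,\cdot)$ is a scalar product on $P_0$, and I denote by $P$ the completion of $P_0$ for the induced norm. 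The map $w\mapsto\langle\ell,w\rangle:=\iint_{Q^{ex}_T}Gw$ is continuous on $P$, since splitting the weight and using Cauchy--Schwarz gives
\begin{equation*}
|\langle\ell,w\rangle|\leq\Big(s^{-3}\lambda^{-4}\iint_{Q^{ex}_T}\xi^{-3}e^{2s\phi}|G|^{2}\Big)^{1/2}\Big(s^{3}\lambda^{4}\iint_{Q^{ex}_T}\xi^{3}e^{-2s\phi}|w|^{2}\Big)^{1/2},
\end{equation*}
where the second factor is bounded by $C\,a(w,w)^{1/2}$ by the Carleman estimate and the first is finite by \eqref{condG}. Lax--Milgram then yields a unique $\hat w\in P$ with $a(\hat w,w)=\langle\ell,w\rangle$ for all $w\in P$, together with $a(\hat w,\hat w)\leq \frac{C}{s^{3}\lambda^{4}}\iint_{Q^{ex}_T}\xi^{-3}e^{2s\phi}|G|^{2}$.

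I then set $Y:=e^{-2s\phi}P^{\ast}\hat w$ and $H:=-s^{3}\lambda^{4}e^{-2s\phi}\xi^{3}\hat w$. The variational identity reads $\iint_{Q^{ex}_T}Y(P^{\ast}w)-\iint_{\omega_T}Hw=\iint_{Q^{ex}_T}Gw$ for all $w\in P_0$; integrating by parts and using that the test functions satisfy $\partial_z w=0$ but are free at $t=0$ and $t=T$, this is exactly the weak form of \eqref{strng}, forcing $\frac{\overline{\rho}}{\nu}\partial_t Y-\Delta Y=G+H\chi_\omega$, the Neumann condition $\partial_z Y=0$ on $\mathbb{T}^{1}_T\cup\mathbb{T}^{0}_T$, and the endpoint conditions $Y(\cdot,0)=0$, $Y(\cdot,T)=0$ (the latter being reinforced by the blow-up of $e^{2s\phi}$ at the endpoints, which also guarantees the finiteness of the weighted integrals). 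Substituting the definitions of $Y$ and $H$ into the bound on $a(\hat w,\hat w)$ immediately gives $s^{3}\lambda^{4}\iint_{Q^{ex}_T}|Y|^{2}e^{2s\phi}+\iint_{\omega_T}\xi^{-3}|H|^{2}e^{2s\phi}\leq C\iint_{Q^{ex}_T}\xi^{-3}|G|^{2}e^{2s\phi}$, which are the first and last terms of \eqref{inYH}.

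The remaining two terms of \eqref{inYH}, the weighted gradient bound $s\lambda^{2}\iint_{Q^{ex}_T}\xi^{-2}|\nabla Y|^{2}e^{2s\phi}$ and the trace bound $s^{2}\lambda^{3}\iint_{\mathbb{T}^{1}_T}\xi^{-1}|Y|^{2}e^{2s\phi}$, are not produced by the duality and constitute the \emph{main obstacle}. For these I would perform a weighted energy estimate on the equation satisfied by $Y$: multiply $\frac{\overline{\rho}}{\nu}\partial_t Y-\Delta Y=G+H\chi_\omega$ by $s\lambda^{2}\xi^{-2}e^{2s\phi}Y$, integrate over $Q^{ex}_T$, and integrate by parts in space and time. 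The Neumann condition kills the boundary contribution $\int\partial_z Y\,(\cdots)$, the good term $\iint\xi^{-2}e^{2s\phi}|\nabla Y|^{2}$ appears with the correct sign, the time-boundary terms vanish since $Y$ vanishes at $t=0,T$, and all commutators coming from differentiating the weight $\xi^{-2}e^{2s\phi}$ are absorbed using the pointwise bounds \eqref{prilies}, \eqref{prilies*} together with the already-established weighted $L^{2}$ bound on $Y$ and the control on $G$. The trace term is then recovered from the same computation by a one-dimensional trace/interpolation inequality in the $z$ variable, again dominated by the weighted gradient and $L^{2}$ norms of $Y$. The delicate points are the careful bookkeeping of the powers of $s$ and $\lambda$ so that every remainder is genuinely absorbed for $s,\lambda$ large, and the justification of the integrations by parts (first on the smooth elements of $P_0$, then by a limiting argument). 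Finally, since the weighted bounds force $G+H\chi_\omega\in L^{2}(Q^{ex}_T)$ with $Y$ vanishing at $t=0$, classical maximal parabolic regularity for the Neumann heat problem yields $Y\in L^{2}(0,T;H^{2})\cap H^{1}(0,T;L^{2})$ and, by interpolation, $Y\in C^{0}([0,T];H^{1})$, which completes the proof.
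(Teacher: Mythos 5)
Your proposal is correct and follows essentially the same route as the paper: a penalized HUM/Fursikov--Imanuvilov duality built on Lemma~\ref{Carlman} (the paper minimizes the convex functional $J$ rather than invoking Lax--Milgram, but the Euler--Lagrange equation is your variational identity and the definitions of $Y$ and $H$ coincide), followed by the same weighted energy estimate with multiplier $s\lambda^{2}\xi^{-2}e^{2s\phi}Y$ for the gradient term. For the boundary term the paper implements your ``trace inequality in $z$'' explicitly via the vector field $\kappa=(0,z)$ and the identity $\partial_{z}(\xi^{-1}e^{2s\phi})=0$, which is exactly the mechanism that makes the weights $s^{2}\lambda^{3}\xi^{-1}$ the geometric mean of $s^{3}\lambda^{4}$ and $s\lambda^{2}\xi^{-2}$, as your bookkeeping requires.
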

		
		The proof of Theorem \ref{carlht} then follows from Lemma \ref{sublemreg} by duality. 
		
		Details of the proof are given hereafter. Section \ref{Subsec-Lem-Carlman} is dedicated to the proof of Lemma \ref{Carlman}, Section \ref{Subsec-Lem-Sublemreg} to the proof of Lemma \ref{sublemreg}, and Section \ref{Subsubsec-Thm-carlht} to the proof of Theorem \ref{carlht}.
		
		In Section \ref{Subsubsec-Add-Cor}, we prove an additional result, which is a corollary of Theorem \ref{carlht}:
		
		\begin{corollary}\label{carlht**}
			There exist positive constants ${C},$ ${s}_{2}(>s_{1})$ and ${\lambda}_{2}(>\lambda_{1})$ (where $s_{1}$ and $\lambda_{1}$ are the constants as in Theorem \ref{carlht}) such that if all the assumptions of Theorem \ref{carlht} satisfied and if $s\geqslant{s}_{2}$ and $\lambda\geqslant{\lambda}_{2},$ then the solution $q$ of \eqref{adjheateq} satisfies the following inequality
			\begin{equation}\label{obsnht***}
			\begin{split}
			\displaystyle
			&\iint\limits_{{{Q^{ex}_{T}}}}e^{-2s\phi}(\frac{1}{s \xi}|\nabla q|^{2}+s\lambda^{2}{\xi}|q|^{2})
			+\lambda\iint\limits_{{\mathbb{T}^{1}_{T}}}e^{-2s\phi}|q|^{2}\\
			&\leqslant C\left( \frac{1}{s^{2}\lambda^{2}}\iint\limits_{{{Q^{ex}_{T}}}}e^{-2s\phi}\frac{1}{\xi^{2}}|f_{1}|^{2}+\frac{1}{s \lambda}\iint\limits_{{\mathbb{T}^{1}_{T}}}e^{-2s\phi}\frac{1}{\xi}|f_{2}|^{2}
			\displaystyle
			+s\lambda^{2}\iint\limits_{{\omega_{T}}}e^{-2s\phi}{\xi}|q|^{2}
			\right),
			\end{split}
			\end{equation}
			where the notations $Q^{ex}_{T},$ $\mathbb{T}^{1}_{T}$ are introduced in \eqref{inshrthnd}.
		\end{corollary}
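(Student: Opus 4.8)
The plan is to deduce \eqref{obsnht***} from the Carleman estimate \eqref{obsnht} of Theorem \ref{carlht} by applying the latter not to $q$ but to the rescaled function $\widetilde{q}:=\xi^{-1}q$. The two extra negative powers of $\xi$ together with the prefactor $s^{-2}\lambda^{-2}$ that distinguish \eqref{obsnht***} from \eqref{obsnht} will arise precisely from this conjugation, followed by a division by the constant $s^{2}\lambda^{2}$.

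First I would record the system solved by $\widetilde{q}$. Since $\xi=\theta(t)e^{\lambda(\eta^{0}+4\|\eta^{0}\|_{\infty})}$ is independent of $z$, we have $\partial_{z}\xi\equiv 0$ (in particular \eqref{norder} holds), so the boundary conditions transform cleanly: $\partial_{z}\widetilde{q}=\xi^{-1}\partial_{z}q=\xi^{-1}f_{2}$ on $\mathbb{T}^{1}_{T}$ and $\partial_{z}\widetilde{q}=0$ on $\mathbb{T}^{0}_{T}$. For the interior equation, a direct computation from $-\frac{\overline{\rho}}{\nu}\partial_{t}q-\Delta q=f_{1}$ gives
$$-\frac{\overline{\rho}}{\nu}\partial_{t}\widetilde{q}-\Delta\widetilde{q}=\widetilde{f}_{1},\qquad \widetilde{f}_{1}:=\xi^{-1}f_{1}+\frac{\overline{\rho}}{\nu}\xi^{-2}(\partial_{t}\xi)\,q+2\xi^{-2}(\partial_{x}\xi)\,\partial_{x}q-\big(\partial_{xx}(\xi^{-1})\big)q,$$
while the terminal datum becomes $\widetilde{q}(\cdot,T)=0$, since $\xi$ blows up at $t=T$. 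Hence $\widetilde{q}$ again solves a system of the form \eqref{adjheateq}, with source $\widetilde{f}_{1}\in L^{2}(Q^{ex}_{T})$ and Neumann datum $\xi^{-1}f_{2}\in L^{2}(\mathbb{T}^{1}_{T})$ (one may assume the right-hand side of \eqref{obsnht***} finite, otherwise there is nothing to prove), so Theorem \ref{carlht} applies to $\widetilde{q}$.

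Next I would feed $\widetilde{q}$ into \eqref{obsnht} and translate everything back to $q$. Using the weight bounds \eqref{prilies*}, namely $|\partial^{(i)}_{x}\xi|\leq C\lambda^{i}\xi$ and $|\partial_{t}\xi|\leq C\lambda\xi^{3/2}$, whence $|\partial_{xx}(\xi^{-1})|\leq C\lambda^{2}\xi^{-1}$, the commutator part of $\widetilde{f}_{1}$ obeys
$$\big|\widetilde{f}_{1}-\xi^{-1}f_{1}\big|^{2}\leqslant C\lambda^{2}\,\xi^{-1}|q|^{2}+C\lambda^{2}\,\xi^{-2}|\partial_{x}q|^{2}+C\lambda^{4}\,\xi^{-2}|q|^{2}.$$
Writing $q=\xi\widetilde{q}$ and $\nabla q=\xi\nabla\widetilde{q}+(\partial_{x}\xi)\widetilde{q}\,\vec{e}_{1}$, each term is, after multiplication by $e^{-2s\phi}$ and integration, strictly subordinate to the two leading left-hand terms $s\lambda^{2}\xi|\nabla\widetilde{q}|^{2}$ and $s^{3}\lambda^{4}\xi^{3}|\widetilde{q}|^{2}$ of \eqref{obsnht} (the comparison ratios are of order $1/(s\xi)$ and $1/(s^{3}\xi^{3})$). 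Enlarging the thresholds to some $s_{2}>s_{1}$, $\lambda_{2}>\lambda_{1}$, all commutator contributions are thus absorbed into the left-hand side. On the left, the same identities let me recover $\tfrac{1}{s\xi}|\nabla q|^{2}$ from $\tfrac{1}{s^{2}\lambda^{2}}\cdot s\lambda^{2}\xi|\nabla\widetilde{q}|^{2}$ up to a term of size $\tfrac{\lambda^{2}}{s}\xi^{-1}|q|^{2}$, which is in turn controlled by the good interior term $s\lambda^{2}\xi|q|^{2}$ already estimated; the interior and boundary $|q|^{2}$ terms, the localized term over $\omega_{T}$, and the two source terms in $f_{1},f_{2}$ then match \eqref{obsnht***} term by term once the factors $\xi^{\pm1}$ coming from $\widetilde{q}=\xi^{-1}q$ are accounted for. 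Dividing the resulting inequality by $s^{2}\lambda^{2}$ yields \eqref{obsnht***}.

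The only genuinely delicate point is the bookkeeping in the absorption step: I must verify that every commutator produced by conjugating the heat operator with $\xi^{-1}$ carries strictly fewer powers of $s$ and $\xi$ than the corresponding leading term of \eqref{obsnht}, so that it can be moved to the left for $s,\lambda$ large. This is exactly what the sharp weight estimates \eqref{prilies*} guarantee, and it is the reason the corollary requires the larger constants $s_{2}>s_{1}$ and $\lambda_{2}>\lambda_{1}$. Everything else reduces to routine algebra of powers of $s$, $\lambda$ and $\xi$.
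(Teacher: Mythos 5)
Your proposal is correct and follows essentially the same route as the paper: the paper's own proof introduces $q_{1}=\frac{1}{s\lambda\xi}q$, applies Theorem \ref{carlht} to the system satisfied by $q_{1}$, bounds the commutator terms via \eqref{prilies*}, and absorbs them for $s,\lambda$ large, which is exactly your argument up to the immaterial constant factor $\frac{1}{s\lambda}$ in the rescaling (you instead divide by $s^{2}\lambda^{2}$ at the end). The commutator bounds and absorption ratios you state are the same as the paper's estimate of $\mathcal{F}_{5}$ in \eqref{estF5}.
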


		\subsubsection{Proof of Lemma \ref{Carlman}}\label{Subsec-Lem-Carlman}
		
		We set
		\begin{equation}\label{Theta}
		\begin{split}
		\displaystyle{\vartheta_{1}}=e^{-s\phi}{\vartheta}.
		\end{split}
		\end{equation}
		In view of \eqref{norder} one observes that 
		\begin{equation}\label{bTheta}
		\begin{array}{l}
		\displaystyle
		\partial_{z}{\vartheta_{1}}=0\quad\mbox{on}\quad {\mathbb{T}^{1}_{T}}\cup \mathbb{T}^{0}_{T}.
		\end{array}
		\end{equation}
		Besides, with $f_{3}$ as in \eqref{hthne}, ${\vartheta_{1}}$ satisfies
		$$e^{-s\phi}f_{3}=e^{-s\phi}(-\frac{\bar{\rho}}{\nu}\partial_{t}{\vartheta}-\Delta {\vartheta})=e^{-s\phi}(-\frac{\bar{\rho}}{\nu}\partial_{t}(e^{s\phi}{\vartheta_{1}})-\Delta(e^{s\phi}{\vartheta_{1}}))=\mathcal{P}_{\phi}{\vartheta_{1}},$$
		where the operator $\mathcal{P}_{\phi}$ can be written as
		$$\mathcal{P}_{\phi}=\mathcal{P}_{1}{\vartheta_{1}}+\mathcal{P}_{2}{\vartheta_{1}}+\mathcal{R}{\vartheta_{1}},$$
		where
		\begin{equation}\label{P12R}
		\begin{array}{l}
		\displaystyle\mathcal{P}_{1}{\vartheta_{1}}=-\frac{\bar{\rho}}{\nu}\partial_{t}{\vartheta_{1}}+2s\lambda{\xi}\nabla{{\eta^{0}}}\cdot\nabla{\vartheta_{1}}+2s\lambda^{2}|\nabla{{\eta^{0}}}|^{2}{\xi}{\vartheta_{1}},\\
		\displaystyle\mathcal{P}_{2}{\vartheta_{1}}=-\Delta{\vartheta_{1}}-\frac{\bar{\rho}}{\nu}s\partial_{t}\phi{\vartheta_{1}}-s^{2}\lambda^{2}{\xi}^{2}|\nabla{{\eta^{0}}}|^{2}{\vartheta_{1}},\\
		\displaystyle\mathcal{R}{\vartheta_{1}}=-s\lambda\Delta{{\eta^{0}}}{\xi}{\vartheta_{1}}+s\lambda^{2}|\nabla{{\eta^{0}}}|^{2}{\xi}{\vartheta_{1}}.
		\end{array}
		\end{equation}
		We then use that $\mathcal{P}_{1}{\vartheta_{1}}+\mathcal{P}_{2}{\vartheta_{1}}=f_{3}e^{-s\phi}-\mathcal{R}{\vartheta_{1}}$ and then
		\begin{equation}\label{tinq}
		\begin{split}
		\displaystyle
		\iint\limits_{{Q^{ex}_{T}}}|\mathcal{P}_{1}{\vartheta_{1}}|^{2}+\iint\limits_{{Q^{ex}_{T}}}|\mathcal{P}_{2}{\vartheta_{1}}|^{2}+2\iint\limits_{{Q^{ex}_{T}}}\mathcal{P}_{1}{\vartheta_{1}}\mathcal{P}_{2}{\vartheta_{1}}&=\iint\limits_{{Q^{ex}_{T}}}|f_{3}e^{-s\phi}-\mathcal{R}{\vartheta_{1}}|^{2}\\
		&\leqslant 2\iint\limits_{{Q^{ex}_{T}}}|f_{3}|^{2}e^{-2s\phi}+2\iint\limits_{{Q^{ex}_{T}}}|\mathcal{R}{\vartheta_{1}}|^{2}.
		\end{split}
		\end{equation}
		We now compute the scalar product of $\mathcal{P}_{1}{\vartheta_{1}}$ with $\mathcal{P}_{2}{\vartheta_{1}}$. In fact, these computations are very similar to the classical ones, and one should only remark that the integrations by parts do not yield any bad terms on the boundaries of the domain.
		\\
		We shall write $LO.T.$ to design lower order terms, that is terms which can be bounded as follows: 
		$$
		|L.O.T| \leq 
		C \left(\frac{1}{s} + \frac{1}{\lambda} \right)\iint\limits_{{{Q^{ex}_{T}}}}e^{-2s\phi}\left(s\lambda^{2}{\xi}|\nabla{\vartheta}|^{2}+s^{3}\lambda^{4}{\xi}^{3}|{\vartheta}|^{2}\right).
		$$
		From now on we will be frequently using the estimates \eqref{prilies}, \eqref{prilies*} and \eqref{Positivity-Weights} without mentioning them precisely all the time. This estimates will also be used to furnish the $'L.O.T'$ terms.\\
		Note in particular that we have 
		$$
		\iint\limits_{{Q^{ex}_{T}}}|\mathcal{R}{\vartheta_{1}}|^{2} = L.O.T.
		$$
		\\
		\textbf{Computations.} We write
		$$\iint\limits_{{Q^{ex}_{T}}}\mathcal{P}_{1}{\vartheta_{1}}\mathcal{P}_{2}{\vartheta_{1}}=\sum\limits_{i,j=1}^{3}{J}_{ij},$$
		where $J_{i,j}$ is the scalar product of the $i$-th term of $\mathcal{P}_{1}{\vartheta_{1}}$ with the $j$-th term of $\mathcal{P}_{2}{\vartheta_{1}}.$\\
		\textit{Computation of}\,${J}_{11}:$
		\begin{equation}\label{ti11}
		\begin{split}
		{J}_{11}=\frac{\bar{\rho}}{\nu}\iint\limits_{{Q^{ex}_{T}}} \partial_{t}{{\vartheta_{1}}}\Delta{\vartheta_{1}}&=-\frac{\bar{\rho}}{\nu}\iint\limits_{{Q^{ex}_{T}}}\partial_{t}\left(\frac{|\nabla{\vartheta_{1}}|^{2}}{2}\right)=0.\\
		\end{split}
		\end{equation}
		\textit{Computation of}\,${J}_{12}:$
		\begin{equation}\label{ti12}
		\begin{split}
		{J}_{12}&=\left(\frac{\bar{\rho}}{\nu}\right)^{2}s\iint\limits_{{Q^{ex}_{T}}}\left(\partial_{t}{\vartheta_{1}}\cdot{\vartheta_{1}}\right)\partial_{t}\phi=\frac{1}{2}\left(\frac{\bar{\rho}}{\nu}\right)^{2}s\iint\limits_{{Q^{ex}_{T}}}\partial_{t}|{\vartheta_{1}}|^{2}\partial_{t}\phi\\
		&=-\frac{1}{2}\left(\frac{\bar{\rho}}{\nu}\right)^{2}s\iint\limits_{{Q^{ex}_{T}}}|{\vartheta_{1}}|^{2}\partial_{tt}\phi = L.O.T.
		\end{split}
		\end{equation}
		\textit{Computation of}\,${J}_{13}:$
		\begin{equation}\label{ti13}
		\begin{split}
		{J}_{13} &=\frac{\bar{\rho}}{\nu}s^{2}\lambda^{2}\iint\limits_{{Q^{ex}_{T}}}{\xi}^{2}|\nabla{{\eta^{0}}}|^{2}\partial_{t}{\vartheta_{1}}\cdot{\vartheta_{1}}=\frac{\bar{\rho}}{\nu}\frac{s^{2}\lambda^{2}}{2}\iint\limits_{{Q^{ex}_{T}}}\partial_{t}|{\vartheta_{1}}|^{2}{{\xi}}^{2}|\nabla{{\eta^{0}}}|^{2}\\
		&=-\frac{\bar{\rho}}{\nu}\frac{s^{2}\lambda^{2}}{2}\iint\limits_{{Q^{ex}_{T}}}|{\vartheta_{1}}|^{2}\partial_{t}({{\xi}}^{2}|\nabla{{\eta^{0}}}|^{2})
		= L.O.T.
		\end{split}
		\end{equation}
		\textit{Computation of}\,${J}_{21}:$
		\begin{align}\label{ti21}
		{J}_{21}&=-2s\lambda\iint\limits_{{Q^{ex}_{T}}}{\xi}(\nabla{{\eta^{0}}}\cdot\nabla{\vartheta_{1}})\Delta{\vartheta_{1}}=2s\lambda\iint\limits_{{Q^{ex}_{T}}}\nabla({\xi}(\nabla{{\eta^{0}}}\cdot\nabla{\vartheta_{1}}))\cdot\nabla{\vartheta_{1}}\notag\\
		&=2s\lambda^{2}\iint\limits_{{Q^{ex}_{T}}}{\xi}|\nabla{{\eta^{0}}}\cdot\nabla{\vartheta_{1}}|^{2}+2s\lambda\iint\limits_{{Q^{ex}_{T}}}{\xi}(\nabla^{2}{{\eta^{0}}}\cdot\nabla{\vartheta_{1}})\cdot\nabla{\vartheta_{1}}+s\lambda\iint\limits_{{Q^{ex}_{T}}}{\xi}\nabla{{\eta^{0}}}\cdot\nabla|\nabla{\vartheta_{1}}|^{2}\notag\\
		&=2s\lambda^{2}\iint\limits_{{Q^{ex}_{T}}}{\xi}|\nabla{{\eta^{0}}}\cdot\nabla{\vartheta_{1}}|^{2}+2s\lambda\iint\limits_{{Q^{ex}_{T}}}{\xi}(\nabla^{2}{{\eta^{0}}}\cdot\nabla{\vartheta_{1}})\cdot\nabla{\vartheta_{1}}-s\lambda^{2}\iint\limits_{{Q^{ex}_{T}}}{\xi}|\nabla{{\eta^{0}}}|^{2}|\nabla{\vartheta_{1}}|^{2}\notag\\
		&\qquad-s\lambda\iint\limits_{{Q^{ex}_{T}}}{\xi}\Delta{{\eta^{0}}}|\nabla{\vartheta_{1}}|^{2}
		\\ 
		& =2s\lambda^{2}\iint\limits_{{Q^{ex}_{T}}}{\xi}|\nabla{{\eta^{0}}}\cdot\nabla{\vartheta_{1}}|^{2}-s\lambda^{2}\iint\limits_{{Q^{ex}_{T}}}{\xi}|\nabla{{\eta^{0}}}|^{2}|\nabla{\vartheta_{1}}|^{2}+L.O.T.\notag
		\end{align}
		where the third line from the second in \eqref{ti21} follows because the boundary integral $\displaystyle\iint\limits_{\mathbb{T}^{1}_{T}\cup \mathbb{T}^{0}_{T}}\xi\partial_{z}\eta^{0}|\nabla\vartheta_{1}|^{2}$ vanishes since $\eta^{0}$ is only a function of $(x,t).$\\
		\textit{Computation of}\,${J}_{22}:$
		\begin{equation}\label{ti22}
		\begin{split}
		{J}_{22}&=-2\frac{\bar{\rho}}{\nu}s^{2}\lambda\iint\limits_{{Q^{ex}_{T}}}{\xi}(\nabla{{\eta^{0}}}\cdot\nabla{\vartheta_{1}})\partial_{t}\phi{\vartheta_{1}}
		=\frac{\bar{\rho}}{\nu}s^{2}\lambda\iint\limits_{{Q^{ex}_{T}}}\mbox{div}\left(\partial_{t}\phi\nabla{{\eta^{0}}}{\xi}\right)|{\vartheta_{1}}|^{2}
		= L.O.T.
		\end{split}
		\end{equation}
		\textit{Computation of}\,${J}_{23}:$
		\begin{equation}\label{ti23}
		\begin{split}
		{J}_{23}&=-2s^{3}\lambda^{3}\iint\limits_{{Q^{ex}_{T}}}{\xi}^{3}(\nabla{{\eta^{0}}}\cdot\nabla{\vartheta_{1}})|\nabla{{\eta^{0}}}|^{2}{\vartheta_{1}}
		=s^{3}\lambda^{3}\iint\limits_{{Q^{ex}_{T}}}\mbox{div}(|\nabla{{\eta^{0}}}|^{2}\nabla{{\eta^{0}}}{\xi}^{3})|{\vartheta_{1}}|^{2}.
		\end{split}
		\end{equation}
		\textit{Computation of}\,${J}_{31}:$
		\begin{equation}\label{ti31}
		\begin{split}
		{J}_{31}&=-2s\lambda^{2}\iint\limits_{{Q^{ex}_{T}}}|\nabla{{\eta^{0}}}|^{2}{\xi}{\vartheta_{1}}\Delta{\vartheta_{1}}\\
		&=2s\lambda^{2}\iint\limits_{{Q^{ex}_{T}}}|\nabla{{\eta^{0}}}|^{2}{\xi}|\nabla{\vartheta_{1}}|^{2}+2s\lambda^{2}\iint\limits_{{Q^{ex}_{T}}}\nabla(|\nabla{{\eta^{0}}}|^{2}{\xi}){\vartheta_{1}} \cdot \nabla \vartheta_1
		\\
		&=2s\lambda^{2}\iint\limits_{{Q^{ex}_{T}}}|\nabla{{\eta^{0}}}|^{2}{\xi}|\nabla{\vartheta_{1}}|^{2}-s\lambda^{2}\iint\limits_{{Q^{ex}_{T}}}\Delta(|\nabla{{\eta^{0}}}|^{2}{\xi})|{\vartheta_{1}}|^2
		\\
		&  =2s\lambda^{2}\iint\limits_{{Q^{ex}_{T}}}|\nabla{{\eta^{0}}}|^{2}{\xi}|\nabla{\vartheta_{1}}|^{2} + L.O.T.
		\end{split}
		\end{equation}
		\textit{Computation of}\,${J}_{32}:$
		\begin{equation}\label{ti32}
		\begin{split}
		{J}_{32}&=-2s^{2}\lambda^{2}\frac{\bar{\rho}}{\nu}\iint\limits_{{Q^{ex}_{T}}}|\nabla{{\eta^{0}}}|^{2}{\xi}|{\vartheta_{1}}|^{2}\partial_{t}\phi = L.O.T.
		\end{split}
		\end{equation}
		\textit{Computation of}\,${J}_{33}:$
		\begin{equation}\label{ti33}
		\begin{split}
		{J}_{33}=-2s^{3}\lambda^{4}\iint\limits_{{Q^{ex}_{T}}}{\xi}^{3}|\nabla{{\eta^{0}}}|^{4}|{\vartheta_{1}}|^{2}.
		\end{split}
		\end{equation}
		Combining the above computations \eqref{ti11}-\eqref{ti33}, we obtain the following:
		\begin{align}
		\label{proP12}
		&\iint\limits_{{Q^{ex}_{T}}}\mathcal{P}_{1}{\vartheta_{1}}\mathcal{P}_{2}{\vartheta_{1}}
		=
		2s\lambda^{2}\iint\limits_{{Q^{ex}_{T}}}{\xi}|\nabla{{\eta^{0}}}\cdot\nabla{\vartheta_{1}}|^{2}
		+s\lambda^{2}\iint\limits_{{Q^{ex}_{T}}}{\xi}|\nabla{{\eta^{0}}}|^{2}|\nabla{\vartheta_{1}}|^{2}
		\\
		&+s^3 \lambda^3  \iint\limits_{{Q^{ex}_{T}}}|{\vartheta_{1}}|^{2}\Bigg(\mbox{div}(|\nabla{{\eta^{0}}}|^{2}\nabla{{\eta^{0}}}{\xi}^{3})-2\lambda{\xi}^{3}|\nabla{{\eta^{0}}}|^{4}\Bigg)
		+L.O.T.
		\nonumber
		\end{align}
		Now, it is not hard to check that 
		$$
		s^3 \lambda^3  \iint\limits_{{Q^{ex}_{T}}}|{\vartheta_{1}}|^{2}\Bigg(\mbox{div}(|\nabla{{\eta^{0}}}|^{2}\nabla{{\eta^{0}}}{\xi}^{3})-2\lambda{\xi}^{3}|\nabla{{\eta^{0}}}|^{4}\Bigg)\geqslant cs^3 \lambda^4 \iint\limits_{{Q^{ex}_{T}}}\xi^{3}|\nabla\eta^{0}|^{4}|\vartheta_{1}|^{2}-L.O.T.
		$$
		We thus immediately deduce from \eqref{eta0grtr0} and \eqref{tinq} that there exists $C>0$ such that for all $s$ and $\lambda$ large enough,
		\begin{equation}\label{mainest1}
		\begin{split}
		\displaystyle\iint\limits_{{Q^{ex}_{T}}}&s\lambda^{2}{\xi}|\nabla\vartheta_1|^{2}+\iint\limits_{{Q^{ex}_{T}}}s^{3}\lambda^{4}{\xi}^{3}|\vartheta_1|^{2}\\
		\displaystyle\leqslant & C\left(\iint\limits_{{Q^{ex}_{T}}}e^{-2s\phi}|f_{3}|^{2}+s^{3}\lambda^{4}\iint\limits_{{\tilde \omega_{T}}}{\xi}^{3}|\vartheta_1|^{2}+s\lambda^{2}\iint\limits_{{\tilde \omega_{T}}}\xi|\nabla\vartheta_1|^{2} \right),
		\end{split}
		\end{equation} 
		where $\tilde \omega_T = (\mathbb{T}_L \setminus [- \overline u_1 T, d+ \overline u_1 T] \times (0,1) )\times (0,T)$.
		Now there are two steps to obtain \eqref{hthne1} from \eqref{mainest1}:
		\begin{itemize}
			\item Absorbing the observation in \eqref{mainest1} involving $\nabla\vartheta_{1}$ on $\tilde \omega_T$: This can be done classically by considering a slightly larger set of observation. For instance one can follow the arguments used in \cite[p. 565]{ervbad} or \cite[p. 461]{farnan}.
			\item One should then come back to the original unknown $\vartheta$ from $\vartheta_{1}$. This can be done as it is done classically by recalling that $\vartheta = e^{s \phi} \vartheta_1$. 
		\end{itemize}
		This concludes the proof of Lemma \ref{Carlman}.

		\subsubsection{Proof of Lemma \ref{sublemreg}}\label{Subsec-Lem-Sublemreg}
		The proof of this lemma will follow the arguments used in \cite[Theorem 2.6]{ervbad} and \cite[p. 446]{farnan} with some modifications.
		\\
		In order to solve \eqref{strng} we will introduce a functional $J$ whose Euler Lagrange equation provides a solution of \eqref{strng}. For smooth functions $\vartheta$ on $\overline{{Q^{ex}_{T}}}$ with $\partial_{z}\vartheta=0$ on $\overline{\mathbb{T}^{1}_{T}}\cup\overline{\mathbb{T}^{0}_{T}},$ let us define
		\begin{equation}\label{Jfunc}
		\begin{split}
		J(\vartheta)=\frac{1}{2}\iint\limits_{{{Q^{ex}_{T}}}}|(-\frac{\overline{\rho}}{\nu}\partial_{t}-\Delta)\vartheta|^{2}e^{-2s\phi}+\frac{s^{3}\lambda^{4}}{2}\iint\limits_{{\omega_{T}}}\xi^{3}|\vartheta|^{2}e^{-2s\phi}-\iint\limits_{{{Q^{ex}_{T}}}}G\vartheta.
		\end{split}
		\end{equation} 
		We introduce  the following space
		\begin{equation}\label{comobs}
		\begin{split}
		X_{obs}=\overline{\{\vartheta\in C^{\infty}(\overline{{Q^{ex}_{T}}})\,\,\mbox{such that}\,\,\partial_{z}\vartheta=0\,\,\mbox{on}\,\,\overline{\mathbb{T}^{1}_{T}}\cup\overline{\mathbb{T}^{0}_{T}}\}}^{\|\cdot\|_{obs}},
		\end{split}
		\end{equation}
		where $\|\cdot\|_{obs}$ is the Hilbert norm defined by
		\begin{equation}\label{hilnrm}
		\begin{split}
		\|\vartheta\|^{2}_{obs}=\iint\limits_{{{Q^{ex}_{T}}}}|(-\frac{\overline{\rho}}{\nu}\partial_{t}-\Delta)\vartheta|^{2}e^{-2s\phi}+s^{3}\lambda^{4}\iint\limits_{{\omega_{T}}}\xi^{3}|\vartheta|^{2}e^{-2s\phi}.
		\end{split}
		\end{equation}
		We endow the space $X_{obs}$ with the Hilbert structure given by $\|\cdot\|_{obs}.$ Of course, the fact that $\|\cdot\|_{obs}$ is a norm follows from the Carleman estimate \eqref{hthne1}.\\
		Observe that, from the assumption \eqref{condG} and the Carleman estimate \eqref{hthne1}, one has
		\begin{equation}\label{causch}
		\begin{split}
		\displaystyle
		\left|\iint\limits_{{{Q^{ex}_{T}}}}G\vartheta\right|\leqslant C\|\vartheta\|_{obs}\left(\frac{1}{s^{3}\lambda^{4}}\iint\limits_{{{Q^{ex}_{T}}}}\xi^{-3}|G|^{2}e^{2s\phi}\right)^{1/2},
		\end{split}
		\end{equation}
		for some constant $C>0.$ Hence in view of \eqref{causch}, one observes that the functional $J$ can be uniquely extended as a continuous functional on $X_{obs}.$ We denote this extension by the notation $J$ itself. The inequality \eqref{causch} also infers the coercivity of $J$ on $X_{obs}.$ It is easy to check that $J$ is strictly convex on $X_{obs}.$ Hence $J$ admits a unique minimizer $\vartheta_{\min}$ on $X_{obs}.$\\
		We further set
		\begin{equation}\label{setYH}
		\begin{split}
		Y=(-\frac{\overline{\rho}}{\nu}\partial_{t}-\Delta)\vartheta_{\min}e^{-2s\phi}\quad\mbox{and}\quad H=-s^{3}\lambda^{4}\xi^{3}\vartheta_{\min}e^{-2s\phi}\chi_{\omega_{T}}.
		\end{split}
		\end{equation}
		From the Euler Lagrange equation of $J$ at $\vartheta_{\min},$ for all smooth functions $\vartheta$ on $\overline{{Q^{ex}_{T}}}$ such that $\partial_{z}\vartheta=0$ on $\overline{T}^{1}_{T}\cup\overline{\mathbb{T}^{0}_{T}},$
		\begin{equation}\label{eullag}
		\begin{split}
		\iint\limits_{{{Q^{ex}_{T}}}}Y(-\frac{\overline{\rho}}{\nu}\partial_{t}\vartheta-\Delta\vartheta)-\iint\limits_{{{Q^{ex}_{T}}}}G\vartheta-\iint\limits_{{{\omega}_{T}}}H\vartheta=0,
		\end{split}
		\end{equation}
		This equation easily implies that the solution $Y$ 
		of \eqref{strng}$_{(1,2,3)}$ in the sense of transposition with source term $G+ H \chi_\omega$ with $H$ given by \eqref{setYH} coincides with the function $Y$ given by \eqref{setYH} and satisfies the null controllability requirement $Y(\cdot, T) = 0$ in $\mathbb{T}_L \times (0,1)$.
		\\
		Note that, since $G\in L^{2}({Q^{ex}_{T}})$ and $H\in L^{2}({Q^{ex}_{T}}),$ 
		$$
		Y\in L^{2}(0,T;H^{2}(\mathbb{T}_{L}\times(0,1)))\cap H^{1}(0,T;L^{2}(\mathbb{T}_{L}\times(0,1)))\cap C^{0}([0,T];H^{1}(\mathbb{T}_{L}\times(0,1))).
		$$ 
		\\
		Moreover since $\vartheta_{\min}$ is the minimizer of $J$ on $X_{obs},$ using $J(\vartheta_{\min})\leqslant J(0)=0$ and \eqref{causch} one gets
		\begin{equation}\label{estYH}
		\begin{split}
		s^{3}\lambda^{4}\iint\limits_{{{Q^{ex}_{T}}}}|Y|^{2}e^{2s\phi}+\iint\limits_{{{\omega}_{T}}}\xi^{-3}|H|^{2}e^{2s\phi}\leqslant C\iint\limits_{{{Q^{ex}_{T}}}}\xi^{-3}|G|^{2}e^{2s\phi},
		\end{split}
		\end{equation} 
		for large enough values of the parameters $s$ and $\lambda.$\\
		One then follows the arguments used in proving \cite[Theorem 2.6]{ervbad}, which mainly consists in a suitable energy estimate, i.e. a multiplication of \eqref{strng} by $\xi^{-2} e^{2 s \phi} Y$, to show the following 
		\begin{equation}\label{finalSt3}
		\begin{array}{l}
		\displaystyle s\lambda^{2}\iint\limits_{{{Q^{ex}_{T}}}}\xi^{-2}|\nabla Y|^{2}e^{2s\phi}
		\leqslant C\iint\limits_{{{Q^{ex}_{T}}}}\xi^{-3}|G|^{2}e^{2s\phi},
		\end{array}
		\end{equation}
		for large enough values of the parameters $s$ and $\lambda.$\\
		In order to finish proving \eqref{inYH} one only needs to show
		\begin{equation}\label{bndestYH}
		\begin{array}{l}
		\displaystyle s^{2}\lambda^{3}\iint\limits_{{\mathbb{T}^{1}_{T}}}\xi^{-1}|Y|^{2}e^{2s\phi}\leqslant C\iint\limits_{{{Q^{ex}_{T}}}}\xi^{-3}|G|^{2}e^{2s\phi},
		\end{array}
		\end{equation}
		for large enough values of the parameters $s$ and $\lambda.$\\
		This can be achieved thanks to the estimates \eqref{estYH} and \eqref{finalSt3}. In this direction we introduce a function 
		$$\kappa= \kappa (x,z)=\begin{pmatrix}
		0\\z
		\end{pmatrix}$$
		on $\mathbb{T}_{L}\times(0,1).$ The function $\kappa$ verifies 
		\begin{equation}\label{rsatkappa}
		\kappa\cdot n=\left\{ \begin{array}{l}
		1\,\,\mbox{on}\,\,\mathbb{T}_{L}\times\{1\},\\
		0\,\,\mbox{on}\,\, \mathbb{T}_{L}\times\{0\},
		\end{array}\right.
		\end{equation}
		where $n$ denotes the unit outward normal to $\mathbb{T}_{L}\times\{0,1\}.$\\
		Now we perform the following calculations, using that $\partial_z (\xi^{-1}e^{2s\phi}) = 0$:
		\begin{equation}\label{1ststpYb}
		\begin{split}
		s^{2}\lambda^{3}\iint\limits_{{\mathbb{T}^{1}_{T}}}\xi^{-1}|Y|^{2}e^{2s\phi}&=s^{2}\lambda^{3}\iint\limits_{{{Q^{ex}_{T}}}}\mbox{div}(\kappa\xi^{-1}|Y|^{2}e^{2s\phi})
		\\
		&
		=s^{2}\lambda^{3}\iint\limits_{{{Q^{ex}_{T}}}}\mbox{div}(\kappa\xi^{-1}e^{2s\phi}) |Y|^{2}
		+ 
		2 s^{2}\lambda^{3}\iint\limits_{{{Q^{ex}_{T}}}} \kappa\xi^{-1}e^{2s\phi} Y \cdot \nabla Y
		\\ 
		&
		=s^{2}\lambda^{3}\iint\limits_{{{Q^{ex}_{T}}}}\xi^{-1}e^{2s\phi} |Y|^{2}+
		2 s^{2}\lambda^{3}\iint\limits_{{{Q^{ex}_{T}}}} \kappa\xi^{-1}e^{2s\phi} Y \cdot \nabla Y 
		\\
		& \leq
		C \left( s^{3}\lambda^{4}\iint\limits_{{{Q^{ex}_{T}}}}|Y|^{2}e^{2s\phi} +s\lambda^{2}\iint\limits_{{{Q^{ex}_{T}}}} \xi^{-2}|\nabla Y|^{2}e^{2s\phi}\right).
		\end{split}
		\end{equation}
		Finally combining the estimates \eqref{estYH}, \eqref{finalSt3} and \eqref{1ststpYb} we prove \eqref{bndestYH}. Hence we conclude the proof of the estimate \eqref{inYH}.
		
		\subsubsection{Proof of Theorem \ref{carlht}}\label{Subsubsec-Thm-carlht}
		
		Since we only assume that $f_{1}\in{Q^{ex}_{T}}$, $f_{2}\in L^{2}(\mathbb{T}^{1}_{T}),$  the solution of the problem \eqref{adjheateq} has to be considered in the sense of transposition.
		%
		In particular, for 
		$$
		Y\in L^{2}(0,T;H^{2}(\mathbb{T}_{L}\times(0,1)))\cap H^{1}(0,T;L^{2}(\mathbb{T}_{L}\times(0,1)))\cap C^{0}([0,T];H^{1}(\mathbb{T}_{L}\times(0,1))), 
		$$ 
		satisfying $\partial_z Y = 0$ on  $\mathbb{T}_T^1 \cup \mathbb{T}_T^0$ with $Y(\cdot, 0) = Y(\cdot, T) = 0$ in $\mathbb{T}_L \times (0,1)$, we have
		\begin{equation}\label{muYq}
		\iint\limits_{{{Q^{ex}_{T}}}}f_{1}Y=\iint\limits_{{{Q^{ex}_{T}}}}(\frac{\overline{\rho}}{\nu}{\partial_{t}Y-\Delta Y})q-\iint\limits_{{\mathbb{T}^{1}_{T}}}Yf_{2}
		\end{equation}
		We thus choose a particular function $G$ which satisfies \eqref{condG}, namely
		\begin{equation}\label{setGp}
		\begin{split}
		G=\xi^{3}qe^{-2s\phi}, 
		\end{split}
		\end{equation}
		and $Y$ the function given by Lemma \ref{sublemreg}, 
		so that \eqref{muYq} yields:
		\begin{equation}
		\label{muYq-bis}
		\iint\limits_{{{Q^{ex}_{T}}}}\xi^{3}|q|^2 e^{-2s\phi}
		= 
		\iint\limits_{{{Q^{ex}_{T}}}}f_{1}Y
		+ 
		\iint\limits_{{\mathbb{T}^{1}_{T}}}Yf_{2}
		- 
		\iint\limits_{{{Q^{ex}_{T}}}}q H \chi_\omega.
		\end{equation}
		
		With the choice \eqref{setGp} of $G,$ observe in particular that
		$$\iint\limits_{{{Q^{ex}_{T}}}}\xi^{-3}|G|^{2}e^{2s\phi}=\iint\limits_{{{Q^{ex}_{T}}}}\xi^{3}|q|^{2}e^{-2s\phi}.$$
		Besides \eqref{muYq-bis} furnishes, for all $\epsilon>0,$
		\begin{multline}\label{dermuYq}
		\iint\limits_{{{Q^{ex}_{T}}}}\xi^{3}|q|^{2}e^{-2s\phi}
		\leqslant 
		C \left(\epsilon s^{3}\lambda^{4}\iint\limits_{{{Q^{ex}_{T}}}}|Y|^{2}e^{2s\phi}+\frac{1}{\epsilon s^{3}\lambda^{4}}\iint\limits_{{{Q^{ex}_{T}}}}|f_{1}|^{2}e^{-2s\phi}
		+\frac{1}{\epsilon}\iint\limits_{{\omega_{T}}}\xi^{3}|q|^{2}e^{-2s\phi}
		\right.
		\\
		\left.+\epsilon\iint\limits_{{\omega_{T}}} \xi^{-3}|H|^{2}e^{2s\phi}
		+ \frac{1}{\epsilon s^{2}\lambda^{3}}\iint\limits_{{\mathbb{T}^{1}_{T}}}\xi|f_{2}|^{2}e^{-2s\phi}+\epsilon s^{2}\lambda^{3}\iint\limits_{{\mathbb{T}^{1}_{T}}}\xi^{-1}|Y|^{2}e^{2s\phi}\right).
		\end{multline}
		On the other hand with the particular choice \eqref{setGp} of $G,$ the inequality \eqref{inYH} in particular takes the form
		\begin{equation}\label{inYHwG}
		\begin{split}
		&\displaystyle s^{3}\lambda^{4}\iint\limits_{{{Q^{ex}_{T}}}}|Y|^{2}e^{2s\phi}
		+s^{2}\lambda^{3}\iint\limits_{{\mathbb{T}^{1}_{T}}}\xi^{-1}|Y|^{2}e^{2s\phi}+\iint\limits_{{\omega_{T}}}\xi^{-3}|H|^{2}e^{2s\phi}\leqslant C\iint\limits_{{{Q^{ex}_{T}}}}\xi^{3}|q|^{2}e^{-2s\phi}.
		\end{split}
		\end{equation}
		Incorporating \eqref{inYHwG} in \eqref{dermuYq} and choosing small enough value for the parameter $\epsilon,$ we prove that 
		\begin{equation}\label{estq1}
		\begin{array}{l}
		\displaystyle s^{3}\lambda^{4}\iint\limits_{{{Q^{ex}_{T}}}}{\xi}^{3}|q|^{2}e^{-2s\phi}\\
		\displaystyle\leqslant C\left( \iint\limits_{{{Q^{ex}_{T}}}}e^{-2s\phi}|f_{1}|^{2}+s\lambda\iint\limits_{{\mathbb{T}^{1}_{T}}}e^{-2s\phi}{\xi}|f_{2}|^{2}
		\displaystyle+s^{3}\lambda^{4}\iint\limits_{{\omega_{T}}}e^{-2s\phi}{\xi}^{3}|q|^{2} \right).
		\end{array}
		\end{equation}
		Now one needs to estimate the integral of $\nabla q$ on $Q^{ex}_{T}$ and the integral of $q$ on $\mathbb{T}^{1}_{T}$ to finish proving the inequality \eqref{obsnht}. 
		\\
		In order to do that, we perform a weighted energy estimate on $q$ by multiplying \eqref{adjheateq} by $\xi q e^{- 2 s \phi}$. This and the estimates \eqref{prilies}, \eqref{prilies*} lead to 
		\begin{align}
		&
		\iint\limits_{{{Q^{ex}_{T}}}}{\xi} |\nabla q|^{2}e^{-2s\phi}\notag\\
		&\leq
		C(1+\frac{1}{\epsilon}) s \lambda^2 \iint\limits_{{{Q^{ex}_{T}}}}{\xi^3} |q|^{2}e^{-2s\phi}
		+ C\epsilon\iint\limits_{{{Q^{ex}_{T}}}} \xi|\nabla q|^{2}e^{-2s\phi}+
		C  \iint\limits_{{{Q^{ex}_{T}}}}\xi |q| |f_1| e^{-2s\phi}
		+ 
		C \iint\limits_{{\mathbb{T}^{1}_{T}}}e^{-2s\phi}{\xi} |f_{2}| |q|
		\notag
		\\
		&
		\leq
		C(1+\frac{1}{\epsilon}) s \lambda^2 \iint\limits_{{{Q^{ex}_{T}}}}{\xi^3} |q|^{2}e^{-2s\phi}
		+ C\epsilon\iint\limits_{{{Q^{ex}_{T}}}} \xi|\nabla q|^{2}e^{-2s\phi}
		+ 
		\frac{C}{s \lambda^2}  \iint\limits_{{{Q^{ex}_{T}}}} |f_1|^2 e^{-2s\phi}
		+ 
		C \lambda \iint\limits_{{\mathbb{T}^{1}_{T}}}e^{-2s\phi}{\xi} |q|^2\notag\\
		&\quad+ 
		\frac{C}{\lambda} \iint\limits_{{\mathbb{T}^{1}_{T}}}e^{-2s\phi} \xi |f_2|^2,
		\label{Est-Nabla-q}
		\end{align}
		for some positive parameter $\epsilon.$\\
		Now, similarly as in \eqref{1ststpYb}, we can obtain
		\begin{align}
		\label{est-boundary}
		s \lambda \iint\limits_{{\mathbb{T}^{1}_{T}}}e^{-2s\phi}{\xi} |q|^2
		& \leq s \lambda \iint\limits_{{\mathbb{T}^{1}_{T}}}e^{-2s\phi}{\xi^2} |q|^2
		\\
		& \leq
		C 
		\left( 
		s^2\lambda^2 (1+\frac{1}{\varepsilon})
		\iint\limits_{{{Q^{ex}_{T}}}}{\xi}^{3} 
		|q|^{2}e^{-2s\phi}
		+ 
		{\varepsilon} \iint\limits_{{{Q^{ex}_{T}}}} \xi|\nabla q|^{2}e^{-2s\phi}
		\right).
		\notag
		\end{align}
		Therefore, choosing $\varepsilon >0$ small enough, we deduce from \eqref{Est-Nabla-q} and \eqref{est-boundary} that there exists a constant $C>0$ such that for all $s$ and $\lambda$ large enough, 
		\begin{multline*}
		\iint\limits_{{{Q^{ex}_{T}}}}{\xi} |\nabla q|^{2}e^{-2s\phi}
		+
		s \lambda \iint\limits_{{\mathbb{T}^{1}_{T}}}e^{-2s\phi}{\xi^2} |q|^2
		\\
		\leq
		C s^2 \lambda^2 \iint\limits_{{{Q^{ex}_{T}}}}{\xi^3} |q|^{2}e^{-2s\phi}
		+ 
		\frac{C}{s\lambda^2}  \iint\limits_{{{Q^{ex}_{T}}}} |f_1|^2 e^{-2s\phi}
		+ 
		\frac{C}{\lambda} \iint\limits_{{\mathbb{T}^{1}_{T}}}e^{-2s\phi} \xi |f_2|^2.
		\end{multline*}
		With the estimate \eqref{estq1}, this finishes the proof of Theorem \ref{carlht}.
		\subsubsection{Proof of Corollary \ref{carlht**}}\label{Subsubsec-Add-Cor}
		
		Let us introduce the new unknown:
		$$q_{1}=\frac{1}{s\lambda \xi}q.$$
		From \eqref{adjheateq} we obtain the following system satisfied by $q_1$:
		\begin{equation}\label{adjheateq*}
		\left\{ \begin{array}{ll}
		\displaystyle-\frac{\overline{\rho}}{\nu}\partial_{t}q_1-\Delta q_1=\frac{1}{s\lambda\xi}f_{1}+\mathcal{F}_{5}\quad&\mbox{in}\quad Q^{ex}_{T},\\
		\displaystyle\partial_{z}q_1=\frac{1}{s\lambda \xi}f_{2}\quad&\mbox{on}\quad\mathbb{T}^{1}_{T},\\
		\displaystyle\partial_{z}q_1=0\quad&\mbox{on}\quad\mathbb{T}^{0}_{T},\\
		\displaystyle q_1(.,T)=\frac{1}{s\lambda \xi}q_{T}\quad & \mbox{in}\quad \mathbb{T}_{L}\times(0,1),
		\end{array}\right.
		\end{equation}
		where
		\begin{equation}\label{exF5}
		\begin{array}{l}
		\displaystyle \mathcal{F}_{5}=\frac{\overline{\rho}}{\nu}\frac{1}{s\lambda \xi^{2}}\partial_{t}\xi q+\frac{1}{s\lambda \xi^{2}}\nabla q\cdot\nabla\xi+\frac{1}{s\lambda \xi^{2}}q\Delta\xi+\frac{2}{s\lambda \xi^{3}}|\nabla\xi|^{2}q-\frac{1}{s\lambda \xi^{2}}\nabla\xi\cdot\nabla q.
		\end{array}
		\end{equation}
		In view of the estimates \eqref{prilies*}, $\mathcal{F}_{5}$ satisfies
		\begin{equation}\label{estF5}
		\begin{array}{l}
		\displaystyle\iint\limits_{{{Q^{ex}_{T}}}}e^{-2s\phi}|\mathcal{F}_{5}|^{2}\leqslant C\left(\frac{\lambda^2}{s^{2}}\iint\limits_{{{Q^{ex}_{T}}}}e^{-2s\phi}\frac{1}{\xi}|q|^{2}+\frac{1}{s^{2}}\iint\limits_{{{Q^{ex}_{T}}}}e^{-2s\phi}\frac{1}{\xi^{2}}|\nabla q|^{2} \right).
		\end{array}
		\end{equation}
		Now we apply Theorem \ref{carlht} for the system \eqref{adjheateq*} to have the following
		\begin{equation}\label{obsnht**}
		\begin{split}
		\displaystyle
		&\iint\limits_{{{Q^{ex}_{T}}}}e^{-2s\phi}(s\lambda^{2}{\xi}|\nabla q_1|^{2}+s^{3}\lambda^{4}{\xi}^{3}|q_1|^{2})
		+s^{2}\lambda^{3}\iint\limits_{{\mathbb{T}^{1}_{T}}}e^{-2s\phi}{\xi}^{2}|q_1|^{2}\\
		&\leqslant C\left( \frac{1}{s^{2}\lambda^{2}}\iint\limits_{{{Q^{ex}_{T}}}}e^{-2s\phi}\frac{1}{\xi^{2}}|f_{1}|^{2}+\iint\limits_{{{Q^{ex}_{T}}}}e^{-2s\phi}|\mathcal{F}_{5}|^{2}+\frac{1}{s\lambda}\iint\limits_{{\mathbb{T}^{1}_{T}}}e^{-2s\phi}{\xi}\frac{1}{\xi^2}|f_{2}|^{2}\right.\\
		&\qquad
		\displaystyle
		\left. +s^{3}\lambda^{4}\iint\limits_{{\omega_{T}}}e^{-2s\phi}{\xi}^{3}|q_1|^{2}\right)
		\\
		&\displaystyle\leqslant C\left(
		\frac{1}{s^{2}\lambda^{2}}\iint\limits_{{{Q^{ex}_{T}}}}e^{-2s\phi}\frac{1}{\xi^{2}}|f_{1}|^{2}
		+\frac{\lambda^2}{s^{2}}\iint\limits_{{{Q^{ex}_{T}}}}e^{-2s\phi}\frac{1}{\xi}|q|^{2}
		+\frac{1}{s^{2}}\iint\limits_{{{Q^{ex}_{T}}}}e^{-2s\phi}\frac{1}{\xi^{2}}|\nabla q|^{2}
		\right.
		\\
		& \left.
		\quad+\frac{1}{s\lambda}\iint\limits_{{\mathbb{T}^{1}_{T}}}e^{-2s\phi}\frac{1}{\xi}|f_{2}|^{2}
		\displaystyle
		+s\lambda^{2}\iint\limits_{{\omega_{T}}}e^{-2s\phi}{\xi}|q|^{2}
		\right),
		\end{split}
		\end{equation}
		where the last step of \eqref{obsnht**} from the penultimate step follows by using \eqref{estF5} and the definition of $q_1.$ 
		Now, using $q = s \lambda \xi q_1$,  one further checks the following for sufficiently large vales of $s$ and $\lambda$
		\begin{multline}
		\label{Comparison-q-1-q}
		\iint\limits_{{{Q^{ex}_{T}}}}e^{-2s\phi}(\frac{1}{ s \xi} |\nabla q|^{2}+s\lambda^{2}{\xi}|q|^{2})
		+ \lambda \iint\limits_{{\mathbb{T}^{1}_{T}}}e^{-2s\phi}|q|^{2}
		\\
		\leq C
		\left(
		\iint\limits_{{{Q^{ex}_{T}}}}e^{-2s\phi}(s\lambda^{2}{\xi}|\nabla q_1|^{2}+s^{3}\lambda^{4}{\xi}^{3}|q_1|^{2})
		+s^{2}\lambda^{3}\iint\limits_{{\mathbb{T}^{1}_{T}}}e^{-2s\phi}{\xi}^{2}|q_1|^{2}
		\right).
		\end{multline}
		Combining \eqref{obsnht**} and \eqref{Comparison-q-1-q}, we get
		\begin{multline*}		
		\iint\limits_{{{Q^{ex}_{T}}}}e^{-2s\phi}(\frac{1}{ s \xi} |\nabla q|^{2}+s\lambda^{2}{\xi}|q|^{2})
		+ \lambda \iint\limits_{{\mathbb{T}^{1}_{T}}}e^{-2s\phi}|q|^{2}
		\\
		\displaystyle\leqslant C\left(
		\frac{1}{s^{2}\lambda^{2}}\iint\limits_{{{Q^{ex}_{T}}}}e^{-2s\phi}\frac{1}{\xi^{2}}|f_{1}|^{2}
		+\frac{\lambda^2}{s^{2}}\iint\limits_{{{Q^{ex}_{T}}}}e^{-2s\phi}\frac{1}{\xi}|q|^{2}
		+\frac{1}{s^{2}}\iint\limits_{{{Q^{ex}_{T}}}}e^{-2s\phi}\frac{1}{\xi^{2}}|\nabla q|^{2}
		\right.
		\\
		\left.
		\quad+\frac{1}{s\lambda}\iint\limits_{{\mathbb{T}^{1}_{T}}}e^{-2s\phi}\frac{1}{\xi}|f_{2}|^{2}
		\displaystyle
		+s\lambda^{2}\iint\limits_{{\omega_{T}}}e^{-2s\phi}{\xi}|q|^{2}
		\right).
		\end{multline*}
		Taking $s$ and $\lambda$ large enough, we conclude Corollary \ref{carlht**}.
		\subsection{Observability of an adjoint transport equation}\label{sectrans}
		In this section we derive an observability inequality for the adjoint transport equation
		\begin{equation}\label{adjtra}
		\left\{ \begin{array}{lll}
		&-\partial_{t}\sigma-\overline{u}_{1}\partial_{x}\sigma+\frac{P'(\overline{\rho})}{\nu}\sigma=f_{4}\,& \mbox{in}\, Q^{ex}_{T},
		\vspace{1.mm}\\
		& {\sigma}(\cdot,T)=\sigma_{T}\,& \mbox{in}\, \mathbb{T}_{L}\times(0,1).
		\end{array}\right.
		\end{equation}
		\begin{thm}
			\label{Thm-obs-transport}
			Let us recall the notations $Q^{ex}_{T}$ and $\omega_{T}$ introduced in \eqref{inshrthnd}. There exists a positive constant $C$ such that for all $\sigma_T \in L^2(\mathbb{T}_{L}\times(0,1))$, $f_{4}\in L^{2}({Q^{ex}_{T}})$ and for all values of the parameters $s\geqslant 1$ and $\lambda \geq 1$, the solution $\sigma$ of \eqref{adjtra} satisfies the following inequality
			\begin{equation}\label{obsadjtra}
			\begin{split}
			\|\xi^{-\frac{1}{2}}\sigma e^{-s\phi}\|^{2}_{L^{2}({Q^{ex}_{T}})}
			\leqslant C(\|\xi^{-\frac{1}{2}}f_{4}e^{-s\phi}\|^{2}_{L^{2}({Q^{ex}_{T}})}
			+\|\xi^{-\frac{1}{2}}\sigma e^{-s\phi}\|^{2}_{L^{2}(\omega_{T})}).
			\end{split}
			\end{equation}
			There exists a positive constant $C$ such that for all $\sigma_T \in H^1(\mathbb{T}_{L}\times(0,1))$, $f_{4}\in L^{2}(0,T;H^{1}(\mathbb{T}_{L}\times(0,1))),$ and for all values of the parameter $s\geqslant 1$  and $\lambda \geq 1,$ the solution $\sigma$ of \eqref{adjtra} satisfies the following inequality
			\begin{equation}\label{obsadjtra*}
			\begin{split}
			\|\xi^{-\frac{1}{2}}\nabla\sigma e^{-s\phi}\|^{2}_{L^{2}({Q^{ex}_{T}})}
			\leqslant C(\|\xi^{-\frac{1}{2}}\nabla f_{4}e^{-s\phi}\|^{2}_{L^{2}({Q^{ex}_{T}})}
			+\|\xi^{-\frac{1}{2}}\nabla\sigma e^{-s\phi}\|^{2}_{L^{2}(\omega_{T})}).
			\end{split}
			\end{equation}
			Consequently there exists a positive constant $C$ such that for all $f_{4}\in L^{2}(0,T;H^{1}(\mathbb{T}_{L}\times(0,1))),$ and for all values of the parameter $s\geqslant 1$  and $\lambda \geq 1,$ the solution $\sigma$ of \eqref{adjtra} satisfies the following inequality
			\begin{equation}\label{obsadjtra**}
			\begin{split}
			\|\xi^{-\frac{1}{2}}\partial_{t}\sigma e^{-s\phi}\|^{2}_{L^{2}({Q^{ex}_{T}})}
			\leqslant& C(\|\xi^{-\frac{1}{2}}\nabla f_{4}e^{-s\phi}\|^{2}_{L^{2}({Q^{ex}_{T}})}+\| \xi^{-\frac{1}{2}}f_{4}e^{-s\phi}\|^{2}_{L^{2}({Q^{ex}_{T}})}\\
			&+\|\xi^{-\frac{1}{2}}\sigma e^{-s\phi}\|^{2}_{L^{2}(\omega_{T})}+\|\xi^{-\frac{1}{2}}\nabla\sigma e^{-s\phi}\|^{2}_{L^{2}(\omega_{T})}).
			\end{split}
			\end{equation}
		\end{thm}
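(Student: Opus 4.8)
The plan is to obtain (\ref{obsadjtra**}) as a direct algebraic consequence of the two preceding estimates (\ref{obsadjtra}) and (\ref{obsadjtra*}) together with the equation itself, without any further integration by parts or duality argument. First I would read the time derivative directly off the transport equation (\ref{adjtra})$_1$, which gives the pointwise identity
\[
\partial_{t}\sigma = -\overline{u}_{1}\partial_{x}\sigma + \frac{P'(\overline{\rho})}{\nu}\sigma - f_{4} \qquad \mbox{in}\ Q^{ex}_{T}.
\]
Multiplying by the common weight $\xi^{-\frac{1}{2}}e^{-s\phi}$, applying the triangle inequality in $L^{2}(Q^{ex}_{T})$, and using the trivial pointwise bound $|\partial_{x}\sigma|\leq|\nabla\sigma|$, one arrives at
\begin{multline*}
\|\xi^{-\frac{1}{2}}\partial_{t}\sigma\, e^{-s\phi}\|^{2}_{L^{2}(Q^{ex}_{T})}
\\
\leq C\Big( \|\xi^{-\frac{1}{2}}\nabla\sigma\, e^{-s\phi}\|^{2}_{L^{2}(Q^{ex}_{T})} + \|\xi^{-\frac{1}{2}}\sigma\, e^{-s\phi}\|^{2}_{L^{2}(Q^{ex}_{T})} + \|\xi^{-\frac{1}{2}}f_{4}\, e^{-s\phi}\|^{2}_{L^{2}(Q^{ex}_{T})} \Big),
\end{multline*}
where the constant $C$ depends only on $\overline{u}_{1}$ and $P'(\overline{\rho})/\nu$, and in particular is independent of $s$ and $\lambda$ since the same weight multiplies every term.

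The second step is to absorb the two volume terms $\|\xi^{-\frac{1}{2}}\nabla\sigma\, e^{-s\phi}\|^{2}_{L^{2}(Q^{ex}_{T})}$ and $\|\xi^{-\frac{1}{2}}\sigma\, e^{-s\phi}\|^{2}_{L^{2}(Q^{ex}_{T})}$ on the right-hand side. For the gradient term I would invoke (\ref{obsadjtra*}), which bounds it by $C\big(\|\xi^{-\frac{1}{2}}\nabla f_{4}\, e^{-s\phi}\|^{2}_{L^{2}(Q^{ex}_{T})} + \|\xi^{-\frac{1}{2}}\nabla\sigma\, e^{-s\phi}\|^{2}_{L^{2}(\omega_{T})}\big)$; for the zeroth-order term I would invoke (\ref{obsadjtra}), which bounds it by $C\big(\|\xi^{-\frac{1}{2}}f_{4}\, e^{-s\phi}\|^{2}_{L^{2}(Q^{ex}_{T})} + \|\xi^{-\frac{1}{2}}\sigma\, e^{-s\phi}\|^{2}_{L^{2}(\omega_{T})}\big)$. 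Substituting both bounds and collecting terms reproduces exactly the four quantities on the right-hand side of (\ref{obsadjtra**}); note that the global term $\|\xi^{-\frac{1}{2}}f_{4}\, e^{-s\phi}\|^{2}_{L^{2}(Q^{ex}_{T})}$ produced by (\ref{obsadjtra}) simply merges with the $f_{4}$-term already present from the pointwise bound, so no spurious term survives.

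Since the derivation is purely algebraic once the first two estimates of the theorem are established, there is no genuine analytic obstacle at this stage; the only matters requiring attention are of a bookkeeping nature. I would check, first, that every weighted norm appearing is taken with respect to the single weight $\xi^{-\frac{1}{2}}e^{-s\phi}$, so that the powers of $\xi$ match precisely and no additional powers of $s$ or $\lambda$ are generated by the two substitutions; and second, that the standing regularity hypotheses $\sigma_{T}\in H^{1}(\mathbb{T}_{L}\times(0,1))$ and $f_{4}\in L^{2}(0,T;H^{1}(\mathbb{T}_{L}\times(0,1)))$ are exactly those under which both (\ref{obsadjtra}) and (\ref{obsadjtra*}) hold, so that both may legitimately be applied. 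With these verifications the estimate (\ref{obsadjtra**}) follows, completing the proof of the theorem.
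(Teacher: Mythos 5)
Your proposal proves only the last of the three estimates in the theorem. The statement of Theorem \ref{Thm-obs-transport} comprises \eqref{obsadjtra}, \eqref{obsadjtra*} and \eqref{obsadjtra**}, and you take the first two as given and deduce the third from them. That deduction is correct and is in fact exactly what the paper does for \eqref{obsadjtra**}: read $\partial_t\sigma$ off the equation \eqref{adjtra}$_1$, apply the triangle inequality with the common weight $\xi^{-1/2}e^{-s\phi}$, and absorb the two resulting volume terms using \eqref{obsadjtra} and \eqref{obsadjtra*}. Your bookkeeping of the weights and of the $s,\lambda$-independence of the constant is also fine. But this is the trivial third of the theorem; the substance lies entirely in \eqref{obsadjtra}, for which you offer no argument.

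The paper's proof of \eqref{obsadjtra} is a genuine duality argument: one constructs, for every source $\widetilde f_4$ with $\|\xi^{1/2}e^{s\phi}\widetilde f_4\|_{L^2(Q^{ex}_T)}<\infty$, a controlled trajectory $\widetilde\sigma$ of the forward transport problem \eqref{dualcont} with $\widetilde\sigma(\cdot,0)=\widetilde\sigma(\cdot,T)=0$ and a control supported in $\omega$, by gluing the forward solution \eqref{dualcontforward} and the backward solution \eqref{dualcontbackward} with cut-offs in the spirit of \cite{ervguglachp3}. The weighted energy estimates \eqref{Gronwall}--\eqref{estimatecontrol} rest on the structural fact that $(\partial_t+\overline u_1\partial_x)(\xi e^{2s\phi})\leqslant 0$ on $(0,T-2T_1)$, which is where the transported weight $\eta^0(x,t)=\eta(x-\overline u_1 t)$, the plateau of $\theta$ on $[2T_0,T-2T_1]$, and the hypotheses $T>d/\overline u_1$ and $L=3\overline u_1 T$ all enter; the observability inequality then follows by pairing $\sigma$ against $\widetilde f_4$ and using the controlled trajectory. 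None of this is present in your proposal, and it cannot be replaced by algebra on the equation: a transport equation admits no smoothing, so a global-in-space bound on $\sigma$ from a localized observation must exploit propagation along characteristics, which is precisely what the controllability construction encodes. The estimate \eqref{obsadjtra*} then follows by applying \eqref{obsadjtra} to the equation satisfied by $\nabla\sigma$ — a short step, but one you should still state. As written, your argument establishes only the implication $\bigl(\eqref{obsadjtra}\wedge\eqref{obsadjtra*}\bigr)\Rightarrow\eqref{obsadjtra**}$, not the theorem.
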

		\begin{proof}
			We first prove the inequality \eqref{obsadjtra}. The proof depends on a controllability estimate of the following problem
			\begin{equation}\label{dualcont}
			\left\{ \begin{array}{lll}
			&\displaystyle\partial_{t}\widetilde\sigma+\overline{u}_{1}\partial_{x}\widetilde\sigma+\frac{P'(\overline{\rho})}{\nu}\widetilde\sigma=\widetilde{f}_{4}+v_{\widetilde{\sigma}}\chi_{\omega}& \mbox{in}\, {Q^{ex}_{T}},
			\vspace{1.mm}\\
			& \displaystyle\widetilde{\sigma}(\cdot,0)=0\,& \mbox{in}\, \mathbb{T}_{L}\times(0,1),\\
			& \displaystyle\widetilde{\sigma}(\cdot,T)=0\,& \mbox{in}\, \mathbb{T}_{L}\times(0,1).
			\end{array}\right.
			\end{equation}
			and a duality argument.\\
			We suppose that the source term $\widetilde{f}_{4}$ satisfies:
			$$\left\|\xi^{\frac{1}{2}} e^{s\phi}\widetilde{f}_{4}\right\|_{L^{2}(Q^{ex}_{T})}<\infty.$$
			In order to obtain controllability estimates for the problem \eqref{dualcont} we will follow the arguments used in \cite[Theorem 3.5]{ervguglachp3} but here with different multipliers.\\
			The controlled trajectory $\widetilde{\sigma}$ and the control function $v_{\widetilde{\sigma}}\chi_{\omega}$ can be constructed by adapting \cite[eq. $(3.26)$ ]{ervguglachp3} and \cite[eq. $(3.28)$]{ervguglachp3} in a straight forward manner. To obtain \eqref{obsadjtra} we recall at a glance the strategy to construct the controlled trajectory $\widetilde{\sigma}.$ It is done by gluing the solutions to the forward problem
			 \begin{equation}\label{dualcontforward}
			 \left\{ \begin{array}{lll}
			 &\displaystyle\partial_{t}\widetilde\sigma_{f}+\overline{u}_{1}\partial_{x}\widetilde\sigma_{f}+\frac{P'(\overline{\rho})}{\nu}\widetilde\sigma_{f}=\widetilde{f}_{4}& \mbox{in}\, {Q^{ex}_{T}},
			 \vspace{1.mm}\\
			 & \displaystyle\widetilde{\sigma}_{f}(\cdot,0)=0\,& \mbox{in}\, \mathbb{T}_{L}\times(0,1),\\
			 \end{array}\right.
			 \end{equation}
			 and the backward problem 
			 \begin{equation}\label{dualcontbackward}
			 \left\{ \begin{array}{lll}
			 &\displaystyle\partial_{t}\widetilde\sigma_{b}+\overline{u}_{1}\partial_{x}\widetilde\sigma_{b}+\frac{P'(\overline{\rho})}{\nu}\widetilde\sigma_{b}=\widetilde{f}_{4}& \mbox{in}\, {Q^{ex}_{T}},
			 \vspace{1.mm}\\
			 & \displaystyle\widetilde{\sigma}(\cdot,T)=0\,& \mbox{in}\, \mathbb{T}_{L}\times(0,1),
			 \end{array}\right.
			 \end{equation}
			 using suitable cut off functions.\\
			 $\textit{Esimates on}$ $\widetilde{\sigma}:$ In view of the construction \cite[eq. (3.26) ]{ervguglachp3} of the controlled trajectory, one first proves estimates for $\widetilde{\sigma}_{f}$ and $\widetilde{\sigma}_{b}.$ In that direction we test \eqref{dualcontforward}$_{1}$ by $\xi e^{2s\phi}\widetilde{\sigma}_{f}$ to furnish
			 \begin{equation}\label{energysf}
			 \begin{array}{ll}
			 \displaystyle\frac{d}{dt}\left(\frac{1}{2}\int\limits_{\mathbb{T}_L \times (0,1)}\xi e^{2s\phi}|\widetilde{\sigma}_{f}|^{2}\right)\leqslant&\displaystyle\frac{1}{2}\int\limits_{\mathbb{T}_L \times (0,1)}|\widetilde{\sigma}_{f}|^{2}\left(-2\frac{P'(\overline{\rho})}{\nu}\xi e^{2s\phi}+\left(\partial_{t}+\overline{u}_{1}\partial_{x}\right)(\xi e^{2s\phi})\right)\\
			 &+\displaystyle \left(\int\limits_{\mathbb{T}_L \times (0,1)}\xi e^{2s\phi}|\widetilde{f}_{4}|^{2}\right)^{\frac{1}{2}}\left(\int\limits_{\mathbb{T}_L \times (0,1)}\xi e^{2s\phi}|\widetilde{\sigma}_{f}|^{2}\right)^{\frac{1}{2}}.
			 \end{array}
			 	\end{equation}
			  Further in view of \eqref{transportweight} and \eqref{theta} (especially the fact that $\theta(t)$ is constant in $[2T_{0},T-2T_{1}]$) one has
			  $$\left(\partial_{t}+\overline{u}_{1}\partial_{x}\right)(\xi e^{2s\phi})\leqslant 0\quad\mbox{in}\quad (\mathbb{T}_{L}\times(0,1))\times(0,T-2T_{1}).$$
			  Hence \eqref{energysf} and Gr\"{o}nwall's inequality at once gives 
			  \begin{equation}\label{Gronwall}
			  \begin{array}{l}
			  \displaystyle\|\xi^{\frac{1}{2}}e^{s\phi}\widetilde{\sigma}_{f}\|^{2}_{L^{\infty}(0,T-2T_{1};L^{2}(\mathbb{T}_{L}\times(0,1)))}\leqslant C\|\xi^{\frac{1}{2}}e^{s\phi}\widetilde{f}_{4}\|^{2}_{L^{2}(0,T-2T_{1};L^{2}(\mathbb{T}_{L}\times(0,1)))}.
			  \end{array}
			  	\end{equation}
			  	Similarly one proves
			  	 \begin{equation}\label{Gronwallback}
			  	 \begin{array}{l}
			  	 \displaystyle\|\xi^{\frac{1}{2}}e^{s\phi}\widetilde{\sigma}_{b}\|^{2}_{L^{\infty}(T_{0},T;L^{2}(\mathbb{T}_{L}\times(0,1)))}\leqslant C\|\xi^{\frac{1}{2}}e^{s\phi}\widetilde{f}_{4}\|^{2}_{L^{2}(T_{0},T;L^{2}(\mathbb{T}_{L}\times(0,1)))}.
			  	 \end{array}
			  	 \end{equation}
			  	 Using the estimates \eqref{Gronwall} and \eqref{Gronwallback} and adapting the expression \cite[eq. (3.26)]{ervguglachp3} of the controlled trajectory, one at once renders
			  	 \begin{equation}\label{estimatesgma}
			  	 \begin{array}{l}
			  	 \displaystyle\|\xi^{\frac{1}{2}}e^{s\phi}\widetilde{\sigma}\|^{2}_{L^{\infty}(0,T;L^{2}(\mathbb{T}_{L}\times(0,1)))}\leqslant C\|\xi^{\frac{1}{2}}e^{s\phi}\widetilde{f}_{4}\|^{2}_{L^{2}(Q^{ex}_{T})}.
			  	 \end{array}
			  	 \end{equation}
			  	 Further one can easily adapt the construction \cite[eq. (3.28)]{ervguglachp3} of the control function and use the estimates \eqref{Gronwall} and \eqref{Gronwallback} to furnish
			  	 \begin{equation}\label{estimatecontrol}
			  	 \begin{array}{l}
			  	 \displaystyle\|\xi^{\frac{1}{2}}e^{s\phi}v_{\widetilde\sigma}\|^{2}_{L^{2}(Q^{ex}_{T})}\leqslant C\|\xi^{\frac{1}{2}}e^{s\phi}\widetilde{f}_{4}\|^{2}_{L^{2}(Q^{ex}_{T})}.
			  	 \end{array}
			  	 \end{equation}
		Now the observability estimate \eqref{obsadjtra} follows from \eqref{estimatesgma} and \eqref{estimatecontrol} by using the following duality arguments:
	 	\begin{align*}
			& \displaystyle\|\xi^{-\frac{1}{2}}\sigma e^{-s\phi}\|_{L^{2}(Q^{ex}_{T})}
			\displaystyle=\sup_{\|\xi^{\frac{1}{2}}\widetilde{f}_{4}e^{s\phi}\|_{L^{2}(Q^{ex}_{T})}\leqslant 1}|\langle\widetilde{f}_{4},\sigma\rangle_{L^{2}(Q^{ex}_{T})}|\\
			&\displaystyle \leqslant \sup_{\|\xi^{\frac{1}{2}}\widetilde{f}_{4}e^{s\phi}\|_{L^{2}(Q^{ex}_{T})}\leqslant 1}(|\langle\widetilde{\sigma},f_{4}\rangle_{L^{2}(Q^{ex}_{T})}|+|\langle v_{\widetilde{\sigma}}\chi_{\omega},\sigma\rangle_{L^{2}(Q^{ex}_{T})}|)\\
			& \displaystyle \leqslant C( 
			\|\xi^{-\frac{1}{2}}f_{4}e^{-s\phi}\|_{L^{2}(Q^{ex}_{T})}
			+
			\|\xi^{-\frac{1}{2}}\sigma e^{-s\phi}\|_{L^{2}(\omega_{T})}).
			\end{align*}
			This provides the inequality \eqref{obsadjtra}.
			\\
			The inequality \eqref{obsadjtra*} can be obtained by applying \eqref{obsadjtra} to the system satisfied by $\nabla\sigma.$ Once we have \eqref{obsadjtra*}, the estimate \eqref{obsadjtra**} can be obtained directly by using the equation \eqref{adjtra}$_{1}.$
		\end{proof}
		The following result is a corollary to Theorem \ref{Thm-obs-transport} and corresponds to the weighted estimates of $\sigma$ and $\nabla\sigma$ in $L^\infty(L^{2})$ norms. The following result will be used in the next section, more precisely in the proof of Lemma \ref{Lem-Obs-sigma-q}, to obtain an estimate of $\sigma(\cdot,2T_{0})$ in $H^{1}(\mathbb{T}_{L}\times(0,1))$ at a point $
		2T_{0}$ intermediate to $0$ and $T.$
		\begin{corollary}\label{corollarytransport}
				Let us recall the notations $Q^{ex}_{T}$ and $\omega_{T}$ introduced in \eqref{inshrthnd}. There exists a positive constant $C$ such that for all $\sigma_T \in L^2(\mathbb{T}_{L}\times(0,1))$, $f_{4}\in L^{2}({Q^{ex}_{T}})$ and for all values of the parameters $s\geqslant 1$ and $\lambda \geq 1$, the solution $\sigma$ of \eqref{adjtra} satisfies the following inequality
				\begin{equation}\label{obsadjtralinfty}
				\begin{split}
				\|\xi^{-\frac{3}{2}}\sigma e^{-s\phi}\|^{2}_{L^{\infty}(0,T;L^{2}(\mathbb{T}_{L}\times(0,1)))}
				\leqslant C(\|\xi^{-\frac{1}{2}}f_{4}e^{-s\phi}\|^{2}_{L^{2}({Q^{ex}_{T}})}
				+s\lambda\|\xi^{-\frac{1}{2}}\sigma e^{-s\phi}\|^{2}_{L^{2}(\omega_{T})}).
				\end{split}
				\end{equation}
				There exists a positive constant $C$ such that for all $\sigma_T \in H^1(\mathbb{T}_{L}\times(0,1))$, $f_{4}\in L^{2}(0,T;H^{1}(\mathbb{T}_{L}\times(0,1))),$ and for all values of the parameter $s\geqslant 1$  and $\lambda \geq 1,$ the solution $\sigma$ of \eqref{adjtra} satisfies the following inequality
				\begin{equation}\label{obsadjtralinfty2}
				\begin{split}
				\|\xi^{-\frac{3}{2}}\nabla\sigma e^{-s\phi}\|^{2}_{L^{\infty}(0,T;L^{2}(\mathbb{T}_{L}\times(0,1)))}
				\leqslant C(\|\xi^{-\frac{1}{2}}\nabla f_{4}e^{-s\phi}\|^{2}_{L^{2}({Q^{ex}_{T}})}
				+s\lambda\|\xi^{-\frac{1}{2}}\nabla\sigma e^{-s\phi}\|^{2}_{L^{2}(\omega_{T})}).
				\end{split}
				\end{equation}
		\end{corollary}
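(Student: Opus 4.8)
The plan is to obtain both \eqref{obsadjtralinfty} and \eqref{obsadjtralinfty2} from a single weighted energy estimate on the transport equation \eqref{adjtra}, and then to localize the resulting interior terms by feeding them into the bulk observability inequalities \eqref{obsadjtra}, \eqref{obsadjtra*} already established in Theorem \ref{Thm-obs-transport}. For \eqref{obsadjtralinfty} I set $m(t)=\int_{\mathbb{T}_{L}\times(0,1)}\xi^{-3}e^{-2s\phi}|\sigma|^{2}$ and multiply \eqref{adjtra}$_1$ by $\xi^{-3}e^{-2s\phi}\sigma$, integrating over $\mathbb{T}_{L}\times(0,1)$. Integrating the convection term $\overline{u}_{1}\partial_{x}\sigma$ by parts in $x$ (no boundary contribution, by periodicity in $\mathbb{T}_{L}$) yields the differential identity
\[
m'(t)=\int_{\mathbb{T}_{L}\times(0,1)}(\partial_{t}+\overline{u}_{1}\partial_{x})(\xi^{-3}e^{-2s\phi})\,|\sigma|^{2}+\frac{2P'(\overline{\rho})}{\nu}\,m(t)-2\int_{\mathbb{T}_{L}\times(0,1)}\xi^{-3}e^{-2s\phi}\sigma f_{4}.
\]
The key structural point is the transport invariance of the weight: since $\eta^{0}$ solves \eqref{transportweight}, the operator $\partial_{t}+\overline{u}_{1}\partial_{x}$ sees only the temporal factor $\theta(t)$ in $\xi$ and $\phi$, and using \eqref{prilies}, \eqref{prilies*} together with $\xi\geq 1$ one bounds $|(\partial_{t}+\overline{u}_{1}\partial_{x})(\xi^{-3}e^{-2s\phi})|\leq Cs\lambda\,\xi^{-1}e^{-2s\phi}$.

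Next I integrate this identity backward from $t=T$. Because $\theta$, and hence $\phi$, blows up at $t=T$ (see \eqref{theta}) while $\sigma(\cdot,T)=\sigma_{T}\in L^{2}$, dominated convergence gives $m(T)=0$, so that $m(t_{0})=-\int_{t_{0}}^{T}m'$. Estimating the three contributions of $-m'$ — the weight-derivative term by the bound just obtained, the zeroth-order term using $\xi^{-3}\leq\xi^{-1}$, and the source term by Young's inequality with the splitting $\xi^{-3}\sigma f_{4}=(\xi^{-3/2}\sigma)(\xi^{-3/2}f_{4})$ and again $\xi^{-3}\leq\xi^{-1}$ — and taking the supremum over $t_{0}\in[0,T]$ gives
\[
\|\xi^{-3/2}\sigma e^{-s\phi}\|^{2}_{L^{\infty}(0,T;L^{2})}\leq C\,\|\xi^{-1/2}f_{4}e^{-s\phi}\|^{2}_{L^{2}(Q^{ex}_{T})}+Cs\lambda\,\|\xi^{-1/2}\sigma e^{-s\phi}\|^{2}_{L^{2}(Q^{ex}_{T})}.
\]
It then remains to invoke the bulk observability \eqref{obsadjtra} to absorb the interior integral of $\sigma$ into the localized observation over $\omega_{T}$ together with the source contribution, which produces \eqref{obsadjtralinfty}.

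For the gradient estimate \eqref{obsadjtralinfty2} I exploit that $\overline{u}_{1}$ and $P'(\overline{\rho})/\nu$ in \eqref{adjtra} are constants, so that $\nabla\sigma$ solves \emph{exactly} the same transport equation with source $\nabla f_{4}$ and terminal datum $\nabla\sigma_{T}$; repeating the energy estimate above with $m(t)$ replaced by $\int_{\mathbb{T}_{L}\times(0,1)}\xi^{-3}e^{-2s\phi}|\nabla\sigma|^{2}$ and invoking \eqref{obsadjtra*} in place of \eqref{obsadjtra} yields \eqref{obsadjtralinfty2}. The main obstacle throughout is the bookkeeping of the weights at the temporal endpoints $t=0,T$, where $\theta$ is singular: one must verify both that $m(T)=0$ and that the transported weight-derivative is integrable in time with the correct powers of $s$ and $\lambda$. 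It is precisely the cancellation coming from \eqref{transportweight} — only the $\theta'$-part of $(\partial_{t}+\overline{u}_{1}\partial_{x})$ acting on the weight survives — that keeps these endpoint terms under control and fixes the factor $s\lambda$ in front of the observation terms.
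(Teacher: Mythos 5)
Your proposal is correct and follows essentially the same route as the paper: a weighted energy identity obtained by testing \eqref{adjtra}$_{1}$ with $\xi^{-3}\sigma e^{-2s\phi}$, the bound $|(\partial_{t}+\overline{u}_{1}\partial_{x})(\xi^{-3}e^{-2s\phi})|\leqslant Cs\lambda\,\xi^{-1}e^{-2s\phi}$ coming from \eqref{transportweight} and $\xi\geqslant 1$, a vanishing endpoint for the weighted $L^{2}$ norm, absorption of the interior terms via \eqref{obsadjtra} (resp. \eqref{obsadjtra*}), and the observation that $\nabla\sigma$ solves the same constant-coefficient transport equation. The only cosmetic difference is that you integrate backward from $t=T$ using $m(T)=0$, whereas the paper integrates forward from $t=0$ using $\xi^{-3}e^{-2s\phi}|\sigma|^{2}(\cdot,0)=0$; both endpoints carry vanishing weights by \eqref{theta}, so the two are interchangeable.
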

		\begin{proof}
			We test \eqref{adjtra}$_{1}$ by $\xi^{-3}\sigma e^{-2s\phi}$ and use integration by parts to have:
			\begin{equation}\label{energysflinfty}
			\begin{array}{ll}
			&\displaystyle\frac{d}{dt}\left(\frac{1}{2}\int\limits_{\mathbb{T}_L \times (0,1)}\xi^{-3} e^{-2s\phi}|{\sigma}|^{2}\right)\\
			&\leqslant\displaystyle\frac{1}{2}\int\limits_{\mathbb{T}_L \times (0,1)}|{\sigma}|^{2}\left|\left(2\frac{P'(\overline{\rho})}{\nu}\xi^{-3} e^{-2s\phi}+\left(\partial_{t}+\overline{u}_{1}\partial_{x}\right)(\xi^{-3} e^{-2s\phi})\right)\right|\\
			&\displaystyle\quad+ \left(\int\limits_{\mathbb{T}_L \times (0,1)}\xi^{-3} e^{-2s\phi}|{f}_{4}|^{2}\right)^{\frac{1}{2}}\left(\int\limits_{\mathbb{T}_L \times (0,1)}\xi^{-3} e^{-2s\phi}|{\sigma}|^{2}\right)^{\frac{1}{2}}.
			\end{array}
			\end{equation}
			Further using $\xi\geqslant 1$ and the estimates \eqref{prilies} and \eqref{prilies*} one has
			$$\left|\left(\partial_{t}+\overline{u}_{1}\partial_{x}\right)(\xi^{-3} e^{-2s\phi})\right|\leqslant Cs\lambda\xi^{-1}e^{-2s\phi}.$$
			Hence the R.H.S of \eqref{energysflinfty} can be majorized by a constant multiple of 
			$$\displaystyle 
			s\lambda\int\limits_{\mathbb{T}_L \times (0,1)}\xi^{-1}e^{-2s\phi}|{\sigma}|^{2}+\int\limits_{\mathbb{T}_L \times (0,1)}\xi^{-1} e^{-2s\phi}|{f}_{4}|^{2}.$$
			Now integrating both sides of \eqref{energysflinfty} with respect to time in the interval $(0,t),$ for any $t\in(0,T),$ recalling that, $\xi^{-3}e^{-2s\phi}|\sigma|^{2}(\cdot,0)=0$ and making use of \eqref{obsadjtra} we obtain \eqref{obsadjtralinfty}.\\
			One proves \eqref{obsadjtralinfty2} by applying \eqref{obsadjtralinfty} to the equation satisfied by $\nabla\sigma.$
			
			\end{proof}

		%
		%
		%
		\section{Observability and unique continuation of the system \eqref{adjsys}}\label{obssys}
		%
		%
		
		The goal of this section is to prove Theorem \ref{main3} and Corollary \ref{uniquecontinuation}. In order to do that, as for the proof of Theorem \ref{lemincreg}, the key step is first to prove an observability result for the system \eqref{adjsysq}. 
		In fact, we will start by showing the following result: 
		\begin{lem}
			\label{Lem-Obs-sigma-q}
			There exists a constant $C >0$ such that for all $(\sigma, q, \psi)$ solving \eqref{adjsysq} with initial datum $(\sigma_T, q_T, \psi_T, \psi_T^1)$ having the regularity \eqref{indatadjq} and satisfying the compatibility conditions \eqref{compatonqT}, 
			\begin{multline}
			\label{penobst0}
			\|( \psi(\cdot, 0), \partial_t \psi(\cdot, 0)) \|_{H^3(\mathbb{T}_L \times (0,1))) \times H^1(\mathbb{T}_L \times (0,1))}
			+ 
			\|q(\cdot, 0)\|_{H^2(\mathbb{T}_L \times (0,1))}
			+ 
			\| \sigma(\cdot, 0) \|_{H^1(\mathbb{T}_L \times (0,1))}
			\\
			\leq 
			C \| \psi \|_{L^2(\omega_1^T)}
			+ 
			C \| q \|_{H^1(\omega_T)}
			+ 
			C \| \sigma \|_{H^1(\omega_T)},
			\end{multline}
			where $\omega_T = \omega \times (0,T)$, $\omega_1^T = \omega_1 \times (0,T).$
		\end{lem}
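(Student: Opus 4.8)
The plan is to obtain \eqref{penobst0} in three stages: assemble a single weighted estimate for the coupled triple $(\sigma,q,\psi)$ out of the three scalar estimates of Section~\ref{Sec-CarlemanS}, then localize it in time to get observability at an intermediate instant, and finally propagate this back to $t=0$ through the well-posedness of \eqref{adjsysq}. First I would read off the forcing of each scalar equation from the coupling in \eqref{adjsysq}, using that all three estimates share the weights \eqref{w8fn} built from the transported function $\eta^0$ and the time factor $\theta$ of \eqref{theta}. The transport equation for $\sigma$ is forced by a multiple of $q$ (its zeroth-order term being harmless, as it enters only the Grönwall constant), so Theorem~\ref{Thm-obs-transport} and Corollary~\ref{corollarytransport} apply. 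Multiplying the $q$-equation by $\overline\rho/\nu$ turns it into an adjoint heat equation with interior source $f_1\simeq \partial_x q+q+\sigma$ and Neumann datum $f_2=-\overline\rho(\partial_t+\overline u_1\partial_x)\psi$ on $\mathbb{T}^1_T$, so Corollary~\ref{carlht**} applies; and the beam equation is forced by $f_\psi=(\partial_t+\overline u_1\partial_x)q$ restricted to $\mathbb{T}_L\times\{1\}$, so Theorem~\ref{Carlbeamthm} applies.

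Next I would add the three estimates (rescaling the transport inequalities by a suitable power of $s,\lambda$) and absorb the cross terms into the left-hand sides for $s,\lambda$ large. The interior coupling between $\sigma$ and $q$ closes by pure power counting: in Corollary~\ref{carlht**} the source $f_1$ carries the weight $s^{-2}\lambda^{-2}\xi^{-2}$, so its $\nabla q$ and $q$ parts sit strictly below the left-hand weights $s^{-1}\xi^{-1}|\nabla q|^2$ and $s\lambda^2\xi|q|^2$, while its $\sigma$ part, weighted by $s^{-2}\lambda^{-2}\xi^{-2}$, lies below the quantity $\xi^{-1}|\sigma|^2$ controlled by \eqref{obsadjtra} (this is precisely why Corollary~\ref{carlht**}, and not Theorem~\ref{carlht}, is the right tool, since $\int\xi^{-1}|\sigma|^2$ cannot dominate $\int|\sigma|^2$). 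In turn, the $q$-forcing of the transport estimate sits below $s\lambda^2\xi|q|^2$. Likewise the Neumann datum $f_2$, weighted by $(s\lambda\xi)^{-1}$, is dominated by the boundary terms $s^3\lambda^4\xi^3|\partial_t\psi|^2$ and $s^5\lambda^6\xi^5|\partial_x\psi|^2$ on the left of \eqref{crlestbmthm}. After these absorptions one is left with a combined weighted inequality of the form \eqref{obs-sigma-q-psi-after-carls}, bounding the weighted full norms of $(\sigma,q,\psi)$ by the observations of $\psi$, $q$, $\sigma$ on $\omega_1^T$ and $\omega_T$.

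The hard part will be the beam source $f_\psi=(\partial_t+\overline u_1\partial_x)q|_{z=1}$, whose weighted $L^2(\mathbb{T}^1_T)$ norm must be controlled: it is a trace of the space-time gradient of $q$, and the heat Carleman estimate bounds only the trace of $q$ itself on the boundary, not that of $\nabla q$. I expect this to be the central obstacle. The way around it is to exploit the higher interior regularity of $q$ provided by Lemma~\ref{lemconstp2}, namely $q\in L^2(0,T;H^3)\cap H^{3/2}(0,T;L^2)$, which makes $\nabla q|_{z=1}$ meaningful in $L^2$, and then to absorb the term by a trace/interpolation inequality combined with a gain of a power of the time interval --- exactly the time-integrability mechanism used for the fixed-point map in Step~1 of Theorem~\ref{lemincreg}. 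This is what gives \eqref{obs-sigma-q-psi-after-carls} its unique-continuation flavour rather than the form of a clean Carleman estimate.

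Finally I would localize in time and propagate. On $[2T_0,T-2T_1]$ the factor $\theta$ equals $1$ by \eqref{theta}, so $\phi$ and $\xi$ are bounded above and below there and the weighted left-hand side of \eqref{obs-sigma-q-psi-after-carls} controls the unweighted norms of $(\sigma,q,\psi)$ on a neighbourhood of $t=2T_0$; extracting the slice at $t=2T_0$, and invoking Corollary~\ref{corollarytransport} (via \eqref{obsadjtralinfty}--\eqref{obsadjtralinfty2}) for the $H^1$ bound of $\sigma(\cdot,2T_0)$, yields observability at the intermediate time, i.e. the estimate \eqref{penobst-2T0}. Since the weights blow up at $t=0$ and $t=T$, no information survives at the endpoints, so I would then regard \eqref{adjsysq} as a backward evolution on $[0,2T_0]$ with data prescribed at $t=2T_0$ and re-apply the continuous-dependence estimate \eqref{corstep2} of Lemma~\ref{lemconstp2} on this subinterval to bound $\|(\psi,\partial_t\psi)(\cdot,0)\|_{H^3\times H^1}+\|q(\cdot,0)\|_{H^2}+\|\sigma(\cdot,0)\|_{H^1}$ by the corresponding norms at $t=2T_0$. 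Chaining this with \eqref{penobst-2T0} produces \eqref{penobst0}.
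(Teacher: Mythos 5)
Your overall architecture --- sum the three weighted estimates built on the common weight \eqref{w8fn}, absorb the cross terms for $s,\lambda$ large, localize at an intermediate time, then propagate to $t=0$ through Lemma \ref{lemconstp2} --- is the paper's, and you correctly single out the beam forcing $f_\psi=(\partial_t+\overline u_1\partial_x)q|_{z=1}$ as the crux. But your proposed resolution of that crux does not work. A trace/interpolation bound of $\|(\partial_t+\overline u_1\partial_x)q\|_{L^2(\mathbb{T}^1_T)}$ by interior norms of $q$, helped by ``a gain of a power of the time interval'', is the mechanism of a local-in-time fixed point, where smallness of $T_0$ supplies the contraction; in a Carleman argument the absorbed term must be beaten by powers of $s$ and $\lambda$ uniformly on all of $(0,T)$, and shrinking the time interval is not available. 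Worse, Corollary \ref{carlht**} applied to $q$ alone gives no weighted control of $\partial_t q$ at all, neither in the interior nor on the boundary (its left-hand side \eqref{obsnht***} only carries $|\nabla q|^2$, $|q|^2$ and the boundary trace of $q$), so there is nothing on your left-hand side for a trace inequality to reach. What the paper does instead is differentiate the heat equation for $q$ tangentially and apply Corollary \ref{carlht**} three times, to $q$, $\partial_t q$ and $\partial_x q$ (estimates \eqref{obs-q}--\eqref{obs-dt-q}--\eqref{obs-dx-q}); the left-hand sides of the last two contain precisely the boundary terms $\lambda\iint_{\mathbb{T}^{1}_{T}}e^{-2s\phi}\left(|\partial_t q|^2+|\partial_x q|^2\right)$ that absorb $C\iint_{\mathbb{T}^{1}_{T}}|\partial_t q+\overline u_1\partial_x q|^2e^{-2s\phi}$ once $\lambda$ is large. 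The price is new sources: $\partial_t\sigma$ and $\partial_x\sigma$ in the interior, handled by the transport observability applied to $\nabla\sigma$ and recovered for $\partial_t\sigma$ through the equation (estimates \eqref{obsadjtra*}--\eqref{obsadjtra**}), and second derivatives of $\psi$ on the boundary, which enter with weight $(s\lambda\xi)^{-1}$ and are dominated by the left-hand side of the beam estimate \eqref{crlestbmthm}. Without this tangential differentiation the system does not close.

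A second, smaller omission: \eqref{penobst-2T0} requires $\|q(\cdot,2T_0)\|_{H^2(\mathbb{T}_L\times(0,1))}$, whereas the combined estimate \eqref{obs-sigma-q-psi-after-carls} only yields $q\in H^1(2T_0,T-2T_1;H^1(\mathbb{T}_L\times(0,1)))$, hence at best an $H^1$ slice. The paper first extracts $q(\cdot,T-2T_1)\in H^1$ and then runs the backward heat equation on $(2T_0,T-2T_1)$ through the parabolic regularization Lemma \ref{parabolicregularization} (with source a multiple of $\sigma$ and Neumann datum built from $\psi$) to gain one derivative and land in $H^2$ at $t=2T_0$. Your sketch skips this smoothing step entirely.
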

		Lemma \ref{Lem-Obs-sigma-q} is proved in Section \ref{Subsec-Lem-Obs-sigma-q} below, and follows from a suitable use of the various Carleman estimates proved in the previous section.
		\begin{samepage}
		Based on Lemma \ref{Lem-Obs-sigma-q}, it will be rather easy to derive an estimate on $v(\cdot, 0)$ in $H^2(\mathbb{T}_L \times (0,1))$ and conclude the proof of Theorem \ref{main3}, which will be done in Section \ref{Subsec-Proof-Thm-Main3}.
		\subsection{Proof of Lemma \ref{Lem-Obs-sigma-q}}\label{Subsec-Lem-Obs-sigma-q} 
		Note that, from Lemma \ref{lemconstp2}, for $(\sigma_T, q_T, \psi_T, \psi_T^1)$ having the regularity \eqref{indatadjq} and satisfying the compatibility conditions \eqref{compatonqT}, we have that $(\partial_t + \overline u_1 \partial_x) q_{|\mathbb{T}_L \times \{1\}}$ belongs to $L^2(\mathbb{T}^{1}_{T})$. We thus apply Theorem \ref{Carlbeamthm} with $f_\psi = (\partial_t + \overline u_1 \partial_x) q_{|\mathbb{T}_L \times \{1\}}$:
		\begin{align}\label{est-psi-in-terms-of-q}
		& \displaystyle s^{7}\lambda^{8}\iint\limits_{{\mathbb{T}^{1}_{T}}} {\xi}^{7}|{\psi}|^{2}e^{-2s\phi}+s^{5}\lambda^{6}\iint\limits_{{\mathbb{T}^{1}_{T}}} {\xi}^{5}|{\partial_{x}{\psi}}|^{2}e^{-2s\phi}
		+s^{3}\lambda^{4}\iint\limits_{{\mathbb{T}^{1}_{T}}} {\xi}^{3} ( |{\partial_{xx}{\psi}}|^{2}+ |{\partial_{t}{\psi}}|^{2}) e^{-2s\phi}\notag\\
		&\displaystyle+s\lambda^{2}\iint\limits_{{\mathbb{T}^{1}_{T}}} {\xi}( |{\partial_{tx}{\psi}}|^{2}+|{\partial_{xxx}{\psi}}|^{2})e^{-2s\phi} 
		+
		\displaystyle\frac{1}{s}\iint\limits_{{\mathbb{T}^{1}_{T}}} \frac{1}{{\xi}}(|{\partial_{tt}{\psi}}|^{2}+|\partial_{txx}\psi|^{2}+|{\partial_{xxxx}{\psi}}|^{2})e^{-2s\phi}\notag
		\\
		&
		\displaystyle\leqslant  
		C\iint\limits_{{\mathbb{T}^{1}_{T}}} |\partial_t q + \overline u_1 \partial_x q|^{2}e^{-2s\phi}
		+Cs^{7}\lambda^{8}\iint\limits_{{\omega_1^T}} {\xi}^{7}|{\psi}|^{2}e^{-2s\phi}.
		\end{align}
		From this estimate we deduce the estimate of $\partial_{t}(\partial_t \psi + \overline u_1 \partial_x \psi)$ and $\partial_{x}(\partial_t \psi + \overline u_1 \partial_x \psi)$. We also know from Lemma \ref{lemconstp2} that $\sigma \in C^0([0,T]; H^1(\mathbb{T}_L \times (0,1))\cap C^{1}([0,T];L^{2}(\mathbb{T}_{L}\times(0,1)))$, so we can apply Corollary \ref{carlht**} to $q$, $\partial_t q$ and $\partial_{x} q$:
		\begin{align}\label{obs-q}
		\displaystyle
		&\iint\limits_{{{Q^{ex}_{T}}}}e^{-2s\phi}(\frac{1}{s \xi}|\nabla q|^{2}+s\lambda^{2}{\xi}| q|^{2})
		\\
		&\leqslant C\left( \frac{1}{s^{2}\lambda^{2}}\iint\limits_{{{Q^{ex}_{T}}}}e^{-2s\phi}\frac{1}{\xi^{2}}| \sigma |^{2}+\frac{1}{s \lambda}\iint\limits_{{\mathbb{T}^{1}_{T}}}e^{-2s\phi}\frac{1}{\xi}|\partial_{t} \psi + \overline u_1 \partial_{x} \psi |^{2}
		\displaystyle
		+s\lambda^{2}\iint\limits_{{\omega_{T}}}e^{-2s\phi}{\xi}|q|^{2}
		\right)\notag,
		\end{align}
		
		\begin{align}\label{obs-dt-q}
		\displaystyle
		&\iint\limits_{{{Q^{ex}_{T}}}}e^{-2s\phi}(\frac{1}{s \xi}|\nabla \partial_t q|^{2}+s\lambda^{2}{\xi}|\partial_t q|^{2})
		+\lambda\iint\limits_{{\mathbb{T}^{1}_{T}}}e^{-2s\phi}|\partial_t q|^{2}\\
		&\leqslant C\left( \frac{1}{s^{2}\lambda^{2}}\iint\limits_{{{Q^{ex}_{T}}}}e^{-2s\phi}\frac{1}{\xi^{2}}|\partial_t \sigma |^{2}+\frac{1}{s \lambda}\iint\limits_{{\mathbb{T}^{1}_{T}}}e^{-2s\phi}\frac{1}{\xi}|\partial_{tt} \psi + \overline u_1 \partial_{xt} \psi |^{2}
		\displaystyle
		+s\lambda^{2}\iint\limits_{{\omega_{T}}}e^{-2s\phi}{\xi}|\partial_t q|^{2}
		\right)\notag,
		\end{align}
		\begin{align}\label{obs-dx-q}
		\displaystyle
		&\iint\limits_{{{Q^{ex}_{T}}}}e^{-2s\phi}(\frac{1}{s \xi}|\nabla \partial_x q|^{2}+s\lambda^{2}{\xi}|\partial_x q|^{2})
		+\lambda\iint\limits_{{\mathbb{T}^{1}_{T}}}e^{-2s\phi}|\partial_x q|^{2}\\
		&\leqslant C\left( \frac{1}{s^{2}\lambda^{2}}\iint\limits_{{{Q^{ex}_{T}}}}e^{-2s\phi}\frac{1}{\xi^{2}}|\partial_x \sigma |^{2}+\frac{1}{s \lambda}\iint\limits_{{\mathbb{T}^{1}_{T}}}e^{-2s\phi}\frac{1}{\xi}|\partial_{tx} \psi + \overline u_1 \partial_{xx} \psi |^{2}
		\displaystyle
		+s\lambda^{2}\iint\limits_{{\omega_{T}}}e^{-2s\phi}{\xi}|\partial_x q|^{2}
		\right)\notag.
		\end{align}
		Now, we apply the observability estimates of Theorem \ref{Thm-obs-transport} with $f_{4}=q:$
		\begin{equation}
		\label{obs-sigma-q}
		\begin{array}{ll}
		&\displaystyle\frac{1}{s\lambda}\iint\limits_{{{Q^{ex}_{T}}}} \frac{1}{\xi}(| \sigma |^{2} + |\nabla\sigma|^2 + |\partial_t \sigma|^2) e^{-2s\phi}
		\\
		&\displaystyle\leq
		C \frac{1}{s\lambda}\iint\limits_{{\omega_{T}}}\frac{1}{\xi}(| \sigma |^{2} + |\nabla
		\sigma|^2 ) e^{-2s\phi}
		+ 
		C 
		\frac{1}{s\lambda}\iint\limits_{{{Q^{ex}_{T}}}}\frac{1}{\xi} ( |q|^2 + |\nabla q|^2)e^{-2s\phi}.
		\end{array}
		\end{equation}
		Further applying the estimates \eqref{obsadjtralinfty} and \eqref{obsadjtralinfty2} of Corollary \ref{corollarytransport} with $f_{4}=q:$
		\begin{equation}\label{transportlinfty}
		\begin{array}{ll}
		&\displaystyle\frac{1}{s^{2}\lambda^{2}}\sup_{(0,T)}\left(\int\limits_{\mathbb{T}_{L}\times(0,1)}\frac{1}{\xi^{3}}(|\sigma|^{2}+|\nabla\sigma|^{2})e^{-2s\phi}\right)\\
		&\displaystyle \leqslant C \frac{1}{s\lambda}\iint\limits_{{\omega_{T}}}\frac{1}{\xi}(| \sigma |^{2} + |\nabla
		\sigma|^2 ) e^{-2s\phi}
		+ 
		C 
		\frac{1}{s^{2}\lambda^{2}}\iint\limits_{{{Q^{ex}_{T}}}}\frac{1}{\xi} ( |q|^2 + |\nabla q|^2)e^{-2s\phi}.
		\end{array}
		\end{equation}
		\end{samepage}

		\begin{samepage}
				Therefore, summing up \eqref{est-psi-in-terms-of-q}--\eqref{obs-q}--\eqref{obs-dt-q}--\eqref{obs-dx-q}--\eqref{obs-sigma-q}--\eqref{transportlinfty}, we obtain:
		\begin{align*}
		&\displaystyle s^{7}\lambda^{8}\iint\limits_{{\mathbb{T}^{1}_{T}}} {\xi}^{7}|{\psi}|^{2}e^{-2s\phi}+s^{5}\lambda^{6}\iint\limits_{{\mathbb{T}^{1}_{T}}} {\xi}^{5}|{\partial_{x}{\psi}}|^{2}e^{-2s\phi}
		+s^{3}\lambda^{4}\iint\limits_{{\mathbb{T}^{1}_{T}}} {\xi}^{3} ( |{\partial_{xx}{\psi}}|^{2}+ |{\partial_{t}{\psi}}|^{2}) e^{-2s\phi}
		\\
		&\displaystyle+s\lambda^{2}\iint\limits_{{\mathbb{T}^{1}_{T}}} {\xi}( |{\partial_{tx}{\psi}}|^{2}+|{\partial_{xxx}{\psi}}|^{2})e^{-2s\phi} 
		+
		\displaystyle\frac{1}{s}\iint\limits_{{\mathbb{T}^{1}_{T}}} \frac{1}{{\xi}}(|{\partial_{tt}{\psi}}|^{2}+|\partial_{txx}\psi|^{2}+|{\partial_{xxxx}{\psi}}|^{2})e^{-2s\phi}	
		\\
		&+
		\iint\limits_{{{Q^{ex}_{T}}}}e^{-2s\phi}(\frac{1}{s \xi}(|\nabla q|^{2}+|\nabla \partial_t q|^2 + |\nabla \partial_x q|^2) +s\lambda^{2}{\xi} (| q|^{2} + |\partial_t q|^2 + |\partial_x q|^2))
		\\
		&+ 
		\lambda\iint\limits_{{\mathbb{T}^{1}_{T}}}e^{-2s\phi}( |\partial_t q|^{2} + |\partial_x q|^2)+
		\frac{1}{s\lambda}\iint\limits_{{{Q^{ex}_{T}}}} \frac{1}{\xi}(| \sigma |^{2} + |\nabla\sigma|^2 + |\partial_t \sigma|^2) e^{-2s\phi}
		\\
		& +\frac{1}{s^{2}\lambda^{2}}\sup_{(0,T)}\left(\int\limits_{\mathbb{T}_{L}\times(0,1)}\frac{1}{\xi^{3}}(|\sigma|^{2}+|\nabla\sigma|^{2})e^{-2s\phi}\right)\\
		&
		\leq C\iint\limits_{{\mathbb{T}^{1}_{T}}} |\partial_t q + \overline u_1 \partial_x q|^{2}e^{-2s\phi}
		+Cs^{7}\lambda^{8}\iint\limits_{{\omega_1^T}} {\xi}^{7}|{\psi}|^{2}e^{-2s\phi}
		+ 
		C\frac{1}{s^{2}\lambda^{2}}\iint\limits_{{{Q^{ex}_{T}}}}e^{-2s\phi}\frac{1}{\xi^{2}}(|\sigma|^2\\
		& + |\partial_x \sigma |^{2} + |\partial_t \sigma|^2) 
		+C\frac{1}{s \lambda}\iint\limits_{{\mathbb{T}^{1}_{T}}}e^{-2s\phi}\frac{1}{\xi}( |\partial_{tt} \psi|^2 + |\partial_{xt} \psi|^2 + |\partial_{xx} \psi |^{2} + |\partial_t \psi|^2 + |\partial_x \psi|^2) 
		\\
		& \displaystyle
		+Cs\lambda^{2}\iint\limits_{{\omega_{T}}}e^{-2s\phi}{\xi}(|q|^2 + |\partial_x q|^{2}+ |\partial_t q|^2) 
		\\
		&+ 
		 C \frac{1}{s\lambda}\iint\limits_{{\omega_{T}}}\frac{1}{\xi}(| \sigma |^{2} + |\nabla
		 \sigma|^2 ) e^{-2s\phi}
		 + 
		 C 
		 \frac{1}{s\lambda}\iint\limits_{{{Q^{ex}_{T}}}}\frac{1}{\xi} ( |q|^2 + |\nabla q|^2)e^{-2s\phi} .	
		\end{align*}
		Using that $\xi \geq \theta\geq 1$, taking $s$ and $\lambda$ large enough, we can absorb all the terms in the right hand side which are not localized in the observation set, so that we deduce
		\begin{align}
		&\displaystyle s^{7}\lambda^{8}\iint\limits_{{\mathbb{T}^{1}_{T}}} {\xi}^{7}|{\psi}|^{2}e^{-2s\phi}+s^{5}\lambda^{6}\iint\limits_{{\mathbb{T}^{1}_{T}}} {\xi}^{5}|{\partial_{x}{\psi}}|^{2}e^{-2s\phi}
		+s^{3}\lambda^{4}\iint\limits_{{\mathbb{T}^{1}_{T}}} {\xi}^{3} ( |{\partial_{xx}{\psi}}|^{2}+ |{\partial_{t}{\psi}}|^{2}) e^{-2s\phi}\notag
		\\
		&\displaystyle+s\lambda^{2}\iint\limits_{{\mathbb{T}^{1}_{T}}} {\xi}( |{\partial_{tx}{\psi}}|^{2}+|{\partial_{xxx}{\psi}}|^{2})e^{-2s\phi} 
		+
		\displaystyle\frac{1}{s}\iint\limits_{{\mathbb{T}^{1}_{T}}} \frac{1}{{\xi}}(|{\partial_{tt}{\psi}}|^{2}+|\partial_{txx}\psi|^{2}+|{\partial_{xxxx}{\psi}}|^{2})e^{-2s\phi}\notag	
		\\
		&+
		\iint\limits_{{{Q^{ex}_{T}}}}e^{-2s\phi}(\frac{1}{s \xi}(|\nabla \partial_t q|^2 + |\nabla q|^2) +s\lambda^{2}{\xi} (| q|^{2}+|\partial_{t}q|^{2}) )
		+
		\frac{1}{s\lambda}\iint\limits_{{{Q^{ex}_{T}}}} \frac{1}{\xi}(| \sigma |^{2} + |\nabla\sigma|^2 + |\partial_t \sigma|^2) e^{-2s\phi} 
		\notag
		\\
		& +\frac{1}{s^{2}\lambda^{2}}\sup_{(0,T)}\left(\int\limits_{\mathbb{T}_{L}\times(0,1)}\frac{1}{\xi^{3}}(|\sigma|^{2}+|\nabla\sigma|^{2})e^{-2s\phi}\right)\notag\\
		&
		\leq 
		Cs^{7}\lambda^{8}\iint\limits_{{\omega_1^T}} {\xi}^{7}|{\psi}|^{2}e^{-2s\phi}
		+C s\lambda^{2}\iint\limits_{{\omega_{T}}}e^{-2s\phi}{\xi}(|q|^2 + |\partial_x q|^{2}+ |\partial_t q|^2) 
		\label{obs-system-sigma-q-psi}
		\\
		&
		\quad + 
	     C \frac{1}{s\lambda}\iint\limits_{{\omega_{T}}}\frac{1}{\xi}(| \sigma |^{2} + |\nabla
	     \sigma|^2 ) e^{-2s\phi}.
		\notag	
		\end{align}
		\end{samepage}
		
		We now fix $s$ and $\lambda$ such that \eqref{obs-system-sigma-q-psi} holds. We then use the fact that 
		\begin{align*}
		& e^{-2s\phi}\leqslant C\quad \mbox{on}\,\omega_{T}, 
		\qquad 
		\xi^{i}e^{-2s\phi}\leqslant C\quad \mbox{on}\,\,\omega_{T}\cup \omega^{T}_{1} \quad\mbox{for }\,i\in\{-1,1,7\},
		\\
		& 
		\exists c >0, \hbox{ s.t. } 
		e^{-2s\phi} \geq c \quad \mbox{on}\,(\mathbb{T}_{L}\times(0,1))\times(2T_{0},T-2T_{1})
		\\ 
		&\qquad \quad \mbox{ and } \ 
		\xi^i e^{-2s\phi}\geq c \quad \mbox{on}\,(\mathbb{T}_{L}\times\{1\})\times(2T_{0},T-2T_{1}) \quad\mbox{for }\,i\in\{-3,-1,1,3,5,7\}.
		\end{align*}
		%
		to deduce from \eqref{obs-system-sigma-q-psi} that
		\begin{multline}
		\label{obs-sigma-q-psi-after-carls}
		\| \psi \|_{L^2(2T_0, T - 2T_1; H^4(\mathbb{T}_L \times \{1\})) \cap H^2(2T_0, T - 2T_1; L^2(\mathbb{T}_L \times \{1\}))}
		+ 
		\|q\|_{H^1(2T_0, T - 2T_1; H^1(\mathbb{T}_L \times (0,1)))}
		\\
		+ 
		\| \sigma \|_{L^{\infty}(2T_0, T - 2T_1; H^1(\mathbb{T}_L \times (0,1))) \cap H^1(2T_0, T - 2T_1; L^2(\mathbb{T}_L \times (0,1)))}
		\\
		\leq 
		C \| \psi \|_{L^2(\omega_1^T)}
		+ 
		C \| q \|_{L^{2}(0,T;H^{1}(\omega))\cap H^{1}(0,T;L^{2}(\omega))}
		+ 
		C \| \sigma \|_{L^{2}(0,T;H^{1}(\omega))}.
		\end{multline}
		
		The next step consists in proving that there exists a constant $C$ such that
		\begin{multline}
		\label{penobst-2T0}
		\|( \psi(\cdot, 2 T_0), \partial_t \psi(\cdot, 2 T_0)) \|_{H^3(\mathbb{T}_L \times \{1\})) \times H^1(\mathbb{T}_L \times \{1\})}
		+ 
		\|q(\cdot,  2 T_0)\|_{H^2(\mathbb{T}_L \times (0,1))}\\
		+ 
		\| \sigma(\cdot, 2T_0) \|_{H^1(\mathbb{T}_L \times (0,1))}
		\leq 
		C \| \psi \|_{L^2(\omega_1^T)}
		+ 
		C \| q \|_{L^{2}(0,T;H^{1}(\omega))\cap H^{1}(0,T;L^{2}(\omega))}
		+ 
		C \| \sigma \|_{L^{2}(0,T;H^{1}(\omega))}.
		\end{multline}
		From \eqref{obs-sigma-q-psi-after-carls}, we have an estimate on $\psi$ in $L^2(2T_0, T - 2T_1; H^4(\mathbb{T}_L \times \{1\})) \cap H^2(2T_0, T - 2T_1; L^2(\mathbb{T}_L \times \{1\}))$ and thus by interpolation on $\psi \in C^0([2T_0, T - 2T_1]; H^3(\mathbb{T}_L \times \{1\})) \cap C^1([2T_0, T - 2T_1]; H^1(\mathbb{T}_L \times \{1\}))$, so that we obtain 
		\begin{equation}
		\label{obs-psi-2T0}
		\begin{array}{ll}
		&\displaystyle\|( \psi(\cdot, 2 T_0), \partial_t \psi(\cdot, 2 T_0)) \|_{H^3(\mathbb{T}_L \times \{1\})) \times H^1(\mathbb{T}_L \times \{1\})}\\
		&\displaystyle\leq 
		C \| \psi \|_{L^2(\omega_1^T)}
		+ 
		C \| q \|_{L^{2}(0,T;H^{1}(\omega))\cap H^{1}(0,T;L^{2}(\omega))}
		+ 
		C \| \sigma \|_{L^{2}(0,T;H^{1}(\omega))}.
		\end{array}
		\end{equation}
		Then, from \eqref{obs-sigma-q-psi-after-carls}, we also have an estimate on $q$ in $C^0([2T_0, T - 2T_1]; H^1(\mathbb{T}_L \times (0,1)))$, therefore 
		\begin{equation}
		\label{obs-q-T-T1}
		\|q(\cdot,  T - 2 T_1)\|_{H^1(\mathbb{T}_L \times (0,1))}
		\leq 
		C \| \psi \|_{L^2(\omega_1^T)}
		+ 
		C \| q \|_{L^{2}(0,T;H^{1}(\omega))\cap H^{1}(0,T;L^{2}(\omega))}
		+ 
		C \| \sigma \|_{L^{2}(0,T;H^{1}(\omega))}.
		\end{equation}
		Now $q$ satisfies the heat equation \eqref{adjsysq}$_{(2,3,4,5)}$ and has the regularity \eqref{regsgqps}$_{2}$. We thus apply Lemma \ref{parabolicregularization} with $f_{\sigma}=-\frac{P'(\overline{\rho})\overline{\rho}^{2}}{\nu}\sigma,$ 
		\begin{equation}\label{gpsi}
		g_{\psi}=\left\{ \begin{array}{l}
		-\overline{\rho}(\partial_{t}\psi+\overline{u}_{1}\partial_{x}\psi)\quad\mbox{on}\quad \mathbb{T}_{L}\times\{1\},\\
		0 \quad\qquad\qquad\qquad\quad\mbox{on}\quad \mathbb{T}_{L}\times\{0\},
		\end{array}\right.
		\end{equation}
		in the time interval $(2T_{0},T-2T_{1})$ and further use \eqref{obs-sigma-q-psi-after-carls} and \eqref{obs-q-T-T1} to deduce
		\begin{equation}
		\label{obs-q-T0}
		\|q(\cdot,  2T_0)\|_{H^2(\mathbb{T}_L \times (0,1))}
		\leq 
		C \| \psi \|_{L^2(\omega_1^T)}
		+ 
		C \| q \|_{L^{2}(0,T;H^{1}(\omega))\cap H^{1}(0,T;L^{2}(\omega))}
		+ 
		C \| \sigma \|_{L^{2}(0,T;H^{1}(\omega))}.
		\end{equation}
		Finally, in view of the assumptions \eqref{indatadjq} and \eqref{compatonqT} we know from \eqref{regsgqps} that $\sigma\in C^{0}([0,T];H^{1}(\mathbb{T}_{L}\times(0,1))).$ Hence using \eqref{obs-sigma-q-psi-after-carls}, one at once deduce that 
		\begin{equation}
		\label{obs-nabla-sigma-2T0}
		\| \sigma(\cdot, 2T_{0}) \|_{H^{1}(\mathbb{T}_L \times (0,1))}
		\leq 
		C \| \psi \|_{L^2(\omega_1^T)}
		+ 
		C \| q \|_{L^{2}(0,T;H^{1}(\omega))\cap H^{1}(0,T;L^{2}(\omega))}
		+ 
		C \| \sigma \|_{L^{2}(0,T;H^{1}(\omega))}.
		\end{equation}
		The combination of \eqref{obs-psi-2T0}--\eqref{obs-q-T0}--\eqref{obs-nabla-sigma-2T0} then concludes the proof of \eqref{penobst-2T0}. 
		\\
		We now use Lemma \ref{lemconstp2} to deduce \eqref{penobst0}. Since we are dealing with the solution $(\sigma,q,\psi)$ of the system \eqref{adjsysq} in the strong regularity framework \eqref{regsgqps}, compatibility conditions analogous to \eqref{compatonqT} is automatically satisfied at time $2T_{0}.$ Hence using Lemma \ref{lemconstp2} to solve \eqref{adjsysq} starting from the time $2T_0$  and the estimate \eqref{penobst-2T0} we conclude the proof of \eqref{penobst0}.
		\subsection{Proof of Theorem \ref{main3}}\label{Subsec-Proof-Thm-Main3}
		
		For $(\sigma_T, v_T, \psi_T, \psi_T^1)$ as in Theorem \ref{main3}, we start by solving \eqref{adjsysq} with initial datum $(\sigma_T, q_T, \psi_T, \psi_T^1)$, where $q_{T} = \nu \div v_{T} +\overline\rho \sigma_T$. Using Lemma \ref{Lem-Obs-sigma-q} and recalling that $q = \nu \div v +\overline\rho \sigma$, we deduce that 
		\begin{align}
		\| \sigma(\cdot,0) \|_{H^1(\mathbb{T}_L \times (0,1))}
		& + 
		\|\div v(\cdot, 0)\|_{H^1(\mathbb{T}_L \times (0,1))}
		+
		\|( \psi(\cdot, 0), \partial_t \psi(\cdot, 0)) \|_{H^3(\mathbb{T}_L \times \{1\})) \times H^1(\mathbb{T}_L \times \{1\})}
		\notag
		\\
		& \leq 
		C \| \psi \|_{L^2(\omega_1^T)}
		+ 
		C \| q \|_{L^{2}(0,T;H^{1}(\omega))\cap H^{1}(0,T;L^{2}(\omega))}
		+ 
		C \| \sigma \|_{L^{2}(0,T;H^{1}(\omega))}
		\label{obs-psi-divv-sigma}
		\\
		& \leq 
		C \| \psi \|_{L^2(\omega_1^T)}
		+ 
		C \| v \|_{L^2(0,T; H^2(\omega)) \cap H^1(0,T; H^{1}(\omega))}
		+ 
		C \| \sigma \|_{L^{2}(0,T;H^{1}(\omega))}.
		\notag
		\end{align}
		Thus, to obtain the result of Theorem \ref{main3}, it only remains to estimate $v(\cdot,0)$. As we already have an estimate on $\div v(\cdot, 0)$, we first focus on getting an estimate on $\curl v$.
		\\
		One now uses the system \eqref{sysv} to obtain the following set of equations solved by $\mbox{curl}v$:
		\begin{equation}\label{eqcurl}
		\left\{ \begin{array}{lll}
		&-\overline{\rho}(\partial_{t}(\mbox{curl}v)+\overline{u}_{1}\partial_{x}(\mbox{curl}v))-\mu\Delta(\mbox{curl}v)=0\quad&\mbox{in}\quad (\mathbb{T}_{L}\times(0,1))\times(0,T),\\
		&\mbox{curl}v=0\quad&\mbox{on}\quad (\mathbb{T}_{L}\times\{0,1\})\times(0,T),\\
		& \mbox{curl}v(\cdot,T)=\mbox{curl}v_{T}\quad &\mbox{in}\quad \mathbb{T}_{L}\times(0,1).
		\end{array}\right.
		\end{equation}
		Thus, $\curl v$ satisfies a parabolic heat type equation with homogeneous Dirichlet boundary condition. Classical observability estimates for the heat equation (see e.g. \cite{fursikov} or \cite{farnan}) immediately yields
		\begin{equation}
		\label{Obs-curl-v}
		\|\curl v(\cdot, 0)\|_{H^1(\mathbb{T}_L \times (0,1))}
		\leq
		C \| \curl v\|_{L^2(\omega_T)}
		\leq 
		C \| v \|_{L^2(0,T; H^1(\omega))}.
		\end{equation}
		Now we recover $v(\cdot,0)$ by solving the following elliptic problem at times $t=0$:
		\begin{equation}\label{decompl}
		\left\{ \begin{array}{ll}
		\Delta v(\cdot,0)=\nabla\div(v(\cdot,0))+\begin{pmatrix}
		\partial_{z}\\[2.mm]
		-\partial_{x}
		\end{pmatrix}(\curl v(\cdot,0))\quad\mbox{in}\quad & \mathbb{T}_{L}\times(0,1),\\
		v(\cdot, 0)\cdot n= \psi(\cdot, 0) \,& \mbox{on}\,\mathbb{T}_{L}\times\{1\},\\
		v(\cdot, 0) \cdot n=0\,& \mbox{on}\, \mathbb{T}_{L}\times\{0\},\\
		\displaystyle\curl v(\cdot, 0)=0\,&\mbox{on}\, \mathbb{T}_{L}\times\{0,1\}.	\end{array}\right.
		\end{equation}
		One can use standard elliptic regularity to deduce that 
		\begin{equation*}
		\begin{array}{ll}
		\| v (\cdot, 0)\|_{H^2(\mathbb{T}_L \times (0,1))}
		\leq
		C \|\curl v(\cdot, 0)\|_{H^1(\mathbb{T}_L \times (0,1))}  
		&+
		C\|\div v(\cdot, 0)\|_{H^1(\mathbb{T}_L \times (0,1))}\\
		&+
		C \| \psi(\cdot, 0)\|_{H^3(\mathbb{T}_L\times\{1\})}. 
		\end{array}
		\end{equation*}
		With the estimates \eqref{obs-psi-divv-sigma}--\eqref{Obs-curl-v} and this last estimate, we then deduce that 
		\begin{equation}
		\| v (\cdot, 0)\|_{H^1(\mathbb{T}_L \times (0,1))}
		\leq 
		C \| \psi \|_{L^2(\omega_1^T)}
		+ 
		C \| v \|_{L^2(0,T; H^2(\omega)) \cap H^1(0,T; H^{1}(\omega))}
		+ 
		C \| \sigma \|_{L^{2}(0,T;H^{1}(\omega))}.
		\end{equation}
		Together with \eqref{obs-psi-divv-sigma}, this concludes the proof of Theorem \ref{main3}.
	\subsection{Proof of Corollary \ref{uniquecontinuation}}\label{proofucont} The assumption that $(\sigma,v,\psi)=0$ in $(\omega_{T})^{2}\times \omega^{1}_{T}$ at once gives $(\sigma,q,\psi)=0$ in $(\omega_{T})^{2}\times \omega^{1}_{T}$ and hence form \eqref{obs-system-sigma-q-psi} we furnish:
	\begin{equation}\label{ucontsqp}
	\begin{array}{l}
	(\sigma,q,\psi)=0\quad\mbox{in}\quad ((\mathbb{T}_{L}\times(0,1))\times(0,T))^{2}\times ((\mathbb{T}_{L}\times\{1\})\times(0,T)).
	\end{array}
	\end{equation}
	The only task is to prove $v=0$ on $((\mathbb{T}_{L}\times(0,1))\times(0,T)).$ From \eqref{ucontsqp} it is easy to observe that
	$\mbox{div}\,v=0$ in $((\mathbb{T}_{L}\times(0,1))\times(0,T)).$ Then one considers the system \eqref{eqcurl} solved by $\mbox{curl}\,v$ use $\mbox{curl}\,v=0$ in $\omega_{T}$ and classical Carleman estimate for heat type equation from \cite{fursikov} or \cite{farnan} to infer $\mbox{curl}\,v=0$ in $((\mathbb{T}_{L}\times(0,1))\times(0,T))$. Since $v\in C^{0}([0,T];H^{2}(\mathbb{T}_{L}\times(0,1)))$ (follows from \eqref{imregv}) it satisfies a system of the form \eqref{decompl} for each $t\in(0,T)$ but with zero source terms since we already have:
	$$(\mbox{div}\,v,\mbox{curl}\,v,\psi)=0\quad\mbox{in}\quad ((\mathbb{T}_{L}\times(0,1))\times(0,T))^{2}\times ((\mathbb{T}_{L}\times\{1\})\times(0,T)).$$
	Considering this equations component wise and using that $v=0$ in $\omega_{T}$ one proves $v=0$ in $(\mathbb{T}_{L}\times(0,1))\times(0,T).$ This finishes the proof of Corollary \ref{uniquecontinuation}.
		\section{Appendix}\label{appendix}
	\subsection{Proof of Lemma \ref{lemconstp2}:}\label{appendix1}	This section is devoted for the proof of Lemma \ref{lemconstp2}.
	\begin{proof}[Proof of Lemma \ref{lemconstp2}]
		In this step we will prove the existence and regularity result for $(\sigma,q,\psi)$ solving the system \eqref{adjsysq}. In fact, we will prove that under the assumptions \eqref{indatadjq} and \eqref{compatonqT}, the system \eqref{adjsysq} admits a unique solution in the following functional framework \eqref{regsgqps}.
		%
		We will first prove a local in time existence result for the problem \eqref{adjsysq}. Then using the linearity of \eqref{adjsysq} we iterate the time steps in order to show \eqref{regsgqps}. Since the problem \eqref{adjsysq} is posed backward in time by local in time existence, we first work in some time interval of the form $(T-T_{0},T)$ for $T_{0}$ sufficiently small. 
		\\
		Let $0<T_{0}<T.$ We consider the system \eqref{adjsysq}. We are going to define a suitable map whose fixed point gives a solution to the system \eqref{adjsysq} in the time interval $(T-T_{0},T)$. We define
		\begin{equation}\label{ashsps}
		\begin{split}
		& \mathbb{H}^{T_{0}}_{1}=(L^{2}(T-T_{0},T;H^{1}(\mathbb{T}_{L}\times(0,1)))\cap H^{1}(T-T_{0},T;L^{2}(\mathbb{T}_{L}\times(0,1))))\\
		&\times (H^{1}(T-T_{0},T;H^{3/2}(\mathbb{T}_{L}\times\{1\}))
		\cap H^{7/4}(T-T_{0},T;L^{2}(\mathbb{T}_{L}\times\{1\}))\\
		&\cap L^{2}(T-T_{0},T;H^{5/2}(\mathbb{T}_{L}\times\{1\}))\cap H^{3/4}(T-T_{0},T;H^{1}(\mathbb{T}_{L}\times\{1\}))),
		\end{split}
		\end{equation}
		and for $(\widehat{\sigma},\widehat{\psi})\in\mathbb{H}^{T_{0}}_{1}$ satisfying the condition
		\begin{equation}\label{compatibilty}
		\begin{split}
		\displaystyle
		\widehat{\sigma}(\cdot,T)=\sigma_{T}\quad\mbox{and}\quad(\widehat{\psi},\partial_{t}\widehat{\psi})(\cdot,T)=(\psi_{T},\psi^{1}_{T}), 
		\end{split}
		\end{equation}
		we solve the system
		\begin{equation}\label{adjsysq2}
		\left\{ \begin{array}{ll}
		\displaystyle
		-\partial_{t}\sigma-\overline{u}_{1}\partial_{x}\sigma+\frac{P'(\overline{\rho})\overline{\rho}}{\nu}\sigma=\frac{P'(\overline{\rho})}{\nu}q\,& \mbox{in}\, (\mathbb{T}_{L}\times(0,1))\times(T-T_{0},T),
		\vspace{1.mm}\\
		\displaystyle
		-(\partial_{t}q+\overline{u}_{1}\partial_{x}q)-\frac{\nu}{\overline{\rho}}\Delta q-\frac{P'(\overline{\rho})\overline{\rho}}{\nu}q=-\frac{P'(\overline{\rho})\overline{\rho}^{2}}{\nu}\widehat{\sigma}\, &\mbox{in} \, (\mathbb{T}_{L}\times(0,1))\times(T-T_{0},T),
		\vspace{1.mm}\\
		\partial_{z}q=-\overline{\rho}(\partial_{t}\widehat{\psi}+\overline{u}_{1}\partial_{x}\widehat{\psi})\,& \mbox{on}\,(\mathbb{T}_{L}\times\{1\})\times(T-T_{0},T),\\[1.mm]
		\partial_{z}q=0\,& \mbox{on}\, (\mathbb{T}_{L}\times\{0\})\times(T-T_{0},T),
		\vspace{1.mm}\\
		q(\cdot,T)=q_{T}\,& \mbox{in} \, \mathbb{T}_{L}\times(0,1),
		\vspace{1.mm}\\
		{\sigma}(\cdot,T)=\sigma_{T}\,& \mbox{in}\, \mathbb{T}_{L}\times(0,1),
		\vspace{1.mm}\\
		\partial_{tt}\psi+\partial_{txx}\psi+\partial_{xxxx}\psi=(\partial_{t}+\overline{u}_{1}\partial_{x})q\,& \mbox{on}\, (\mathbb{T}_{L}\times\{1\})\times(T-T_{0},T),
		\vspace{1.mm}\\
		\psi(T)=\psi_{T}\quad \mbox{and}\quad\partial_{t}\psi(T)=\psi_{T}^{1}\,&\mbox{in}\, \mathbb{T}_{L}\times\{1\},
		\end{array}\right.
		\end{equation}
		This defines the following map:
		\begin{equation}\label{deffxmap}
		\begin{split}
		\mathcal{L}_{T_0}: (\widehat{\sigma},\widehat{\psi}) \in \mathbb{H}_{1}^{T_{0}} \longmapsto (\sigma,\psi).
		\end{split}
		\end{equation}
		We will show that $\mathcal{L}_{T_0}$ maps $\mathbb{H}_{1}^{T_{0}}$ into itself and is a contraction there. Observe that a fixed point of the map infers a solution to the system \eqref{adjsysq} in the time interval $(T-T_{0},T)$. In the sequel we will show that the map $\mathcal{L}_{T_0}$ admits a fixed point in $\mathbb{H}_{1}^{T_{0}},$ for $T_{0}$ sufficiently small.
		\\
		\subsection{ $\mathcal{L}_{T_0}$ maps $\mathbb{H}_{1}^{T_{0}}$ to itself}.
		In that direction we first claim that there exists a positive constant $C$ such that
		\begin{equation}\label{oriestq}
		\begin{split}
		&\|q\|_{L^{2}(T-T_{0},T;H^{3}(\mathbb{T}_{L}\times(0,1)))\cap H^{3/2}(T-T_{0},T;L^{2}(\mathbb{T}_{L}\times(0,1)))}\\
		&\leqslant C(\|(\partial_{t}\widehat{\psi}+\overline{u}_{1}\partial_{x}\widehat{\psi})\|_{L^{2}(T-T_{0},T;H^{3/2}(\mathbb{T}_{L}\times\{1\}))\cap H^{3/4}(T-T_{0},T;L^{2}(\mathbb{T}_{L}\times\{1\}))}\\
		&+\|\widehat{\sigma}\|_{L^{2}(T-T_{0},T;H^{1}(\mathbb{T}_{L}\times(0,1)))\cap H^{1}(T-T_{0},T;L^{2}(\mathbb{T}_{L}\times(0,1)))}+\|q_{T}\|_{H^{2}(\mathbb{T}_{L}\times(0,1))}))
		\end{split}
		\end{equation}
		To begin with, we will just use the following regularities for the non-homogeneous source term, boundary data and the initial condition
		\begin{equation}\label{dedubdreg}
		\left\{ \begin{array}{ll}
		&\widehat{\sigma}\in L^{2}(T-T_{0},T;L^{2}(\mathbb{T}_{L}\times(0,1))),\\
		&-\overline{\rho}(\partial_{t}\widehat{\psi}+\overline{u}_{1}\partial_{x}\widehat{\psi})\in L^{2}(T-T_{0},T;H^{1/2}(\mathbb{T}_{L}\times\{1\}))\\
		&\quad\qquad\qquad\qquad\qquad\qquad\cap H^{1/4}(T-T_{0},T;L^{2}(\mathbb{T}_{L}\times\{1\})),\\
		& q_T \in H^{1}(\mathbb{T}_{L}\times(0,1)).
		\end{array}\right.
		\end{equation}
		Hence using the regularities \eqref{dedubdreg} one can apply \cite[Theorem 5.3, p. 32]{liomag2chp3} to solve \eqref{adjsysq2}$_{2}$-\eqref{adjsysq2}$_{5}$ in the following functional framework
		\begin{equation}\label{regq}
		\begin{split}
		q\in L^{2}(T-T_{0},T;H^{2}(\mathbb{T}_{L}\times(0,1)))\cap H^{1}(T-T_{0},T;L^{2}(\mathbb{T}_{L}\times(0,1))).
		\end{split}
		\end{equation}
		Moreover there exists a positive constant $C$ independent of $T_{0}$ such that
		\begin{equation}\label{inregq}
		\begin{array}{ll}
		&\displaystyle\|q\|_{L^{2}(T-T_{0},T;H^{2}(\mathbb{T}_{L}\times(0,1)))\cap H^{1}(T-T_{0},T;L^{2}(\mathbb{T}_{L}\times(0,1)))}\\
		&\displaystyle\leqslant C(\|\widehat{\sigma}\|_{L^{2}(T-T_{0},T;L^{2}(\mathbb{T}_{L}\times(0,1)))}+\|(\nu\mbox{div}\,v_{T}+\overline{\rho}\sigma_{T})\|_{H^{1}(\mathbb{T}_{L}\times(0,1))}\\
		&\displaystyle\qquad+\|(\partial_{t}\widehat{\psi}+\overline{u}_{1}\partial_{x}\widehat{\psi})\|_{L^{2}(T-T_{0},T;H^{1/2}(\mathbb{T}_{L}\times\{1\}))\cap H^{1/4}(T-T_{0},T;L^{2}(\mathbb{T}_{L}\times\{1\}))}).
		\end{array}
		\end{equation}
		Let us explain how we obtain a constant $C$ independent of $T_{0}$ in the inequality \eqref{inregq}. The technique is inspired from \cite{rayvanchp3}. We extend $\widehat{\sigma}$ and $(\partial_{t}\widehat{\psi}+\overline{u}_{1}\partial_{x}\widehat{\psi})$ in $(0,T)$ by defining them zero in the time interval $(0,T-T_{0}).$ The extended functions are also denoted by the same notations $\widehat{\sigma}$ and $(\partial_{t}\widehat{\psi}+\overline{u}_{1}\partial_{x}\widehat{\psi}).$ It is easy to verify that
		\begin{equation}\nonumber
		\begin{split}
		\widehat{\sigma}\in L^{2}(0,T;L^{2}(\mathbb{T}_{L}\times(0,1)))\,\,\mbox{and}\,\, (\partial_{t}\widehat{\psi}+\overline{u}_{1}\partial_{x}\widehat{\psi})\in& L^{2}(0,T;H^{1/2}(\mathbb{T}_{L}\times\{1\}))\\
		&\cap H^{1/4}(0,T;L^{2}(\mathbb{T}_{L}\times\{1\})).
		\end{split}
		\end{equation}
		One then has the following
		\begin{equation}\label{exreg}
		\begin{split}
		\displaystyle
		&\|q\|_{L^{2}(T-T_{0},T;H^{2}(\mathbb{T}_{L}\times(0,1)))\cap H^{1}(T-T_{0},T;L^{2}(\mathbb{T}_{L}\times(0,1)))}\\
		&\leqslant\|q\|_{L^{2}(0,T;H^{2}(\mathbb{T}_{L}\times(0,1)))\cap H^{1}(0,T;L^{2}(\mathbb{T}_{L}\times(0,1)))}\\
		&\leqslant C(\|\widehat{\sigma}\|_{L^{2}(0,T;L^{2}(\mathbb{T}_{L}\times(0,1)))}+\|q_T\|_{H^{1}(\mathbb{T}_{L}\times(0,1))}\\
		&\qquad\qquad+\|(\partial_{t}\widehat{\psi}+\overline{u}_{1}\partial_{x}\widehat{\psi})\|_{L^{2}(0,T;H^{1/2}(\mathbb{T}_{L}\times\{1\}))\cap H^{1/4}(0,T;L^{2}(\mathbb{T}_{L}\times\{1\}))})\\
		& = C (\|\widehat{\sigma}\|_{L^{2}(T-T_{0},T;L^{2}(\mathbb{T}_{L}\times(0,1)))}+\|q_T\|_{H^{1}(\mathbb{T}_{L}\times(0,1))}\\
		&\qquad\qquad+\|(\partial_{t}\widehat{\psi}+\overline{u}_{1}\partial_{x}\widehat{\psi})\|_{L^{2}(T-T_{0},T;H^{1/2}(\mathbb{T}_{L}\times\{1\}))\cap H^{1/4}(T-T_{0},T;L^{2}(\mathbb{T}_{L}\times\{1\}))}),\\
		\end{split}
		\end{equation}
		where the constant $C$ (might depend of $T$) is independent of $T_{0}.$
		\\
		Now to prove \eqref{oriestq} we write the equations \eqref{adjsysq2}$_{2}$-\eqref{adjsysq2}$_{5}$ as follows:
		\begin{equation}\label{wreqqd}
		\left\{ \begin{array}{lll}
		&\displaystyle-\partial_{t}q-\frac{\nu}{\overline{\rho}}\Delta q
		\displaystyle=-\frac{P'(\overline{\rho})\overline{\rho}^{2}}{\nu}\widehat{\sigma}+\overline{u}_{1}\partial_{x}q+\frac{P'(\overline{\rho})\overline{\rho}}{\nu}q : =\mathcal{G}_{\widehat{\sigma},q}\, &\mbox{in} \, (\mathbb{T}_{L}\times(0,1))\times(T-T_{0},T),
		\vspace{1.mm}\\
		&\partial_{z}q=-\overline{\rho}(\partial_{t}\widehat{\psi}+\overline{u}_{1}\partial_{x}\widehat{\psi})\,& \mbox{on}\,(\mathbb{T}_{L}\times\{1\})\times(T-T_{0},T),\\[1.mm]
		&\partial_{z}q=0\,& \mbox{on}\, (\mathbb{T}_{L}\times\{0\})\times(T-T_{0},T),
		\vspace{1.mm}\\
		& q(\cdot,T)=q_{T}=\nu\mbox{div}v_{T}+\overline{\rho}\sigma_{T}\,& \mbox{in} \, \mathbb{T}_{L}\times(0,1),
		\end{array}\right.
		\end{equation}
		In view of \eqref{exreg} and the interpolation $L^{2}(T-T_{0},T;H^{2}(\mathbb{T}_{L}\times(0,1)))\cap H^{1}(T-T_{0},T;L^{2}(\mathbb{T}_{L}\times(0,1)))\hookrightarrow H^{\frac{1}{2}}(T-T_{0},T;H^{1}(\mathbb{T}_{L}\times(0,1)))$ one has the following regularity estimate of $\partial_{x}q$ by interpolation
		\begin{equation}\label{regdxq}
		\begin{split}
		&\|\partial_{x}q\|_{L^{2}(T-T_{0},T;H^{1}(\mathbb{T}_{L}\times(0,1)))\cap H^{1/2}(T-T_{0},T;L^{2}(\mathbb{T}_{L}\times(0,1)))}\\
		&\leqslant C(\|\widehat{\sigma}\|_{L^{2}(T-T_{0},T;L^{2}(\mathbb{T}_{L}\times(0,1)))}	+\|q_T\|_{H^{1}(\mathbb{T}_{L}\times(0,1))}
		\\
		&+\|(\partial_{t}\widehat{\psi}+\overline{u}_{1}\partial_{x}\widehat{\psi})\|_{L^{2}(T-T_{0},T;H^{1/2}(\mathbb{T}_{L}\times\{1\}))\cap H^{1/4}(T-T_{0},T;L^{2}(\mathbb{T}_{L}\times\{1\}))}
		),
		\end{split}
		\end{equation}
		for some positive constant $C$ independent of $T_{0}.$ Indeed, this can be obtained by performing the interpolation process in the time interval $(0,T)$ instead of $(T-T_{0},T).$ 
		Hence the assumption and for $(\widehat{\sigma},\widehat{\psi})\in\mathbb{H}^{T_{0}}_{1}$ ($\mathbb{H}^{T_{0}}_{1}$ is defined in \eqref{ashsps}), the obtained regularity \eqref{exreg} and \eqref{regdxq} implies that 
		\begin{equation}\label{regGsq}
		\begin{array}{l}
		\mathcal{G}_{\widehat{\sigma},q}\in L^{2}(T-T_{0},T;H^{1}(\mathbb{T}_{L}\times(0,1)))\cap H^{1/2}(T-T_{0},T;L^{2}(\mathbb{T}_{L}\times(0,1))) 
		\end{array}
		\end{equation}
		and
		\begin{equation}\label{regGsgq}
		\begin{split}
		&\|\mathcal{G}_{\widehat{\sigma},q}\|_{L^{2}(T-T_{0},T;H^{1}(\mathbb{T}_{L}\times(0,1)))\cap H^{1/2}(T-T_{0},T;L^{2}(\mathbb{T}_{L}\times(0,1)))}\\
		&\leqslant C(\|\widehat{\sigma}\|_{L^{2}(T-T_{0},T;H^{1}(\mathbb{T}_{L}\times(0,1)))\cap H^{1}(T-T_{0},T;L^{2}(\mathbb{T}_{L}\times(0,1))) }\\
		&\qquad+\|(\partial_{t}\widehat{\psi}+\overline{u}_{1}\partial_{x}\widehat{\psi})\|_{L^{2}(T-T_{0},T;H^{1/2}(\mathbb{T}_{L}\times\{1\}))\cap H^{1/4}(T-T_{0},T;L^{2}(\mathbb{T}_{L}\times\{1\}))}\\
		&\qquad+\|q_T\|_{H^{1}(\mathbb{T}_{L}\times(0,1))}),
		\end{split}
		\end{equation}
		for some positive constant $C$ independent of $T_{0}.$\\
		At this stage we will use the following regularities of the boundary and initial datum which follows from \eqref{ashsps} and \eqref{indatadjq}:
		\begin{equation}\label{regnewbin}
		\left\{ \begin{array}{ll}
		&-\overline{\rho}(\partial_{t}\widehat{\psi}+\overline{u}_{1}\partial_{x}\widehat{\psi})\in L^{2}(T-T_{0},T;H^{3/2}(\mathbb{T}_{L}\times\{1\}))\\
		&\qquad\qquad\qquad\qquad\quad\cap H^{3/4}(T-T_{0},T;L^{2}(\mathbb{T}_{L}\times\{1\})),\\
		& q_T \in H^{2}(\mathbb{T}_{L}\times(0,1)).
		\end{array}\right.
		\end{equation}
		Furthermore, in view of \eqref{compatonqT}, \eqref{regGsgq} and \eqref{regnewbin}, we can apply \cite[Theorem 5.3, p. 32]{liomag2chp3} to solve \eqref{wreqqd} in the functional framework
		\begin{equation}\label{regqim}
		\begin{split}
		q\in L^{2}(T-T_{0},T;H^{3}(\mathbb{T}_{L}\times(0,1)))\cap H^{3/2}(T-T_{0},T;L^{2}(\mathbb{T}_{L}\times(0,1))).
		\end{split}
		\end{equation}
		There exists a positive constant $C,$ such that the solution $q$ of the heat equation \eqref{wreqqd} satisfies the following estimate
		\begin{equation}\label{regsolht}
		\begin{split}
		&\|q\|_{L^{2}(T-T_{0},T;H^{3}(\mathbb{T}_{L}\times(0,1)))\cap H^{3/2}(T-T_{0},T;L^{2}(\mathbb{T}_{L}\times(0,1)))}\\
		& \leqslant C(\|\mathcal{G}_{\widehat{\sigma},q}\|_{L^{2}(T-T_{0},T;H^{1}(\mathbb{T}_{L}\times(0,1)))\cap H^{1/2}(T-T_{0},T;L^{2}(\mathbb{T}_{L}\times(0,1)))}\\
		&+\|(\partial_{t}\widehat{\psi}+\overline{u}_{1}\partial_{x}\widehat{\psi})\|_{L^{2}(T-T_{0},T;H^{3/2}(\mathbb{T}_{L}\times\{1\}))\cap H^{3/4}(T-T_{0},T;L^{2}(\mathbb{T}_{L}\times\{1\}))}\\
		&+\|q_T\|_{H^{2}(\mathbb{T}_{L}\times(0,1))})\\
		&\leqslant C(\|\widehat{\sigma}\|_{L^{2}(T-T_{0},T;H^{1}(\mathbb{T}_{L}\times(0,1)))\cap H^{1}(T-T_{0},T;L^{2}(\mathbb{T}_{L}\times(0,1))) }\\
		&+\|(\partial_{t}\widehat{\psi}+\overline{u}_{1}\partial_{x}\widehat{\psi})\|_{L^{2}(T-T_{0},T;H^{3/2}(\mathbb{T}_{L}\times\{1\}))\cap H^{3/4}(T-T_{0},T;L^{2}(\mathbb{T}_{L}\times\{1\}))}\\ &+\|q_T\|_{H^{2}(\mathbb{T}_{L}\times(0,1))}).
		\end{split}
		\end{equation}
	  In the final step of the estimate \eqref{regsolht} we have used \eqref{regGsgq}. Note that the constant $C$ in \eqref{regsolht} might depend on $T_{0}.$\\
		The regularity \eqref{regqim} implies that $q\in C^{0}([T-T_{0},T];H^{1}(\mathbb{T}_{L}\times(0,1))).$ Hence using the regularity assumption $\sigma_{T}\in H^{1}(\mathbb{T}_{L}\times(0,1))$, we obtain the following by solving \eqref{adjsysq2}$_{1}$ and \eqref{adjsysq2}$_{6},$
		\begin{multline}\label{spacesgma}
		\sigma \in \mathbb{H}^{T_{0}}_{2},
		\\
		\text{ where we have set } H^{T_0}_2 = C^{0}([T-T_{0},T];H^{1}(\mathbb{T}_{L}\times(0,1)))\cap C^{1}([T-T_{0},T];L^{2}(\mathbb{T}_{L}\times(0,1))).
		\end{multline}
		Moreover, we have the estimate
		\begin{equation}\label{estsgma}
		\begin{split}
		\displaystyle
		\|\sigma\|_{\mathbb{H}^{T_{0}}_{2}}\leqslant C(\|q\|_{L^{\infty}(T-T_{0},T;H^{1}(\mathbb{T}_{L}\times(0,1)))}+\|\sigma_{T}\|_{H^{1}(\mathbb{T}_{L}\times(0,1))}),
		\end{split}
		\end{equation}
		for some positive constant $C$ independent of $T_{0}.$ For the proofs of \eqref{spacesgma} and \eqref{estsgma} one can follow the arguments used in proving \cite[Lemma 2.4]{vallizakchp3}.\\
		Now one considers the equations \eqref{adjsysq2}$_{7}$-\eqref{adjsysq2}$_{8}.$ Using standard trace theorem and \eqref{regqim} one in particular has 
		\begin{equation}\label{qtrace}
		\begin{split}
		(\partial_{t}+\overline{u}_{1}\partial_{x})q\mid_{\mathbb{T}_{L}\times\{1\}}\in L^{2}(T-T_{0},T;L^{2}(\mathbb{T}_{L}\times\{1\})).
		\end{split}
		\end{equation}
		Now the regularity \eqref{qtrace} and the assumption \eqref{indatadjq} on $(\psi_{T},\psi^{1}_{T})$ furnish the following regularities for $\psi:$
		\begin{equation}\label{regpsi}
		\begin{split}
		\psi\in & L^{2}(T-T_{0},T;H^{4}(\mathbb{T}_{L}\times\{1\}))\cap H^{1}(T-T_{0},T;H^{2}(\mathbb{T}_{L}\times\{1\}))\\
		&\cap H^{2}(T-T_{0},T;L^{2}(\mathbb{T}_{L}\times\{1\})).
		\end{split}
		\end{equation}
		The above regularity result for $\psi$ is a consequence of the fact that the corresponding damped beam operator is the generator of an analytic semigroup. This result can be found in \cite{chenchp3} (see also \cite{raymondbeamchp3}). Using standard interpolation results it is not hard to observe from \eqref{regpsi} that
		\begin{multline}\label{regpsi2}
		\psi\in\mathbb{H}^{T_{0}}_{3}, 
		\\
		\hbox{ where we have set } \mathbb{H}^{T_{0}}_{3}=  H^{5/4}(T-T_{0},T;H^{3/2}(\mathbb{T}_{L}\times\{1\}))\cap H^{1}(T-T_{0},T;H^{2}((\mathbb{T}_{L}\times\{1\})))
		\\
		\cap H^{2}(T-T_{0},T;L^{2}(\mathbb{T}_{L}\times\{1\}))\cap H^{3/4}(T-T_{0},T;H^{5/2}((\mathbb{T}_{L}\times\{1\}))).
		\end{multline}
		Furthermore, one has the following 
		\begin{align}\label{estpsi}
		&\|\psi\|_{\mathbb{H}^{T_{0}}_{3}}\notag\\
		&\leqslant C(\|(\partial_{t}+\overline{u}_{1}\partial_{x})q\|_{L^{2}(T-T_{0},T;L^{2}(\mathbb{T}_{L}\times\{1\}))}+\|(\psi_{T},\psi^{1}_{T})\|_{H^{3}(\mathbb{T}_{L}\times\{1\})\times H^{1}(\mathbb{T}_{L}\times\{1\})}),
		\end{align}
		for some positive constant $C$ independent of $T_{0}.$ One can obtain a constant $C$ independent of $T_{0}$ in the inequality \eqref{estpsi} by defining $(\partial_{t}+\overline{u}_{1}\partial_{x})q$ equal zero in the times interval $(0,T-T_{0})$ and following the line of arguments already used in \eqref{exreg}.\\
		Hence from \eqref{estsgma} and \eqref{estpsi} one obtains that 
		\begin{equation}\label{comsgpsi}
		\begin{array}{ll}
		(\sigma,\psi)\in \mathbb{H}^{T_{0}}_{2}\times \mathbb{H}^{T_{0}}_{3}.
		\end{array}
		\end{equation}
		So far we have observed that for $0<T_{0}<T,$ $\mathcal{L}_{T_0}$ (defined in \eqref{deffxmap}) maps $\mathbb{H}^{T_{0}}_{1}$ to $\mathbb{H}^{T_{0}}_{2}\times\mathbb{H}^{T_{0}}_{3}$, which is obviously a subset of $\mathbb{H}^{T_{0}}_{1}.$
		 \subsection{Choice of $T_{0}$ small enough such that the map $\mathcal{L}_{T_0}$ admits a fixed point in the space $\mathbb{H}_{1}^{T_{0}}$} 
		
		Let us compare the space $\mathbb{H}_{1}^{T_{0}}$ with $\mathbb{H}_{2}^{T_{0}}\times\mathbb{H}_{3}^{T_{0}}$ to observe that there exists a constant $s>0$ such that
		\begin{equation}\label{precontraction}
		\begin{split}
		\|(\cdot,\cdot)\|_{\mathbb{H}_{1}^{T_{0}}}\leqslant CT_0^{s}\|(\cdot,\cdot)\|_{\mathbb{H}_{2}^{T_{0}}\times\mathbb{H}_{3}^{T_{0}}},
		\end{split}
		\end{equation}
		for some positive constant $C$ independent of $T_{0}.$ Now let 
		$(\widehat{\sigma}_{1},\widehat{\psi}_{1})\in \mathbb{H}_{1}^{T_{0}},$ $(\widehat{\sigma}_{2},\widehat{\psi}_{2})\in \mathbb{H}_{1}^{T_{0}}$ and both the pairs satisfy \eqref{compatibilty}. Let the triplets $(\sigma_{1},q_{1},\psi_{1})$ and $(\sigma_{2},q_{2},\psi_{2})$ are solutions to \eqref{adjsysq2} corresponding to $(\widehat{\sigma}_{1},\widehat{\psi}_{1})$ and $(\widehat{\sigma}_{2},\widehat{\psi}_{2})$ respectively. This implies $(\sigma_{1},\psi_{1}) = \mathcal{L}_{T_0} (\widehat{\sigma}_{1},\widehat{\psi}_{1})$ and $(\sigma_{2},\psi_{2}) = \mathcal{L}_{T_0} (\widehat{\sigma}_{2},\widehat{\psi}_{2})$.\\
		Considering the difference of the two linear systems solved by $(\sigma_{1},q_{1},\psi_{1})$ and $(\sigma_{2},q_{2},\psi_{2}),$ one obtains that $(\sigma_{d},q_{d},\psi_{d})=(\sigma_{1}-\sigma_{2},q_{1}-q_{2},\psi_{1}-\psi_{2})$ solves 
		\begin{equation}\label{adjsysdifference}
		\left\{ \begin{array}{ll}
		\displaystyle
		-\partial_{t}\sigma_{d}-\overline{u}_{1}\partial_{x}\sigma_{d}+\frac{P'(\overline{\rho})\overline{\rho}}{\nu}\sigma_{d}=\frac{P'(\overline{\rho})}{\nu}q_{d}\,& \mbox{in}\, (\mathbb{T}_{L}\times(0,1))\times(T-T_{0},T),
		\vspace{1.mm}\\
		-(\partial_{t}q_{d}+\overline{u}_{1}\partial_{x}q_{d})-\frac{\nu}{\overline{\rho}}\Delta q_{d}-\frac{P'(\overline{\rho})\overline{\rho}}{\nu}q_{d}=-\frac{P'(\overline{\rho})\overline{\rho}^{2}}{\nu}(\widehat{\sigma}_{1}-\widehat{\sigma}_{2})\, &\mbox{in} \, (\mathbb{T}_{L}\times(0,1))\times(T-T_{0},T),
		\vspace{1.mm}\\
		\partial_{z}q_{d}=-\overline{\rho}(\partial_{t}(\widehat{\psi}_{1}-\widehat{\psi}_{2})+\overline{u}_{1}\partial_{x}(\widehat{\psi}_{1}-\widehat{\psi}_{2}))\,& \mbox{on}\,(\mathbb{T}_{L}\times\{1\})\times(T-T_{0},T),\\[1.mm]
		\partial_{z}q=0\,& \mbox{on}\, (\mathbb{T}_{L}\times\{0\})\times(T-T_{0},T),
		\vspace{1.mm}\\
		q_{d}(\cdot,T)=0\,& \mbox{in} \, \mathbb{T}_{L}\times(0,1),
		\vspace{1.mm}\\
		{\sigma}_{d}(\cdot,T)=0\,& \mbox{in}\, \mathbb{T}_{L}\times(0,1),
		\vspace{1.mm}\\
		\partial_{tt}\psi_{d}+\partial_{txx}\psi_{d}+\partial_{xxxx}\psi_{d}=(\partial_{t}+\overline{u}_{1}\partial_{x})q_{d}\,& \mbox{on}\, (\mathbb{T}_{L}\times\{1\})\times(T-T_{0},T),
		\vspace{1.mm}\\
		\psi_{d}(T)=0\quad \mbox{and}\quad\partial_{t}\psi_{d}(T)=0\,&\mbox{in}\, \mathbb{T}_{L}\times\{1\},
		\end{array}\right.
		\end{equation}  
		\\
		The goal now is to show the following:
		\begin{equation}\label{contraction}
		\begin{array}{ll}
		\displaystyle\|\mathcal{L}_{T_{0}}(\widehat{\sigma}_{1},\widehat{\psi}_{1})-\mathcal{L}_{T_{0}}(\widehat{\sigma}_{2},\widehat{\psi}_{2})\|_{\mathbb{H}^{T_{0}}_{1}}=\|(\sigma_{1}-\sigma_{2},\psi_{1}-\psi_{2})\|_{\mathbb{H}_{1}^{T_{0}}}&\displaystyle=\|(\sigma_{d},\psi_{d})\|_{\mathbb{H}^{T_{0}}_{1}}\\ 
		&\displaystyle\leqslant CT_0^{s}\|(\widehat{\sigma}_{1}-\widehat{\sigma}_{2},\widehat{\psi}-\widehat{\psi}_{2})\|_{\mathbb{H}_{1}^{T_{0}}},
		\end{array}
		\end{equation}
		for some positive constant $C$ independent of $T_{0}.$\\
		One recalls that during the analysis of the system \eqref{adjsysq2}, we obtained \eqref{oriestq}-\eqref{estpsi} with the constants in the estimates independent of $T_{0},$ while the only exception was \eqref{regsolht}. Now we will explain how to obtain an estimate for $q_{d}=q_{1}-q_{2},$ analogous to \eqref{regsolht} but with a constant independent of $T_{0}.$ In that direction one first obtains an estimate similar to \eqref{exreg} with $q$ replaced by $q_{d},$ $\widehat{\sigma}$ replaced by $(\widehat{\sigma}_{1}-\widehat{\sigma}_{2}),$ $q_{T}$ by zero and $\widehat{\psi}$ by $(\widehat{\psi}_{1}-\widehat{\psi}_{2}).$ Then from \eqref{adjsysdifference} one computes the following system similar to \eqref{wreqqd}:
			\begin{equation}\label{wreqqdnew}
			\left\{ \begin{array}{lll}
			&\displaystyle-\partial_{t}q_{d}-\frac{\nu}{\overline{\rho}}\Delta q_{d}
			\displaystyle=-\frac{P'(\overline{\rho})\overline{\rho}^{2}}{\nu}(\widehat{\sigma}_{1}-\widehat{\sigma}_{2})+\overline{u}_{1}\partial_{x}q_{d}&\\
			&\displaystyle\qquad\qquad\qquad\qquad+\frac{P'(\overline{\rho})\overline{\rho}}{\nu}q _{d} =\mathcal{G}_{\widehat{\sigma}_{1}-\widehat{\sigma}_{2},q_{d}}\, &\mbox{in} \, (\mathbb{T}_{L}\times(0,1))\times(T-T_{0},T),
			\vspace{1.mm}\\
			&\partial_{z}q_{d}=-\overline{\rho}(\partial_{t}(\widehat{\psi}_{1}-\widehat{\psi}_{2})+\overline{u}_{1}\partial_{x}(\widehat{\psi}_{1}-\widehat{\psi}_{2}))\,& \mbox{on}\,(\mathbb{T}_{L}\times\{1\})\times(T-T_{0},T),\\[1.mm]
			&\partial_{z}q_{d}=0\,& \mbox{on}\, (\mathbb{T}_{L}\times\{0\})\times(T-T_{0},T),
			\vspace{1.mm}\\
			& q_{d}(\cdot,T)=0\,& \mbox{in} \, \mathbb{T}_{L}\times(0,1).
			\end{array}\right.
			\end{equation}
			Similar to \eqref{regGsq} and \eqref{regnewbin}$_{1},$ one can verify that:
			\begin{equation}\label{regularitydiff}
			\left\{ \begin{array}{l}
			\mathcal{G}_{\widehat{\sigma}_{1}-\widehat{\sigma}_{2},q_{d}}\in L^{2}(T-T_{0},T;H^{1}(\mathbb{T}_{L}\times(0,1)))\cap H^{1/2}(T-T_{0},T;L^{2}(\mathbb{T}_{L}\times(0,1))),\\
			\partial_{z}q_{d}\in  L^{2}(T-T_{0},T;H^{3/2}(\mathbb{T}_{L}\times\{0,1\}))\cap H^{3/4}(T-T_{0},T;L^{2}(\mathbb{T}_{L}\times\{0,1\})).
			\end{array}\right.
			\end{equation}
			One further observes that
			\begin{equation}\label{zeroatT}
			\left\{ \begin{array}{l}
			\displaystyle\mathcal{G}_{\widehat{\sigma}_{1}-\widehat{\sigma}_{2},q_{d}}(\cdot,T)=0\,\,\mbox{in}\,\,\mathbb{T}_{L}\times(0,1),\\
			\displaystyle\partial_{z}q_{d}\mid_{\mathbb{T}_{L}\times\{0,1\}}(\cdot,T)=0.
			\end{array}\right.
			\end{equation}
		We will now define extensions of $\mathcal{G}_{\widehat{\sigma}_{1}-\widehat{\sigma}_{2}}$ and the normal trace $\partial_{z}q_{d}$ in $(-T,T).$ We define:
    	\begin{equation}
		\widetilde{\mathcal{G}}_{\widehat{\sigma}_{1}-\widehat{\sigma}_{2},q_{d}}(\cdot,t)=\left\{ \begin{array}{lll}
		&\displaystyle\mathcal{G}_{\widehat{\sigma}_{1}-\widehat{\sigma}_{2},q_{d}}(\cdot,t)&\quad\mbox{in}\quad (T-T_{0},T),\\
		&\displaystyle \mathcal{G}_{\widehat{\sigma}_{1}-\widehat{\sigma}_{2},q_{d}}(\cdot,2(T-T_{0})-t)&\quad\mbox{in}\quad (T-2T_{0},T-T_{0}),\\
		&\displaystyle 0 &\quad\mbox{in}\quad (-T,T-T_{0})
		\end{array}\right.
		\end{equation}
	    In a similar way by reflection one defines $\widetilde{\partial_{z}q_{d}},$ the extension of $\partial_{z}q_{d}$ in $(-T,T).$\\
	    By virtue of \eqref{regularitydiff} and \eqref{zeroatT} it is not hard to check that 
	    \begin{equation}\label{regularitydiffnew}
	    \left\{ \begin{array}{l}
	    \widetilde{\mathcal{G}}_{\widehat{\sigma}_{1}-\widehat{\sigma}_{2},q_{d}}\in L^{2}(-T,T;H^{1}(\mathbb{T}_{L}\times(0,1)))\cap H^{1/2}(-T,T;L^{2}(\mathbb{T}_{L}\times(0,1))),\\
	    \widetilde{\partial_{z}q_{d}}\in  L^{2}(-T,T;H^{3/2}(\mathbb{T}_{L}\times\{0,1\}))\cap H^{3/4}(-T,T;L^{2}(\mathbb{T}_{L}\times\{0,1\})),
	    \end{array}\right.
	    \end{equation}
	    and most importantly
	    \begin{equation}\label{majorant}
	    \left\{ \begin{array}{ll}
	    &\displaystyle \|\widetilde{\mathcal{G}}_{\widehat{\sigma}_{1}-\widehat{\sigma}_{2},q_{d}}\|_{L^{2}(-T,T;H^{1}(\mathbb{T}_{L}\times(0,1)))\cap H^{1/2}(-T,T;L^{2}(\mathbb{T}_{L}\times(0,1)))}\\
	    &\displaystyle\leqslant 2\|{\mathcal{G}}_{\widehat{\sigma}_{1}-\widehat{\sigma}_{2},q_{d}}\|_{L^{2}(T-T_{0},T;H^{1}(\mathbb{T}_{L}\times(0,1)))\cap H^{1/2}(T-T_{0},T;L^{2}(\mathbb{T}_{L}\times(0,1)))},\\[4.mm]
	    &\displaystyle \|\widetilde{\partial_{z}q_{d}}\|_{L^{2}(-T,T;H^{3/2}(\mathbb{T}_{L}\times\{0,1\}))\cap H^{3/4}(-T,T;L^{2}(\mathbb{T}_{L}\times\{0,1\}))}\\
	    &\displaystyle\leqslant 2\|\partial_{z}q_{d}\|_{L^{2}(T-T_{0},T;H^{3/2}(\mathbb{T}_{L}\times\{0,1\}))\cap H^{3/4}(T-T_{0},T;L^{2}(\mathbb{T}_{L}\times\{0,1\}))},
	    \end{array}\right.
	    \end{equation}
	    with the constants in the inequalities \eqref{majorant} independent of $T_{0}.$ One can now solve \eqref{wreqqdnew} in the extended interval $(-T,T)$ and furnish
	    \begin{equation}\label{solvingqdext}
	    \begin{array}{ll}
	    &\displaystyle\|q_{d}\|_{L^{2}(T-T_{0},T;H^{3}(\mathbb{T}_{L}\times(0,1)))\cap H^{3/2}(T-T_{0},T;L^{2}(\mathbb{T}_{L}\times(0,1)))}\\
	    &\displaystyle \leqslant \|q_{d}\|_{L^{2}(-T,T;H^{3}(\mathbb{T}_{L}\times(0,1)))\cap H^{3/2}(-T,T;L^{2}(\mathbb{T}_{L}\times(0,1)))}\\
	    &\displaystyle \leqslant C\left(\|\widetilde{\mathcal{G}}_{\widehat{\sigma}_{1}-\widehat{\sigma}_{2},q_{d}}\|_{L^{2}(-T,T;H^{1}(\mathbb{T}_{L}\times(0,1)))\cap H^{1/2}(-T,T;L^{2}(\mathbb{T}_{L}\times(0,1)))}\right.\\
	    &\displaystyle\left.\qquad\qquad+\|\widetilde{\partial_{z}q_{d}}\|_{L^{2}(-T,T;H^{3/2}(\mathbb{T}_{L}\times\{0,1\}))\cap H^{3/4}(-T,T;L^{2}(\mathbb{T}_{L}\times\{0,1\}))}\right)\\
	    & \leqslant 2C\left( \|{\mathcal{G}}_{\widehat{\sigma}_{1}-\widehat{\sigma}_{2},q_{d}}\|_{L^{2}(T-T_{0},T;H^{1}(\mathbb{T}_{L}\times(0,1)))\cap H^{1/2}(T-T_{0},T;L^{2}(\mathbb{T}_{L}\times(0,1)))}\right.\\
	    &\displaystyle \left.\qquad\qquad+\|\partial_{z}q_{d}\|_{L^{2}(T-T_{0},T;H^{3/2}(\mathbb{T}_{L}\times\{0,1\}))\cap H^{3/4}(T-T_{0},T;L^{2}(\mathbb{T}_{L}\times\{0,1\}))}\right)\\
	    &\leqslant C\left(\|\widehat{\sigma}_{1}-\widehat{\sigma}_{2}\|_{L^{2}(T-T_{0},T;H^{1}(\mathbb{T}_{L}\times(0,1)))\cap H^{1}(T-T_{0},T;L^{2}(\mathbb{T}_{L}\times(0,1))) }\right.\\
	    &\qquad\left.+\|\partial_{t}(\widehat{\psi}_{1}-\widehat{\psi}_{2})+\overline{u}_{1}\partial_{x}(\widehat{\psi}_{1}-\widehat{\psi}_{2})\|_{L^{2}(T-T_{0},T;H^{3/2}(\mathbb{T}_{L}\times\{1\}))\cap H^{3/4}(T-T_{0},T;L^{2}(\mathbb{T}_{L}\times\{1\}))}\right),
	    \end{array}
	    \end{equation}
	    where the fourth step of \eqref{solvingqdext} from the third one follows by using \eqref{majorant}, the final step from the penultimate one uses an estimate similar to \eqref{regGsgq} and \eqref{wreqqdnew}$_{2}$ . We specify that the constants in the inequalities of \eqref{solvingqdext} only depends on the measure of $(-T,T)$ i.e only on $T$ but not on $T_{0}.$ Using interpolation it is evident from \eqref{solvingqdext} that $q_{d}\in C^{0}([T-T_{0},T];H^{1}(\mathbb{T}_{L}\times(0,1))).$ Further the interpolation can be first performed in the extended interval $(-T,T)$ and then take its restriction on $(T-T_{0},T)$ to furnish the following from \eqref{solvingqdext}:
	     \begin{equation}\label{linftyqd}
	     \begin{array}{ll}
	     &\displaystyle\|q_{d}\|_{L^{\infty}(T-T_{0},T;H^{1}(\mathbb{T}_{L}\times(0,1)))}\\
	     &\displaystyle\leqslant C\left(\|\widehat{\sigma}_{1}-\widehat{\sigma}_{2}\|_{L^{2}(T-T_{0},T;H^{1}(\mathbb{T}_{L}\times(0,1)))\cap H^{1}(T-T_{0},T;L^{2}(\mathbb{T}_{L}\times(0,1))) }\right.\\
	     &\displaystyle\qquad\left.+\|\partial_{t}(\widehat{\psi}_{1}-\widehat{\psi}_{2})+\overline{u}_{1}\partial_{x}(\widehat{\psi}_{1}-\widehat{\psi}_{2})\|_{L^{2}(T-T_{0},T;H^{3/2}(\mathbb{T}_{L}\times\{1\}))\cap H^{3/4}(T-T_{0},T;L^{2}(\mathbb{T}_{L}\times\{1\}))}\right),
	     \end{array}
	     \end{equation}
	    for some positive constant $C$ independent of $T_{0}.$ In a similar spirit we obtain the following trace estimate from \eqref{solvingqdext}:
	    \begin{equation}\label{traceqd}
	    \begin{array}{ll}
	    &\displaystyle\|(\partial_{t}+\overline{u}_{1}\partial_{x})q_{d}\|_{L^{2}(T-T_{0},T;L^{2}(\mathbb{T}_{L}\times\{1\}))}\\
	   &\displaystyle\leqslant C\left(\|\widehat{\sigma}_{1}-\widehat{\sigma}_{2}\|_{L^{2}(T-T_{0},T;H^{1}(\mathbb{T}_{L}\times(0,1)))\cap H^{1}(T-T_{0},T;L^{2}(\mathbb{T}_{L}\times(0,1))) }\right.\\
	    &\displaystyle\qquad\left.+\|\partial_{t}(\widehat{\psi}_{1}-\widehat{\psi}_{2})+\overline{u}_{1}\partial_{x}(\widehat{\psi}_{1}-\widehat{\psi}_{2})\|_{L^{2}(T-T_{0},T;H^{3/2}(\mathbb{T}_{L}\times\{1\}))\cap H^{3/4}(T-T_{0},T;L^{2}(\mathbb{T}_{L}\times\{1\}))}\right),
	    \end{array}
	    \end{equation}
	    for some positive constant $C$ independent of $T_{0}.$\\
	    Similar to \eqref{estsgma} and \eqref{estpsi} one also obtains the following for $\sigma_{d}$ and $\psi_{d}:$
	    \begin{equation}\label{estmatesdpd}
	    \left\{ \begin{array}{ll}
	    &\displaystyle \|\sigma_{d}\|_{\mathbb{H}^{T_{0}}_{2}}\leqslant C\|q_{d}\|_{L^{\infty}(T-T_{0},T;H^{1}(\mathbb{T}_{L}\times(0,1)))},\\[2.mm]
	    &\displaystyle \|\psi_{d}\|_{\mathbb{H}^{T_{0}}_{3}}
	    \leqslant C\|(\partial_{t}+\overline{u}_{1}\partial_{x})q_{d}\|_{L^{2}(T-T_{0},T;L^{2}(\mathbb{T}_{L}\times\{1\}))},
	    \end{array}\right.
	    \end{equation}
	    for some positive constant $C$ independent of $T_{0}.$ Using \eqref{linftyqd} and \eqref{traceqd} into \eqref{estmatesdpd} one concludes that
	    \begin{equation}\label{penultimateestiate}
	    \begin{array}{l}
	    \|(\sigma_{d},\psi_{d})\|_{\mathbb{H}^{T_{0}}_{2}\times \mathbb{H}^{T_{0}}_{3}}\leqslant C\|(\widehat{\sigma}_{1}-\widehat{\sigma}_{2},\widehat{\psi}_{1}-\widehat{\psi}_{2})\|_{\mathbb{H}^{T_{0}}_{1}},
	    \end{array}
	    \end{equation}
	    for some positive constant $C$ independent of $T_{0}.$\\
	    Using \eqref{precontraction} and \eqref{penultimateestiate} one concludes the proof of \eqref{contraction}.\\
		In view of \eqref{contraction}, for $T - T_{0}$ close to $T,$ $i.e$ for $T_{0}$ small enough, the map $\mathcal{L}_{T_0}$ is a contraction from $\mathbb{H}_{1}^{T_{0}}$ to itself. Hence applying the Banach fixed point theorem we obtain that for $T_{0}$ small enough, the map $\mathcal{L}_{T_0}$ admits a unique fixed point $(\widehat{\sigma},\widehat{\psi})$ in $\mathbb{H}_{1}^{T_{0}}$. As $(\widehat{\sigma},\widehat{\psi}) =\mathcal{L}_{T_0} (\widehat{\sigma},\widehat{\psi})$, we also have that $(\widehat{\sigma},\widehat{\psi})$ belongs to $\mathbb{H}_2^{T_0} \times \mathbb{H}_3^{T_0}$ and the following regularities coming from \eqref{regqim} and \eqref{regpsi}:
		\begin{equation}\label{regsgqps*}
		\left\{ \begin{array}{ll}
		&\sigma\in C^{0}([T-T_{0},T];H^{1}(\mathbb{T}_{L}\times(0,1)))\cap C^{1}([T-T_{0},T];L^{2}(\mathbb{T}_{L}\times(0,1))),\\
		& q\in L^{2}(T-T_{0},T;H^{3}(\mathbb{T}_{L}\times(0,1)))\cap H^{3/2}(T-T_{0},T;L^{2}(\mathbb{T}_{L}\times(0,1))),\\
		& \psi\in L^{2}(T-T_{0},T;H^{4}(\mathbb{T}_{L}\times\{1\}))\cap H^{1}(T-T_{0},T;H^{2}(\mathbb{T}_{L}\times\{1\}))\\
		&\qquad\quad\cap H^{2}(T-T_{0},T;L^{2}(\mathbb{T}_{L}\times\{1\})),
		\end{array}\right.
		\end{equation}
		provided $T_{0}$ is sufficiently small. Further \eqref{regsgqps*} infers that 
		\begin{equation}\label{regsgqps**}
		\left\{ \begin{array}{ll}
		& \sigma\in C^{0}([T-T_{0},T];H^{1}(\mathbb{T}_{L}\times(0,1))]),\,\,q\in C^{0}([T-T_{0},T];H^{2}(\mathbb{T}_{L}\times(0,1)))\\
		& \psi\in  C^{0}([T-T_{0},T];H^{3}(\mathbb{T}_{L}\times\{1\}))
		\cap C^{1}([T-T_{0},T];H^{1}(\mathbb{T}_{L}\times(0,1))).
		\end{array}\right.
		\end{equation}
		The continuities \eqref{regsgqps**} in time and the system \eqref{adjsysq} can be used to check the following compatibilities at time $t=T-T_{0}:$
		\begin{equation}\label{compatTT0}
		\begin{split}
		&(i)\,\partial_{z}q(\cdot,T-T_{0})=-\overline{\rho}(\partial_{t}\psi+\overline{u}_{1}\partial_{x}\psi)(\cdot,T-T_{0})\,\,\mbox{on}\,\,\mathbb{T}_{L}\times\{1\},\\
		&(ii)\,\partial_{z}q(\cdot,T-T_{0})=0\,\,\mbox{on}\,\,\mathbb{T}_{L}\times\{0\}.
		\end{split}
		\end{equation}
		Further one recalls that in proving \eqref{regsgqps*} we did no assumption on the size of the initial datum. In view of \eqref{regsgqps**}, \eqref{compatTT0} and using that the linearity of the system \eqref{adjsysq} the solution $(\sigma,q,\psi)$ can be extended to the time interval $(0,T)$ by iteration in order to prove \eqref{regsgqps}.\\
		This finishes the proof of \eqref{regsgqps} and thus of Lemma \ref{lemconstp2}.
	\end{proof}
		\subsection{A lemma on parabolic regularization}\label{appendix2}
		In this section we prove a result on parabolic regularization for a heat type equation with non homogeneous Neumann boundary condition. This result is in particular used in proving the inequality \eqref{obs-q-T0} from the information \eqref{obs-q-T-T1}.
		\begin{lem}\label{parabolicregularization}
			We recall the notations introduced in \eqref{inshrthnd}. Let
			\begin{equation}\label{fsigmagpsi}
			\left\{ \begin{array}{ll}
		&\displaystyle f_{\sigma}\in L^{2}(0,T;H^{1}(\mathbb{T}_{L}\times(0,1)))\cap H^{\frac{1}{2}}(0,T;L^{2}(\mathbb{T}_{L}\times(0,1))),\\
		&\displaystyle	g_{\psi}\in L^{2}(0,T;H^{\frac{3}{2}}(\mathbb{T}_L\times\{0,1\}))\cap H^{\frac{3}{4}}(0,T;L^{2}(\mathbb{T}_L\times\{0,1\})),
			\end{array}\right.
			\end{equation}
			 $q_{T}\in H^{1}(\mathbb{T}_{L}\times(0,1))$ and the following compatibility is satisfied:
			 \begin{equation}\label{compat}
			 \begin{array}{l}
			 \partial_{z}q_{T}=g_{\psi}(\cdot,T)\quad \mbox{in}\quad \mathbb{T}_{L}\times\{0,1\}.
			 \end{array}
			 \end{equation}
			Then for every $0<\epsilon\leqslant T,$ $q(\cdot,T-\epsilon)\in H^{2}(\mathbb{T}_{L}\times(0,1)),$ where $q$ solves
		\begin{equation}\label{heatequationap}
		\left\{ \begin{array}{ll}
		\displaystyle-(\partial_{t}q+\overline{u}_{1}\partial_{x}q)-\frac{\nu}{\overline{\rho}}\Delta q-\frac{P'(\overline{\rho})\overline{\rho}}{\nu}q
		=f_{\sigma}\, &\mbox{in} \, Q^{ex}_{T},
		\vspace{1.mm}\\
		\partial_{z}q=g_{\psi}\,& \mbox{on}\,\mathbb{T}^{1}_{T}\cup \mathbb{T}^{0}_{T},\\[1.mm]
		q(\cdot,T)=q_{T}\,& \mbox{in} \, \mathbb{T}_{L}\times(0,1),
		\end{array}\right.
		\end{equation} 
		with $P,$ $\overline{\rho},$ $\overline{u}_{1}$ introduced respectively in \eqref{1.2chp3} and \eqref{baru}. Further the following inequality holds:
		\begin{equation}\label{regularC0}
		\begin{array}{ll}
		&\displaystyle\|q(\cdot,T-\epsilon)\|_{H^{2}(\mathbb{T}_{L}\times(0,1))}\\
		&\displaystyle\leqslant C\left(\|q_{T}\|_{H^{1}(\mathbb{T}_{L}\times(0,1))}
		+\|f_{\sigma}\|_{L^{2}(0,T;H^{1}(\mathbb{T}_{L}\times(0,1)))\cap H^{\frac{1}{2}}(0,T;L^{2}(\mathbb{T}_{L}\times(0,1)))}\right.\\
		&\displaystyle\left.\quad+\|g_{\psi}\|_{L^{2}(0,T;H^{\frac{3}{2}}(\mathbb{T}_L\times\{0,1\}))\cap H^{\frac{3}{4}}(0,T;L^{2}(\mathbb{T}_L\times\{0,1\}))}\right),
		\end{array}
		\end{equation}
		for some positive constant $C.$
		\end{lem}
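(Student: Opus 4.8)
The plan is to prove Lemma \ref{parabolicregularization} by a standard parabolic--smoothing argument: a time cutoff localizing away from the terminal time $t=T$, combined with a bootstrap based on the anisotropic maximal--regularity theorem \cite[Theorem 5.3, p. 32]{liomag2chp3} already used in the proof of Lemma \ref{lemconstp2}. The point is that, although $q_{T}$ is only assumed to lie in $H^{1}(\mathbb{T}_{L}\times(0,1))$ (which globally yields only $q\in C^{0}([0,T];H^{1})$), a backward parabolic equation regularizes as one moves away from the final time, so the extra half-derivative in space can be recovered at any $t=T-\epsilon<T$.

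First I would record a baseline regularity. Transferring the lower--order terms in \eqref{heatequationap} to the right-hand side, I write the equation as the pure backward heat equation $-\partial_{t}q-\frac{\nu}{\overline{\rho}}\Delta q=f_{\sigma}+\overline{u}_{1}\partial_{x}q+\frac{P'(\overline{\rho})\overline{\rho}}{\nu}q$, with Neumann datum $g_{\psi}$ and terminal datum $q_{T}$. Using only $f_{\sigma}\in L^{2}(0,T;L^{2})$, $g_{\psi}\in L^{2}(0,T;H^{1/2})\cap H^{1/4}(0,T;L^{2})$ and $q_{T}\in H^{1}$, the theorem \cite[Theorem 5.3, p. 32]{liomag2chp3} gives $q\in L^{2}(0,T;H^{2}(\mathbb{T}_{L}\times(0,1)))\cap H^{1}(0,T;L^{2}(\mathbb{T}_{L}\times(0,1)))$; exactly as in Lemma \ref{lemconstp2}, the first-order transport term is harmless and is absorbed into the source. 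By the usual trace interpolation for anisotropic spaces, this embeds into $C^{0}([0,T];H^{1})$, and moreover $q\in H^{1/2}(0,T;H^{1})\subset L^{2}(0,T;H^{1})\cap H^{1/2}(0,T;L^{2})$.

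The key step is the cutoff. Fixing $\epsilon\in(0,T]$, I choose $\chi\in C^{\infty}([0,T])$ with $\chi\equiv 1$ on $[0,T-\epsilon]$ and $\chi\equiv 0$ on a neighborhood of $T$, and set $w=\chi q$. Then $w$ solves \eqref{heatequationap} with source $\widetilde{f}=\chi f_{\sigma}-\chi' q$, boundary datum $\widetilde{g}=\chi g_{\psi}$, and \emph{zero} terminal datum $w(\cdot,T)=0$. The decisive gains are that $w(\cdot,T)=0\in H^{2}$ and that the relevant compatibility condition reduces to $\partial_{z}w(\cdot,T)=0=\widetilde{g}(\cdot,T)$, which holds trivially since $\chi\equiv 0$ near $T$. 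For the new data: $\chi g_{\psi}\in L^{2}(0,T;H^{3/2})\cap H^{3/4}(0,T;L^{2})$ and $\chi f_{\sigma}\in L^{2}(0,T;H^{1})\cap H^{1/2}(0,T;L^{2})$ by \eqref{fsigmagpsi}, while the baseline regularity $q\in H^{1/2}(0,T;H^{1})$ shows that $\chi' q$ also lies in $L^{2}(0,T;H^{1})\cap H^{1/2}(0,T;L^{2})$. Hence the higher--order version of \cite[Theorem 5.3, p. 32]{liomag2chp3} applies and yields $w\in L^{2}(0,T;H^{3})\cap H^{3/2}(0,T;L^{2})$.

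Finally, interpolation gives the continuous embedding $L^{2}(0,T;H^{3})\cap H^{3/2}(0,T;L^{2})\hookrightarrow C^{0}([0,T];H^{2})$, so that $w\in C^{0}([0,T];H^{2})$; since $w=q$ on $[0,T-\epsilon]$, this proves $q(\cdot,T-\epsilon)\in H^{2}(\mathbb{T}_{L}\times(0,1))$. Tracking the constants through the two applications of the maximal--regularity theorem and through the embedding then produces the estimate \eqref{regularC0} (with a constant that may depend on $\epsilon$ and $T$ through $\|\chi'\|$). I expect the only genuinely technical point to be the verification that the cutoff source $\chi' q$ belongs to $L^{2}(0,T;H^{1})\cap H^{1/2}(0,T;L^{2})$, which is precisely where the baseline regularity together with the anisotropic interpolation is needed; the transport term and the constant zeroth-order coefficient are, as in Lemma \ref{lemconstp2}, lower order and cause no difficulty.
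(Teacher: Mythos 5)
Your proposal is correct and follows essentially the same route as the paper: a first application of the Lions--Magenes maximal regularity theorem to get the baseline $q\in L^{2}(0,T;H^{2})\cap H^{1}(0,T;L^{2})$, then multiplication by a time cutoff vanishing at $t=T$ (the paper uses $(T-t)$ where you use a smooth $\chi$, which is immaterial) to annihilate the terminal datum and the compatibility condition, a second application of the same theorem yielding $L^{2}(0,T;H^{3})\cap H^{3/2}(0,T;L^{2})$, and finally the time-trace embedding into $C^{0}H^{2}$. The one technical point you flag --- that the commutator term $\chi' q$ lies in $L^{2}(0,T;H^{1})\cap H^{1/2}(0,T;L^{2})$ --- is handled exactly as in the paper, via the interpolation $L^{2}(H^{2})\cap H^{1}(L^{2})\hookrightarrow H^{1/2}(H^{1})$.
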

		\begin{proof}
			We first apply \cite[Theorem 5.3, p. 32]{liomag2chp3} (of course here backward in time) to get:
			\begin{equation}\label{regqappendix}
			\begin{array}{ll}
			&\displaystyle \|q\|_{L^{2}(0,T;H^{2}(\mathbb{T}_{L}\times(0,1)))\cap H^{1}(0,T;L^{2}(\mathbb{T}_{L}\times(0,1))))}\\
			&\displaystyle \leqslant C\left(\|q_{T}\|_{H^{1}(\mathbb{T}_{L}\times(0,1))}
			+\|f_{\sigma}\|_{L^{2}(Q^{ex}_{T})}\right.\\
			&\displaystyle\left.\quad+\|g_{\psi}\|_{L^{2}(0,T;H^{\frac{1}{2}}(\mathbb{T}_L\times\{0,1\}))\cap H^{\frac{1}{4}}(0,T;L^{2}(\mathbb{T}_L\times\{0,1\}))}\right),
			\end{array}
			\end{equation}
			for some positive constant $C.$ Now we introduce:
			\begin{equation}\label{defq1}
			\begin{array}{l}
			q_{1}(\cdot,t)=(T-t)q(\cdot,t)\quad\mbox{in}\quad Q^{ex}_{T}.
			\end{array}
			\end{equation}
			One observes that $q_{1}$ solves:
			\begin{equation}\label{heatequationap*}
			\left\{ \begin{array}{ll}
			\displaystyle-(\partial_{t}q_{1}+\overline{u}_{1}\partial_{x}q_{1})-\frac{\nu}{\overline{\rho}}\Delta q_{1}-\frac{P'(\overline{\rho})\overline{\rho}}{\nu}q_{1}
			=(T-t)f_{\sigma}+q\, &\mbox{in} \, Q^{ex}_{T},
			\vspace{1.mm}\\
			\partial_{z}q_{1}=(T-t)g_{\psi}\,& \mbox{on}\,\mathbb{T}^{1}_{T}\cup \mathbb{T}^{0}_{T},\\[1.mm]
			q_{1}(\cdot,T)=0\,& \mbox{in} \, \mathbb{T}_{L}\times(0,1),
			\end{array}\right.
			\end{equation} 
			 In view of \eqref{fsigmagpsi} and \eqref{regqappendix}:
			$$(T-t)f_{\sigma}+q\in L^{2}(0,T;H^{1}(\mathbb{T}_{L}\times(0,1)))\cap H^{\frac{1}{2}}(0,T;L^{2}(\mathbb{T}_{L}\times(0,1))) $$
			and 
			$$(T-t)g_{\psi}\in L^{2}(0,T;H^{\frac{3}{2}}(\mathbb{T}_L\times\{0,1\}))\cap H^{\frac{3}{4}}(0,T;L^{2}(\mathbb{T}_L\times\{0,1\})).$$
			Of course the compatibility $\partial_{z}q_{1}(\cdot,T)=0=(T-T)\partial_{z}g_{\psi}(\cdot,T)$ is satisfied on $\mathbb{T}_{L}\times\{0,1\}.$\\
			Hence once again applying \cite[Theorem 5.3, p. 32]{liomag2chp3}, we furnish that
			\begin{equation}\nonumber
			\begin{array}{l}
			q_{1}\in L^{2}(0,T;H^{3}(\mathbb{T}_{L}\times(0,1)))\cap H^{\frac{3}{2}}(0,T;(\mathbb{T}_{L}\times(0,1))).
			\end{array}
			\end{equation} 
			Consequently using \cite[Theorem 2.1, Section 2.2]{liomag2chp3} in the interval $(T-\epsilon,T)$ one obtains that
			$$q_{1}(\cdot,T-\epsilon)\in H^{2}(\mathbb{T}_{L}\times(0,1)).$$
			Hence the definition \eqref{defq1} clearly implies that $q(\cdot,T-\epsilon)\in H^{2}(\mathbb{T}_{L}\times(0,1)),$ for every $0<\epsilon\leqslant T$ and the estimate \eqref{regularC0} follows. 
			\end{proof}
		\bibliographystyle{plain}

		\end{document}